\newtheorem{theorem}{Theorem}[section]
\newtheorem{lemma}[theorem]{Lemma}
\newtheorem{proposition}[theorem]{Proposition}
\newtheorem{corollary}[theorem]{Corollary}
\numberwithin{equation}{section}
\numberwithin{figure}{section}
\title{\textbf{On the $L^2$ well-posedness and decay estimate of third order Benjamin-Ono equation}}
\author{Lizhe Wan}
\address{Department of Mathematics, University of Wisconsin - Madison}
\curraddr{}
\email{lwan33@wisc.edu}
\thanks{}
\keywords{third order Benjamin-Ono equation, normal form, nonlinear vector fields}
\subjclass[2020]{35B65, 35G20}
\begin{document}

\begin{abstract}
We consider the $L^2$ well-posedness of  third order Benjamin-Ono equation. 
We show that by means of a normal form and a gauge transformation, the equation can be changed into an Airy-type equation. 
A second goal of this work is to establish that the solutions to the nonlinear  third order Benjamin-Ono equation problem exhibit a dispersive decay estimate analogue to the corresponding linear associated problem. The key ingredient is the use of a nonlinear vector field akin to the work in \cite{ifrim2019dispersive, MR3948114}.
\end{abstract}
\maketitle
\tableofcontents

\section{Introduction}
In this article we consider third order Benjamin-Ono equation
\begin{equation}
 \phi_t - \phi_{xxx}+ \frac{3}{4}\phi^2\phi_x+\frac{3}{4}[\phi H\phi_x+H(\phi \phi_x)]_x=0, \qquad \phi(0) = \phi_0 .\label{HBO}
\end{equation}
Here $\phi$ is a real valued function: $\mathbb{R}\times\mathbb{R}\rightarrow \mathbb{R}$, and $H$  is the Hilbert transform on the real line, which is given via a Fourier multiplier with symbol $-i$ sgn$\xi$.
\par As its name suggests, third order Benjamin-Ono equation derives from the Benjamin-Ono equation, and it is part of the so called \textit{Benjamin-Ono hierarchy}
\begin{equation}
  \phi_t + H \phi_{xx} = \phi \phi_x, \qquad \phi(0) = \phi_0. \label{BenjaminOno}
\end{equation}
It was introduced by Benjamin \cite{benjamin1967internal} and Ono\cite{MR398275} in order to describe long internal waves in a two-layer fluid of great depth.
The Benjamin-Ono equation is completely integrable, and, hence possesses many conserved quantities associated to smooth solutions. Here we list first three conserved quantities: 
\begin{align*}
    E_0 &= \int \phi^2 \, dx, \\
    E_1 &= \int \phi H \phi_x- \frac{1}{3}\phi^3 \, dx,\\
    E_2 &= \int \phi_x^2 - \frac{3}{4}\phi^2H \phi_x + \frac{1}{8}\phi^4\, dx.
\end{align*}
More generally, at each nonnegative integer $k$, there exists a conserved energy $E_k$ corresponding at leading order to $\dot{H}^{\frac{k}{2}}$ norm of $\phi$ (see \cite{MR1607987} and \cite{MR550203}).
\par The Benjamin-Ono equation itself is generated by the Hamiltonian $E_1$,
and the symplectic form
\begin{equation*}
    \omega(\psi_1, \psi_2) = \int \psi_1 \partial_x \psi_2\, dx,
\end{equation*}
with the associated map $J = \partial_x$.
In the same fashion, the Hamiltonian $E_2$ and the symplectic form $\omega$ generate the new equation \eqref{HBO}, which is called \textit{third order Benjamin-Ono equation.} 
See \cite{MR3972074} for a detailed survey of the Benjamin-Ono equation and its hierarchy.

\par Equation \eqref{HBO} is also a completely integrable dispersive equation. The group velocity of waves depends on the frequency, which is similar to KdV equation. 
Above conserved quantities such as $E_0$ and $E_2$ of Benjamin-Ono equation are also the conserved for \eqref{HBO}. 
Tools for dispersive partial differential equations like Strichartz estimates can be used for \eqref{HBO}. 
It also shares a lot of similarities with the Benjamin-Ono equation \eqref{BenjaminOno}.
If $\phi(t,x)$ is a solution, so is $\lambda\phi(\lambda^3 t, \lambda x)$, which suggests that the scale invariant Sobolev space associated to this scaling is $\dot{H}^{-\frac{1}{2}}(\mathbb{R})$. 
This scale invariant space is the same as the Benjamin-Ono equation but different from KdV equation.
\par Our first goal of this article is to prove the following result of \eqref{HBO}.
\begin{theorem}
The third order Benjamin-Ono equation \eqref{HBO} is locally well-posed in $L^2(\mathbb{R})$. \label{t:MainTheoremOne}
\end{theorem}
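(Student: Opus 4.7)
My plan follows the roadmap hinted at in the abstract: combine a normal form transformation with a gauge transformation to reduce \eqref{HBO} to a semilinear Airy-type equation, run a fixed-point argument for the reduced equation at $L^2$ regularity, and then invert the two transformations to recover $\phi$. Since $\dot H^{-1/2}$ is the scale-invariant space, $L^2$ is comfortably subcritical; the real enemy is not the Sobolev exponent but the quasilinear character of the quadratic nonlinearity.

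The first step, the normal form, is designed to remove the quadratic term $\tfrac34[\phi H\phi_x+H(\phi\phi_x)]_x$. Computing in frequency variables and symmetrizing, its symbol is $\tfrac{3i}{8}(\xi_1+\xi_2)(|\xi_1|+|\xi_2|+|\xi_1+\xi_2|)$, while the cubic Airy resonance function is $\Omega(\xi_1,\xi_2)=3(\xi_1+\xi_2)\xi_1\xi_2$. Dividing the symbol by $\Omega$ on the non-resonant region yields a bounded bilinear operator $B$ such that $\psi:=\phi+B(\phi,\phi)$ satisfies an Airy-type equation in which the quadratic nonlinearity has been reduced to a low-high resonant remainder of Benjamin-Ono type plus purely cubic terms.

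The gauge step handles that remainder. Modeled on Tao's gauge for Benjamin-Ono, I would set $w=e^{-iP[\phi]}\psi$, with $P[\phi]$ an antiderivative of a low-frequency projection of $\phi$, chosen so that the commutator $[\partial_x^3,e^{-iP[\phi]}]$ exactly cancels the leftover quasilinear contribution at leading order. I expect this step to be the main obstacle: the dispersion is now third order, so commutators produce two derivatives falling on an antiderivative of $\phi$, requiring a delicate frequency decomposition to define $P[\phi]$ at $L^2$ regularity and to ensure that all error terms remain genuinely semilinear and cubic. One must also verify that the composition $\phi\mapsto w$ is locally bi-Lipschitz on a small $L^2$ ball, so that well-posedness transfers back to $\phi$.

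With the reduced equation $w_t-w_{xxx}=\mathcal N(w)$ in hand, the proof concludes with $L^2$ energy estimates together with standard Strichartz estimates for the Airy group $e^{t\partial_x^3}$, which close a contraction in a Strichartz-type space on a time interval controlled by $\|\phi_0\|_{L^2}$. Inverting the gauge and the normal form then produces $\phi\in C([0,T];L^2)$, and uniqueness together with continuous dependence follow from a parallel difference estimate carried out in the common gauge variable.
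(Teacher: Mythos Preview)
Your outline captures the correct heuristic---partial normal form plus gauge to kill the quasilinear quadratic interaction---but the endgame you propose (contraction for $w$, then transfer via a bi-Lipschitz $\phi\mapsto w$) is where the argument breaks. The gauge factor $e^{-iP[\phi]}$ involves an antiderivative of (a low-frequency piece of) $\phi$; small $L^2$ perturbations of $\phi$ at frequency $\sim 0$ produce $O(1)$ changes in the phase $P[\phi]$, so the map $\phi\mapsto w$ is \emph{not} Lipschitz on any $L^2$ ball. This is not a technicality: for Benjamin--Ono--type flows the data-to-solution map is known to fail uniform continuity in $H^s$, which rules out any scheme that would deliver Lipschitz dependence of $\phi$ on $\phi_0$ in $L^2$. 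A contraction for $w$ combined with a bi-Lipschitz gauge would do exactly that. Relatedly, your claim that the normal form symbol divided by $\Omega$ is bounded is already false in the low--high regime $|\xi_1|\ll|\xi_2|$, where it behaves like $|\xi_1|^{-1}$; this is precisely why the paper calls the full quadratic normal form ``unbounded'' and only removes part of it.

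The paper avoids both pitfalls by working paradifferentially and by abandoning contraction altogether. The normal form and the gauge are applied \emph{frequency by frequency}: for each dyadic $k$ one forms $\psi_k^+=(\phi_k^++B_k(\phi,\phi))e^{-i\Phi_{<k}}$ with $\partial_x\Phi=-\tfrac12\phi$, so that only the antiderivative of $\phi_{<k}$ ever appears and the bilinear correction $B_k$ is bounded via commutator structure. This yields an Airy equation for each $\psi_k^+$ with cubic and higher sources, and a bootstrap using Strichartz and bilinear $L^2$ estimates gives frequency-envelope a priori bounds for $\phi$ in $l^2\mathcal S$. Existence, uniqueness and continuous dependence are then obtained not by fixed point but by (i) proving $H^{-1/2}$ well-posedness of the \emph{linearized} equation via the same machinery, which gives only a \emph{weak} Lipschitz bound $\|\phi^{(1)}-\phi^{(2)}\|_{\mathcal S^{-1/2}}\lesssim\|\phi^{(1)}(0)-\phi^{(2)}(0)\|_{H^{-1/2}}$, and (ii) constructing the $L^2$ solution as a limit of regularized solutions, using frequency envelopes to upgrade the $H^{-1/2}$ convergence to $l^2\mathcal S$. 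The loss of one half-derivative in the Lipschitz estimate is exactly the price for the non-uniform continuity of the flow, and it is what your bi-Lipschitz assumption tries to sidestep.
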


Here by locally well-posedness we mean the unique existence of solution and continuous dependence of initial data; this is nothing more than the Hadamard local well-posedness theory. 
Since the $L^2$ norm of the solution is conserved, this also implies  global well-posedness of the equation \eqref{HBO}.
\par The second goal here is to obtain a decay estimate for the positive time $t$. 
\begin{theorem}
Assume the initial data $\phi_0$ for the equation $\eqref{HBO}$ is small in the sense that for any small constant $\delta>0$.
\begin{equation}
    \| \phi_0\|_{\dot{B}^{-\frac{1}{2}+\delta}_{2,\infty}} + \|x \phi_0 \|_{\dot{H}^{\frac{1}{2}}}\leq \epsilon \ll 1 ,\label{SmallnessCondition}
\end{equation}
then for the lifespan $t \ll \epsilon^{-12}$, we  have the dispersive bound for $\phi$:
\begin{equation*}
    t^{\frac{1}{4}}\left \langle x \right \rangle_{t}^{\frac{1}{4}-\delta}|\phi(t,x)|+ t^{\frac{3}{4}}\left\langle x \right \rangle_{t}^{-\frac{1}{4}-\delta}|\phi_x(t,x)|\lesssim \epsilon.
\end{equation*}
Furthermore, in the elliptic region $E = \{x\lesssim -t^{\frac{1}{3}} \}$ we have a better bound
\begin{equation*}
   \left\langle x \right \rangle_{t} |\phi(t,x)| + t^{\frac{1}{2}}\left\langle x \right \rangle_{t}^{\frac{1}{2}}|\phi_x(t,x)|\lesssim \epsilon \log (t^{-\frac{1}{3}}\left\langle x \right \rangle_{t}).
\end{equation*}
Here, $\left\langle x \right \rangle_{t}$ denotes the time-depended Japanese bracket given by $\left\langle x \right \rangle_{t} := (x^2+ t^{\frac{2}{3}})^{\frac{1}{2}}$, which is consistent with Airy scaling. \label{t:MainTheoremTwo}
\end{theorem}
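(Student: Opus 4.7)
\medskip

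\noindent\textbf{Proof plan for Theorem \ref{t:MainTheoremTwo}.}
We follow the nonlinear vector fields method of Ifrim--Tataru \cite{ifrim2019dispersive,MR3948114}, in the form appropriate for Airy scaling. After invoking the normal form and gauge transformation supplied by the proof of Theorem \ref{t:MainTheoremOne}, we may reduce \eqref{HBO} to a renormalized Airy-type equation $(\partial_t-\partial_x^3)w = \mathcal{N}(w)$, where $\mathcal{N}$ is perturbative. The relevant vector field on the linear side is
\[
L := x + 3t\,\partial_x^2,
\]
the Airy analogue of the Galilean/scaling generator; one checks directly that $[\partial_t-\partial_x^3,L]=0$. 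Together with $L^2$ conservation this yields a Klainerman--Sobolev-type pointwise inequality of the schematic form
\[
t^{1/4}\langle x\rangle_t^{1/4-\delta}|w(t,x)| + t^{3/4}\langle x\rangle_t^{-1/4-\delta}|w_x(t,x)| \lesssim \|w\|_{\dot{B}^{-1/2+\delta}_{2,\infty}} + \|Lw\|_{\dot{H}^{1/2}},
\]
which is our target bound on the linear side. By the assumption \eqref{SmallnessCondition}, the right-hand side is $\lesssim \epsilon$ at time zero, since $Lw(0) = xw(0)$ modulo the gauge transformation.

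The core task is then to propagate
\[
\|w\|_{\dot{B}^{-1/2+\delta}_{2,\infty}} + \|Lw\|_{\dot{H}^{1/2}} \lesssim \epsilon
\]
up to times $t\ll \epsilon^{-12}$. The first norm is controlled via $L^2$ conservation and the Besov-refined bounds underlying Theorem \ref{t:MainTheoremOne}. For the second, since $L$ fails to commute with the nonlinearity, we construct a nonlinear vector field
\[
L^{NL} = L + (\text{lower-order, }w\text{-dependent corrections}),
\]
designed so that $L^{NL}w$ itself satisfies an Airy-type equation whose source is cubic-or-higher in $w$, and so that $L^{NL}w(0) = Lw(0)$ modulo errors of size $\mathcal{O}(\epsilon^2)$. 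Coupled with the bootstrap provided by Klainerman--Sobolev, a cubic energy estimate of the schematic form
\[
\frac{d}{dt}\|L^{NL}w\|_{\dot{H}^{1/2}}^2 \lesssim \bigl(\|w\|_{L^\infty} + \|w_x\|_{L^\infty}\bigr)^{\alpha}\|L^{NL}w\|_{\dot{H}^{1/2}}^2
\]
then yields at most slow polynomial growth, closing the bootstrap up to the claimed threshold $t\ll \epsilon^{-12}$; the exponent $12$ is dictated by matching powers of $t$ in the cubic contribution to the energy against the $\epsilon$-scale of the data. Inverting the Klainerman--Sobolev step with $Lw$ replaced by $L^{NL}w$ produces the global pointwise bound asserted in the first part of the theorem.

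The principal obstacle is the construction and energy control of $L^{NL}$ in the presence of the nonlocal quadratic terms $\phi H\phi_x + H(\phi\phi_x)$, since $[H,x]$ is not bounded on the natural spaces and a naive commutation of $L$ with the equation produces unbounded errors. The remedy is a paradifferential decomposition of $\mathcal{N}$ into high-low, low-high, and balanced interactions: the high-low piece is removed by a time-dependent change of variable, in the spirit of the gauge transform from Theorem \ref{t:MainTheoremOne} but lifted to the vector field level, while the balanced pieces are absorbed by symmetric energy identities exploiting the antisymmetry of the symbol $-i\,\mathrm{sgn}\,\xi$ of $H$. The improved bound in the elliptic region $\{x\lesssim -t^{1/3}\}$ is then obtained by combining the propagated energy information with the exponential damping inherent to Airy functions there, via an ODE-along-rays argument which produces the extra logarithmic factor $\log(t^{-1/3}\langle x\rangle_t)$.
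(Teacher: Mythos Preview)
Your proposal has the right skeleton --- vector field $L=x+3t\partial_x^2$, nonlinear modification $L^{NL}$, bootstrap plus Klainerman--Sobolev --- but differs from the paper's argument in several structural ways, and one of these is a genuine gap.

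First, you propose to pass to a global renormalized variable $w$ solving $(\partial_t-\partial_x^3)w=\mathcal N(w)$ via the normal form and gauge of Theorem~\ref{t:MainTheoremOne}. That machinery is \emph{frequency-localized}: one builds $\psi_k^+=\tilde\phi_k^+e^{-i\Phi_{<k}}$ separately for each dyadic block, and there is no single global $w$ with a clean Airy equation. The paper's decay argument therefore does not use the renormalization at all; it works directly with $\phi$ and the explicit nonlinear vector field
\[
L^{NL}\phi = x\phi+3t\phi_{xx}-\tfrac{3t}{4}\phi^3-\tfrac{3t}{4}\bigl[\phi H\phi_x+H(\phi\phi_x)\bigr].
\]

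Second, the mechanism by which $\|L^{NL}\phi\|_{\dot H^{1/2}}$ is propagated is different from and cleaner than what you sketch. Rather than a direct energy estimate on $L^{NL}\phi$ with paradifferential corrections, the paper observes that $z:=\partial_x(L^{NL}\phi)$ is an \emph{exact solution of the linearized equation}~\eqref{LHBO}. This reduces the problem to an almost-conservation statement for the linearized flow in $\dot H^{-1/2}$, proved via a \emph{modified energy} $E=E^{[2]}+E^{[3]}$ with a cubic correction $E^{[3]}$ (a normal form at the level of the energy, not of the unknown). The slow-growth bound $\partial_t E\lesssim M\epsilon\, t^{-11/12}\|y\|_{L^2}^2$ is what, upon integration, produces the threshold $t\ll\epsilon^{-12}$; your ``matching powers'' heuristic does not pin down where the $11/12$ comes from. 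The conservation of the Besov norm is not a byproduct of Theorem~\ref{t:MainTheoremOne} either, but is imported from the integrable structure (Talbut's conservation laws for the Benjamin--Ono hierarchy).

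Finally, the Klainerman--Sobolev step (Proposition~\ref{t:vfBound}) is not a single global inequality but a dyadic-in-$x$ analysis after rescaling to $t=1$: in the hyperbolic region one uses an energy-flux identity in $x$; in the elliptic region one subtracts the leading part $x^{-1}g_{lo}$ and uses the favorable sign of $x$ in $\int xv_1^2$ to get an elliptic estimate --- not an Airy-asymptotics ODE argument. The logarithmic loss comes from the Bernstein summation of the low-frequency piece $x^{-1}g_{lo}$, not from exponential damping.
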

 This result is similar with the one in \cite{ifrim2019dispersive} but in our case the time for which the nonlinear solution disperses as the linear solution is longer, i.e.. $t \ll \epsilon ^{-12}$.

Historically, in \cite{MR1396715}, Feng and Han proved that \eqref{HBO} is locally well-posed in $H^s(\mathbb{R}), s\geq 4$.  
Feng further established the well-posedness of \eqref{HBO} in weighted Sobolev space $H^s_\gamma, s\geq 3, \gamma\in [0,1]$, see \cite{MR1476042}.
Linares, Pilod and Ponce in their paper \cite{MR2737850} considered local well-posedness in $H^s(\mathbb{R}), s\geq 2$ for more general third order type
Benjamin-Ono equations.
Molinet and Pilod in
\cite{MR3005535}  proved global well-posedness in $H^s(\mathbb{R}), s\geq 2$. 
There are also results for \eqref{HBO} on the torus.  Tanaka \cite{MR3992041} showed that a  more general third order type Benjamin-Ono type of equations is well-posed in $H^s(\mathbb{T}), s\geq \frac{5}{2}$. 
Gassot \cite{MR4227053} proved that for any $t\in \mathbb{R}$, the third order Benjamin-Ono flow map continuously extends to $H^s_{r,0}(\mathbb{T})$ if $s\geq 0$, but does not admit a continuous extension to $H^{-s}_{r,0}(\mathbb{T})$ if $0<s< \frac{1}{2}$.
More generally, people even consider higher order Benjamin-Ono equation, such as the fourth order Benjamin-Ono equations, see for instance the work of Tanaka in \cite{MR4199802} for both torus and the real line. 

Our goal in this present work is to provide a clean $L^2$ local well-posedness result for the Cauchy problem \eqref{HBO} on the real line, improving considerably the previous work accomplished by the above listed authors in the analysis of the same problem. We use novel ideas introduced by Ifrim-Tataru, and Ifrim-Koch-Tataru in \cite{MR3948114} and \cite{ifrim2019dispersive} in order to achieve both our results. The difficulties we encountered are however different in nature than in the work cited above, and hence require an adapted methodology. 
 
\par The article is organized as follows.
In Section~\ref{s:Definition}, we first give the definitions and notations that will be needed in the later proofs.
Then we establish the Strichartz and bilinear Strichartz estimates for the linear Airy flow.

In Section~\ref{s:NormalForm}, we perform a partial  normal form method  transformation first introduced by Simon in \cite{MR719852} followed by Shatah \cite{MR803256}.  
The main principle in the normal form method is to apply a quadratic correction to the unknown in order to replace a nonresonant quadratic nonlinearity by a milder cubic nonlinearity.
 Unfortunately this method does not apply directly here, because some terms
in the quadratic correction are unbounded, and so are some of the cubic terms generated by
the correction. 
To bypass this issue, we apply the techniques developed in \cite{ifrim2019dispersive}, which is carried out in two steps:
\begin{enumerate}
\item a partial normal form transformation which is bounded and removes some of the
quadratic nonlinearity,
\item a gauge transform which removes the remaining part of the quadratic nonlinearity in a bounded way.
\end{enumerate}
This will transform \eqref{HBO} into an equation where the quadratic terms have been removed and replaced by cubic perturbative terms.
We then finalize the proof of apriori $L^2$ bound for \eqref{HBO} using a bootstrap argument.

In Section~\ref{s:LinearizedHBO} we show that the linearized third order Benjamin-Ono equation is
well-posed in $L^2$.
In the proof we also use a normal form transformation to eliminate quadratic terms on the right, and replace them by cubic terms. 
The difference with respect to the computation in Section~\ref{s:NormalForm} is that here we leave certain quadratic terms on the right, because their corresponding normal form correction would be too singular.
We proceed to establish the estimates  using a bootstrap argument .

As a consequence of well-posedness of the linearized equation, we get the weak Lipschitz dependence on the initial data for \eqref{HBO}.
In Section~\ref{s:WellPosedness} we use this important property to establish Theorem~\ref{t:MainTheoremOne}.
\par Section~\ref{s:Decay} is devoted to the sketch of Theorem~\ref{t:MainTheoremTwo}, where we outline three main steps in  the same spirit as in the work of \cite{ifrim2019dispersive}.
Here the estimate differs from the corresponding result of \cite{ifrim2019dispersive} by a $\delta$ exponent.
This is because in the result of Proposition \ref{t:BOReference}, $\phi$ cannot reach $B^{-\frac{1}{2}}_{2,\infty}$ regularity.
In Section~\ref{s:AlmostConserved}, we prove that the $\dot{H}^{-\frac{1}{2}}$ norm of the linearized solution $v$ is almost conserved, in which the lifespan of the solution $\epsilon^{-12}$ is much more better than the corresponding lifespan in \cite{ifrim2019dispersive}.
In Section~\ref{s:nonlinear vf} we consider the inequality in three regions, namely the hyperbolic, self-similar and elliptic regions. 
We establish the nonlinear vector field bound in all three regions, which concludes the proof of Theorem~\ref{t:MainTheoremTwo}.

\textbf{Acknowledgments.} The author would like to thank his advisor, Professor Mihaela Ifrim, for introducing him to this research area and for many helpful discussions that lead to this result.

\section{Notations and the linear flow}\label{s:Definition}
\textit{The big $O$ notation. } We use the notation $A\lesssim B$ or $A = O(B)$ to denote the estimate that $|A| \leq CB$, where $C$ is a universal constant. 
If X is a Banach space, we use $O_X(B)$ to denote any element in X with norm $O(B)$; explicitly we say $u = O_X(B)$ if $\|u\|_X \leq CB$. \\

\textit{Hilbert transform.}
The Hilbert transform of a function $f$ on the real line is defined by
\begin{equation*}
    Hf(x) = PV \frac{1}{\pi}\int \frac{f(y)}{x-y}dy,
\end{equation*}
where $PV$ denotes the principle value.
On the Fourier side, an alternative definition of the Hilbert transform is given by
\begin{equation*}
    \widehat{Hf}(\xi) = -i\mbox{sgn}(\xi)\hat{f}(\xi).
\end{equation*}
Here we list some properties of Hilbert transform.
\begin{itemize}
\item[1.]Skew-adjointness:  
\begin{equation}
  \int uH(v) \, dx = -\int vH(u) \, dx, \mbox{ if } u,v \in L^2.  \label{HilbertOne}
\end{equation}
\item[2.] Skew identity property:
\begin{equation}
\int H(u)H(v)\, dx = \int uv \,dx, \mbox{ if } u,v \in L^2. \label{HilbertTwo}
\end{equation}
\item[3.] Convolution Identity: 
\begin{equation}
 H\left((uH(v)+vH(u)\right) = H(u)H(v) - uv, \mbox{ if } u,v \in L^2.    \label{HilbertThree}
\end{equation}

\end{itemize}
An immediate consequence of above properties is that the solution of \eqref{HBO} satisfies the $L^2$ conservation law enumerated earlier.\\

\textit{Littlewood-Paley decomposition.} 
We begin with the Riesz decomposition
\begin{equation*}
    \mathbf{1} = P_{-}+P_{+},
\end{equation*}
where
\begin{equation*}
    \widehat{P_{-}f(\xi)}: = \mathbf{1}_{(-\infty,0)}\hat{f}(\xi), \quad \widehat{P_{+}f(\xi)}: = \mathbf{1}_{[0,\infty)}\hat{f}(\xi).
\end{equation*}
Consider a bump function which is supported on $[-2, 2]$ and equal to 1 on $[-1,1]$, we define the Littlewood-Paley operator $P_{\leq k} = P_{<k+1}$  by
\begin{equation*}
    \widehat{P_{\leq k}f(\xi)}: = \psi(\frac{\xi}{2^k})\hat{f}(\xi)
\end{equation*} 
for $k\geq 0$,
and $P_{>k} : = P_{\geq k-1}:= 1 -P_{\leq k}$.
We also define $P_{k} : = P_{\geq k} -P_{\leq k-1}$.
Note that due to the Minkowski inequality, all the operators $P_k$, $P_{\leq k}$ are bounded on all translation-invariant Banach spaces. 

\par An important feature of the Littlewood-Paley operator is that $P_{\pm}$ commutes with $P_k, P_{<k}$. 
Therefore, for simplicity, we will let $P_k^{\pm} = P_kP_{\pm}$, respective $P_{<k}^\pm := P_{\pm}P_{<k}$.
We also introduce the notations $\phi_k^+: = P_k^+\phi$, and $\phi_k^- = P_k^- \phi$, respectively.
\par Given the Littlewood-Paley operator $P_k$, we further introduce the enlarged projection operator  $\tilde{P}_k$. 
On the frequency side, it equals $1$ in the support of $P_k$, and is supported on a sightly larger set than the support of $P_k$(say $2^{k-2}$).

\par From Plancherel theorem we have the bound
for inhomogeneous Sobolev and Besov spaces
\begin{equation*}
    \|f\|_{H^s_x} \approx \left(\sum_{k =0}^\infty\|P_k f\|^2_{H^s_x}\right)^{\frac{1}{2}}\approx \left(\sum_{k =0}^\infty2^{ks}\|P_k f\|^2_{L^2_x}\right)^{\frac{1}{2}}, \quad \|f\|_{B^s_{2,\infty}} = \sup_k 2^{sk} \|P_k f \|_{L^2},
\end{equation*}
for any $s\in \mathbb{R}$. 
Similar Littlewood-Paley characterizations also hold for homogeneous Sobolev and Besov spaces. 
 
 \par A very useful result of Littlewood-Paley decomposition is the Bernstein's inequality, whose proof can be found on textbook for example \cite{MR2768550}.
 
 \begin{lemma}[Bernstein's inequality]
 For $1\leq p \leq q\leq \infty$ and a function $f$ on $\mathbb{R}^d$, we have
 \begin{align*}
     \|P_k f \|_{L^q(\mathbb{R}^d)} \lesssim_{p,q,d} 2^{k\left(\frac{d}{p}-\frac{d}{q}\right)}\|P_k f \|_{L^p(\mathbb{R}^d)},\\
      \|P_{\leq k} f \|_{L^q(\mathbb{R}^d)} \lesssim_{p,q,d} 2^{k\left(\frac{d}{p}-\frac{d}{q}\right)}\|P_{\leq k} f \|_{L^p(\mathbb{R}^d)}.
 \end{align*}
 \end{lemma}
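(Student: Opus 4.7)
The plan is to reduce the inequality to a straightforward application of Young's convolution inequality after identifying $P_k$ with convolution against a rescaled Schwartz kernel. First I would write $P_k f = \tilde{P}_k P_k f$ where $\tilde{P}_k$ is an enlarged projector whose frequency-side bump equals one on the support of $\psi(\xi/2^k)$. On the physical side this means $P_k f = \tilde{\psi}_k * P_k f$, where $\tilde{\psi}_k(x) = 2^{kd}\tilde{\psi}^{\vee}(2^k x)$ and $\tilde{\psi}^{\vee}$ is Schwartz, so every $L^r$ norm of $\tilde{\psi}^{\vee}$ is finite and independent of $k$.

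Next I would apply Young's inequality $\|g * h\|_{L^q} \leq \|g\|_{L^r}\|h\|_{L^p}$ with $1 + \tfrac{1}{q} = \tfrac{1}{r} + \tfrac{1}{p}$, which is admissible since $1 \leq p \leq q \leq \infty$. A direct change of variables gives
\begin{equation*}
\|\tilde{\psi}_k\|_{L^r(\mathbb{R}^d)} = 2^{kd\left(1 - \frac{1}{r}\right)}\|\tilde{\psi}^{\vee}\|_{L^r(\mathbb{R}^d)} = 2^{kd\left(\frac{1}{p}-\frac{1}{q}\right)}\|\tilde{\psi}^{\vee}\|_{L^r(\mathbb{R}^d)},
\end{equation*}
so combining the two yields the desired estimate $\|P_k f\|_{L^q} \lesssim 2^{kd(1/p - 1/q)} \|P_k f\|_{L^p}$. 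For the low-frequency piece $P_{\leq k}$, I would repeat the same argument, using that the cutoff $\psi(\xi/2^k)$ is itself a rescaled bump, so its inverse Fourier transform is $2^{kd}\psi^{\vee}(2^k \cdot)$ with $\psi^{\vee}$ Schwartz, and the scaling computation is identical.

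The argument is essentially routine and there is no real obstacle; the only modest subtlety is the need to insert an enlarged projector $\tilde{P}_k$ in the high-frequency case in order to keep $P_k f$ (rather than $f$) on the right-hand side, and to verify that the rescaling $\tilde{\psi}^{\vee}(2^k x) \mapsto 2^{kd}\tilde{\psi}^{\vee}(2^k x)$ produces the correct power of $2^k$. Since this is a standard textbook statement, the proof reduces to citing Young's inequality together with this scaling computation.
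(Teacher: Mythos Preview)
Your proposal is correct and is precisely the standard textbook argument. The paper does not supply its own proof of this lemma but simply refers the reader to a textbook, so your Young's inequality plus scaling computation is exactly what one would find there; there is nothing to compare.
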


\textit{Frequency envelopes.}
We say that a sequence $\{ c_k\}_0^\infty \in l^2$ is an $L^2$ frequency envelope for $\phi \in L^2$ if 
\begin{enumerate}
    \item $\sum_{k = 0}^\infty c_k^2 \lesssim 1$ ;
    \item it is slow varying, $c_j/ c_k \leq 2^{\delta|j-k|}$, with $\delta$ a very small universal constant;
    \item it bounds the dyadic norms of $\phi$, namely $\| P_k\phi\|_{L^2}\leq c_k$.
\end{enumerate}
The idea of frequency envelopes dates back from T. Tao\cite{MR2052470}, and is later improved and implemented by D. Tataru \cite{MR1827277}.
Given a frequency envelope $c_k$ we define 
\begin{equation*}
    c_{\leq k} = \left(\sum_{j\leq k}c^2_j\right)^{\frac{1}{2}}, \qquad c_{\geq k} = \left(\sum_{j\geq k}c^2_j\right)^{\frac{1}{2}}.
\end{equation*}
We harmlessly assume that $c_0 \approx 1$. 
Similarly, for $H^s$ functions, we replace the last condition by $\| P_k\phi\|_{L^2}\leq 2^{-ks}c_k$.
\par Later for example in \eqref{InitialCondition} or \eqref{InitialOneHalf}, we consider the small initial data satisfying $\| \phi_0\|_{L^2}\leq \epsilon$.
Using the language of frequency envelopes, we can replace this condition by $\| P_k \phi\|_{L^2}\leq \epsilon c_k$ for each $k\in \mathbb{N}$.
 \\

\textit{Multi-linear expressions.} By $L(\phi_1, \dots , \phi_n)$ we denote a translation-invariant expression of the form
\begin{equation*}
    L(\phi_1, \dots , \phi_n)(x) = \int K(y) \phi_1(x+y_1)\dots\phi_n(x+y_n)\, dy,
\end{equation*}
where $K\in L^1$.
By $L_k$ we denote such multilinear expressions where output is localized at frequency $2^k$. 
An obvious observation is that the multilinear expressions behave well with respect to reiteration, e.g.
\begin{equation*}
    L(L(u,v),w) = L(u,v,w).
\end{equation*}
Multilinear $L$ type expressions can easily be estimated in terms of linear bounds for their entries.
For bilinear $L$ type expressions, we have the H{\"o}lder's type of inequality
\begin{equation*}
    \|L(u_1, u_2) \|_{L^r} \lesssim \| u_1\|_{L^{p_1}}\| u_1\|_{L^{p_2}},   \qquad \frac{1}{p_1}+ \frac{1}{p_2} = \frac{1}{r}.
\end{equation*}
For trilinear $L$ type expressions, the effect of uncorrelated translations may be taken into considerations. 
We define the translation group $\{T_y \}_{y\in \mathbb{R}}$,
\begin{equation*}
    (T_y u)(x) = u(x+y).
\end{equation*}
Then we can also estimate the trilinear form by
\begin{equation*}
\| L(u_1, u_2, u_3)\|_{L^r} \lesssim \|u_1 \|_{L^{p_1}}\sup_{y\in \mathbb{R}}\| u_2 T_y u_3\|_{L^{p_2}}, \qquad \frac{1}{p_1}+ \frac{1}{p_2} = \frac{1}{r}.
\end{equation*}
Or without ambiguity  in a simpler form
\begin{equation*}
    \| L(u_1, u_2, u_3)\|_{L^r} \lesssim \|u_1 \|_{L^{p_1}} \|L(u_2, u_3) \|_{L^{p_2}}.
\end{equation*}
When classifying cubic and higher order terms obtained after implementing a normal form
transformation, we observe that having a commutator structure can simplify our computation
We will repeatedly use the following lemma in normal form analysis for the Littlewood-Paley projection $P_k$:
\begin{lemma}[Leibniz rule for $P_k$, \cite{MR1869874}] 
\begin{equation}
    [P_k , f]g = L( f_x, 2^{-k}g). \label{commutator}
\end{equation}
\end{lemma}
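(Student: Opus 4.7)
The plan is a direct kernel computation. Writing $P_k$ as convolution with the Schwartz kernel $K_k(z) = 2^k K(2^k z)$ (where $\widehat{K}$ is the bump cutting out the dyadic annulus), I would first express the commutator in the standard form
\begin{equation*}
[P_k, f]g(x) \;=\; \int K_k(z)\bigl(f(x-z)-f(x)\bigr)\, g(x-z)\, dz.
\end{equation*}
The whole point is to trade the difference $f(x-z)-f(x)$ for a derivative of $f$ weighted by $z$, which is exactly what supplies the desired $2^{-k}$ factor after rescaling.

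Next I would apply the fundamental theorem of calculus,
\begin{equation*}
f(x-z)-f(x) \;=\; -z\int_0^1 f'(x-sz)\, ds,
\end{equation*}
and substitute this into the commutator to obtain
\begin{equation*}
[P_k, f]g(x) \;=\; -\int_0^1\!\!\int z K_k(z)\, f'(x-sz)\, g(x-z)\, dz\, ds.
\end{equation*}
Rescaling $z = 2^{-k} u$, the factor $z K_k(z)\, dz$ becomes $2^{-k} \cdot u K(u)\, du$, so I can extract exactly one power of $2^{-k}$:
\begin{equation*}
[P_k, f]g(x) \;=\; -2^{-k}\int_0^1\!\!\int u K(u)\, f'\bigl(x - s\, 2^{-k}u\bigr)\, g\bigl(x - 2^{-k}u\bigr)\, du\, ds.
\end{equation*}

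At this point I would change variables $(s,u) \mapsto (y_1, y_2) = (-s\, 2^{-k}u,\, -2^{-k}u)$ to put the expression into the translation-invariant multilinear form of the definition of $L$. The Jacobian is $2^{-2k} u$, and the resulting kernel is supported on $\{y_1/y_2 \in [0,1]\}$ with pointwise bound $\lesssim 2^{2k}|K(-2^k y_2)|$; a direct computation shows its $L^1$ norm equals $\int |u||K(u)|\, du$, which is finite since $K$ is Schwartz. Thus the expression is precisely of the form $L(f_x,\, 2^{-k} g)$ with an $L^1$ kernel, which is the conclusion. The proof is essentially a bookkeeping exercise; the only subtle point is checking that the rescaled kernel genuinely lies in $L^1$, but this is immediate from the Schwartz decay of $K$.
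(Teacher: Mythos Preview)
Your argument is correct and is exactly the standard kernel proof of this commutator estimate. Note that the paper does not actually supply its own proof of this lemma; it simply cites \cite{MR1869874}, so there is nothing to compare against beyond confirming that your approach matches the classical one.

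One minor remark on presentation: the kernel $\tilde K(y_1,y_2)$ you obtain after the change of variables does depend on $k$ (through the factor $2^{2k}|K(-2^k y_2)|$), so strictly speaking the multilinear form $L$ varies with $k$. This is harmless because, as you verify, the $L^1$ norm $\int |u|\,|K(u)|\,du$ is finite and independent of $k$; the paper's convention for the symbol $L$ is precisely that the kernel may depend on parameters so long as its $L^1$ norm is uniformly bounded. It may be worth saying this explicitly in your write-up so that the reader does not wonder whether a single fixed $L$ is intended.
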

This lemma can be understood in the following sense. Suppose one function $g$ has high frequency $\sim 2^k$ and the other function $f$ has low frequency $\leq 2^k$. 
Then $P_k(fg)- fP_k g$ shifts a derivative from the high-frequency function $g$ to the low-frequency function $f$.
This shift  generally ensures that all such commutator terms can be easily estimated.\\

\textit{The Linear flow.}
The linear part of \eqref{HBO} is an Airy-type equation, 
\begin{equation*}
    \left\{
             \begin{array}{lr}
            \phi_t -\phi_{xxx} = 0  &\\
             \phi(0) = \phi_0.
             \end{array}
\right.
\end{equation*}
To better understand the positive long time behavior of the solution, it is useful to separate the domain of evolution $(t,x)\in \mathbb{R}^{+}\times \mathbb{R}$ into three regions
\begin{enumerate}
    \item The hyperbolic region
    \begin{equation*}
        H : = \{x\gtrsim t^{\frac{1}{3}} \},
    \end{equation*}
    where the solution is oscillatory with dispersive decay.
    \item The self-similar region
    \begin{equation*}
        S : = \{|x|\lesssim t^{\frac{1}{3}} \},
    \end{equation*}
    where the solution essentially looks like a bump function with $t^{\frac{1}{3}}$ decay.
    \item The elliptic region
    \begin{equation*}
        E : = \{x\lesssim -t^{\frac{1}{3}} \},
    \end{equation*}
    where the solution no longer has oscillatory asymptotic behavior, and
consequently has better decay.
\end{enumerate}
By the stationary phase type argument, one can show
\begin{lemma}[Stationary phase, \cite{MR1232192}]
\begin{equation*}
 \| e^{t\partial_{x}^3} \|_{L^1 \rightarrow L^{\infty}}  \lesssim t^{-\frac{1}{3}} .
\end{equation*}
\end{lemma}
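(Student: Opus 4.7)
The target estimate is a fixed-time dispersive bound for the Airy propagator, so the natural strategy is to exhibit the propagator as convolution with an explicit kernel and then bound the kernel uniformly in $x$ by stationary phase (or equivalently by properties of the Airy function).

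First I would write out the kernel. Taking the Fourier transform of $\phi_t = \phi_{xxx}$ gives $\hat\phi(t,\xi) = e^{-it\xi^3}\hat\phi_0(\xi)$, so $e^{t\partial_x^3} f = K_t \ast f$ with
\begin{equation*}
  K_t(x) = \frac{1}{2\pi}\int_{\mathbb{R}} e^{i(x\xi - t\xi^3)}\,d\xi.
\end{equation*}
The rescaling $\xi = t^{-1/3}\eta$ (for $t>0$) turns this into $K_t(x) = t^{-1/3}\,\mathrm{Ai}(x t^{-1/3})$ up to a universal constant, where $\mathrm{Ai}$ is the usual Airy function. Consequently $\|K_t\|_{L^\infty} \lesssim t^{-1/3}\,\|\mathrm{Ai}\|_{L^\infty}$, and the claim reduces to the pointwise bound $|\mathrm{Ai}(y)| \lesssim 1$ uniformly in $y \in \mathbb{R}$.

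Next I would prove this uniform bound by stationary phase. The phase $\varphi(\eta) = y\eta - \eta^3/3$ (or $y\eta + \eta^3/3$, depending on the convention) satisfies $\varphi''(\eta) = \mp 2\eta$ and $\varphi'''(\eta) = \mp 2 \neq 0$. For $y \ll 0$ there is no real critical point, and integration by parts (or Van der Corput on the region where $\varphi'$ is large) yields exponential decay. For $y \gtrsim 1$ there are two simple real critical points at $\eta = \pm\sqrt{y}$ with $|\varphi''| \gtrsim \sqrt{y}$, so the standard stationary-phase contribution is $O(y^{-1/4})$ after summing the two points. The delicate case $|y| \lesssim 1$, where the two critical points coalesce, is handled by Van der Corput's lemma applied at third order: since $|\varphi'''| \geq 2$ on $\mathbb{R}$, one gets $\big|\int_I e^{i\varphi(\eta)}\,d\eta\big| \lesssim 1$ on any bounded interval $I$, while outside a large bounded region the integral is controlled by the first-order Van der Corput bound together with integration by parts against a smooth cutoff. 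Combining the three regimes gives $\|\mathrm{Ai}\|_{L^\infty(\mathbb{R})} \lesssim 1$.

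Finally, Young's convolution inequality yields
\begin{equation*}
  \|e^{t\partial_x^3} f\|_{L^\infty} \;\leq\; \|K_t\|_{L^\infty}\,\|f\|_{L^1} \;\lesssim\; t^{-1/3}\,\|f\|_{L^1},
\end{equation*}
which is the claimed operator bound. The only genuinely nontrivial step is the uniform Airy bound, and within that step the main obstacle is the transition regime $|y| \lesssim 1$ where the two stationary points merge; this is exactly the scenario that Van der Corput's third-derivative estimate is designed to handle, and using it in place of classical second-order stationary phase is what makes the bound $t^{-1/3}$ uniform down to all $x$ rather than degenerating near the Airy caustic $x \sim 0$.
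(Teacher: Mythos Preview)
Your proof is correct and follows the standard route: write the propagator as convolution with a rescaled Airy kernel, reduce to $\|\mathrm{Ai}\|_{L^\infty} < \infty$, and handle the three regimes of $y$ via Van der Corput (with the third-derivative estimate covering the coalescing critical points near $y=0$). Note, however, that the paper does not actually prove this lemma at all; it is stated with a citation to \cite{MR1232192} and no proof is given in the text, so there is no paper argument to compare against beyond the implicit reference to the classical stationary-phase literature.
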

Consider the operator
\begin{equation*}
    L = x + 3t\partial_x^2,
\end{equation*}
it satisfies the following properties:
\begin{equation*}
    [\partial_x, L]=1, \qquad [L, \partial_t -\partial_x^3] = 0,
\end{equation*}
which further indicates that
\begin{equation*}
     \| L\phi(t)\|_{L^2} = \| x\phi(0)\|_{L^2}
\end{equation*}
is a conserved quantity. 
For the nonlinear equation \eqref{HBO}, we define the the nonlinear counterpart $L^{NL}$ of $L$:
\begin{equation}
    L^{NL}\phi = x\phi + 3t\phi_{xx} - \frac{3t}{4}\phi^3 - \frac{3t}{4}[\phi H\phi_x +H(\phi\phi_x)]. \label{NonlinearL}
\end{equation}\\

\textit{The Strichartz estimates.}
For the inhomogeneous equation 
\begin{equation}
    \phi_t - \phi_{xxx} = f, \quad \phi(0) = \phi_0, \label{InhomoAiry}
\end{equation}
we define Strichartz space $\mathcal{S}$ associated to the $L^2$ flow by
\begin{equation*}
    \mathcal{S} = L_t^\infty L_x^2\cap L_t^4\dot{W}^{\frac{1}{4},\infty},
\end{equation*}
together with its dual space
\begin{equation*}
    \mathcal{S}^{'} = L^1_t L_x^2 + L_t^{\frac{4}{3}}\dot{W}^{-\frac{1}{4},1}.
\end{equation*}
We also define
\begin{equation*}
    \mathcal{S}^s = (1+D^2)^{-\frac{s}{2}}\mathcal{S}
\end{equation*}
to denote the similar spaces associated to the flow in $H^s$.
The Strichartz estimates of \eqref{InhomoAiry} in the $L^2$ setting are summarized in the following:
\begin{lemma}[Strichartz estimate,\cite{MR2096258}]
\begin{equation*}
    \| \phi \|_{\mathcal{S}}  \lesssim
    \| \phi_0\|_{L^2}+\| f\|_{\mathcal{S}^{'}}.
\end{equation*}
\end{lemma}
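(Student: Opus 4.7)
The plan is to reduce everything to the homogeneous Strichartz bound $\|e^{t\partial_x^3}\phi_0\|_{\mathcal{S}} \lesssim \|\phi_0\|_{L^2}$ and then obtain the inhomogeneous estimate by a $TT^*$/duality argument combined with the Christ-Kiselev lemma. The $L_t^\infty L_x^2$ component of the homogeneous bound is automatic since $e^{t\partial_x^3}$ is unitary on $L^2$, so the real content lies in the $L^4_t \dot{W}^{1/4,\infty}_x$ component.

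For this, I would first Littlewood-Paley localize. At frequency $2^k$, stationary phase applied to the kernel of $P_k e^{t\partial_x^3}$ yields a frequency-refined dispersive bound
\[ \|P_k e^{t\partial_x^3}\|_{L^1 \to L^\infty} \lesssim (|t|\, 2^k)^{-1/2}, \]
which is a sharper version of the stationary phase lemma stated above (the phase $x\xi + t\xi^3$ has second derivative of size $|t| 2^k$ on the support of $P_k$). Interpolating this against the $L^2 \to L^2$ unitarity and running the standard $TT^*$/Hardy-Littlewood-Sobolev argument gives the frequency-localized Strichartz estimate
\[ \|P_k e^{t\partial_x^3}\phi_0\|_{L^4_t L^\infty_x} \lesssim 2^{-k/4}\|P_k \phi_0\|_{L^2}, \]
which is precisely a $1/4$ derivative gain at frequency $2^k$. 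Summing over $k$ using the square function characterization of the $\dot W^{1/4,\infty}$ norm (or equivalently the almost-orthogonality of the $TT^*$ kernels at different dyadic scales) delivers the homogeneous estimate.

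For the inhomogeneous part, dualizing the homogeneous bound yields the adjoint estimate
\[ \left\| \int_{\mathbb{R}} e^{-s \partial_x^3} f(s)\, ds \right\|_{L^2} \lesssim \|f\|_{\mathcal{S}'}, \]
so composing with $e^{t\partial_x^3}$ and invoking the homogeneous estimate controls the non-retarded integral $\int_\mathbb{R} e^{(t-s)\partial_x^3}f(s)\,ds$ in $\mathcal{S}$. The retarded Duhamel integral $\int_0^t e^{(t-s)\partial_x^3}f(s)\,ds$ that actually represents the solution of \eqref{InhomoAiry} is then recovered from the non-retarded one via the Christ-Kiselev lemma, which applies because the relevant time exponents ($L^\infty$ versus $L^1$, and $L^4$ versus $L^{4/3}$) are strictly separated. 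Linearly adding the homogeneous contribution $e^{t\partial_x^3}\phi_0$ completes the estimate.

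The main obstacle is bookkeeping rather than analysis: $\mathcal{S}$ is an intersection and $\mathcal{S}'$ is a sum, so the inhomogeneous bound splits into four source/target pairings, each of which must be reconciled with the correct frequency localization in order not to lose the $\dot{W}^{1/4,\infty}$ derivative on either end. All four pairings are however part of the classical Kenig-Ponce-Vega theory \cite{MR2096258}, so once the frequency-localized kernel bound $(|t| 2^k)^{-1/2}$ is in hand the remaining steps amount to a routine assembly.
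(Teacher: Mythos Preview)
The paper does not prove this lemma at all: it is stated with a citation to \cite{MR2096258} and used as a black box. So there is no ``paper's own proof'' to compare against. Your sketch is a correct and standard derivation of the Airy Strichartz estimate---frequency-localized dispersive decay $(|t|2^k)^{-1/2}$ via van der Corput, then $TT^*$/Hardy--Littlewood--Sobolev for the $L^4_t L^\infty_x$ bound at each dyadic scale, and Christ--Kiselev to pass from the non-retarded to the retarded Duhamel integral. This is exactly the route taken in the Kenig--Ponce--Vega theory that the paper is citing, so in effect you have reconstructed the content of the reference rather than departed from it.

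One small caveat worth tightening if you write this out in full: the summation over $k$ into the global $\dot{W}^{1/4,\infty}$ norm is not quite a ``square function characterization'' (there is no Littlewood--Paley square function for $L^\infty$); one instead works at the Besov level $l^2_k L^4_t L^\infty_x$, which is how the paper actually uses the estimate (see the $l^2\mathcal{S}$ norm introduced immediately after the lemma). This is cosmetic, not a gap.
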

The Besov type of above estimates is
\begin{equation*}
     \| \phi \|_{l^2 \mathcal{S}}  \lesssim \| \phi_0\|_{L^2}+\| f\|_{l^2 \mathcal{S}^{'}},
\end{equation*}
where
\begin{equation*}
\|\phi \|^2_{l^2 \mathcal{S}} = \sum_k \|\phi_k \|^2_{\mathcal{S}}, \qquad \|\phi \|_{l^2 \mathcal{S}^{'}}^2 = \sum_k \|\phi_k \|^2_{\mathcal{S}^{'}}.
\end{equation*}
There are also bilinear Strichartz estimate.
\begin{lemma}[Bilinear Strichartz estimate]
\begin{equation*}
   \|e^{t\partial_x^3 }P_j f \cdot e^{t\partial_x^3}P_k g \|_{L_{t,x}^2} \lesssim 2^{-max \{j,k\}} \|f \|_{L_{x}^2} \|g \|_{L_{x}^2}
\end{equation*}
for $|j-k|>2$, and for the case $|j-k|\leq 1$ as long as $P_j f$ and $P_k g$ have $O(2^k)$ frequency separation between their supports.
\end{lemma}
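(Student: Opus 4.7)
The plan is to proceed by a standard Fourier-analytic reduction: write the product $e^{t\partial_x^3}P_j f \cdot e^{t\partial_x^3}P_k g$ via its space-time Fourier transform and use Plancherel to translate the $L^2_{t,x}$ norm into an estimate on the pulled-back measure on the resonance surface. Concretely, I would write
\begin{equation*}
(e^{t\partial_x^3}P_j f)(e^{t\partial_x^3}P_k g)(t,x) = \frac{1}{(2\pi)^2}\int e^{i x(\xi_1+\xi_2) - it(\xi_1^3+\xi_2^3)}\widehat{P_j f}(\xi_1)\widehat{P_k g}(\xi_2)\, d\xi_1 d\xi_2,
\end{equation*}
so that the space-time Fourier transform is supported on $\{\tau=-(\xi_1^3+\xi_2^3),\ \eta=\xi_1+\xi_2\}$.

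Next I would change variables from $(\xi_1,\xi_2)$ to $(\eta,\tau)=(\xi_1+\xi_2,\,-\xi_1^3-\xi_2^3)$. The Jacobian is
\begin{equation*}
\left|\det\begin{pmatrix} 1 & 1 \\ -3\xi_1^2 & -3\xi_2^2 \end{pmatrix}\right| = 3|\xi_1^2-\xi_2^2| = 3|\xi_1-\xi_2||\xi_1+\xi_2|.
\end{equation*}
For each $(\eta,\tau)$ the system has at most two solutions in $(\xi_1,\xi_2)$, so applying Cauchy--Schwarz on that finite sum and using Plancherel in $(t,x)$ yields
\begin{equation*}
\|e^{t\partial_x^3}P_j f\cdot e^{t\partial_x^3}P_k g\|_{L^2_{t,x}}^2 \lesssim \int\!\!\int \frac{|\widehat{P_j f}(\xi_1)|^2|\widehat{P_k g}(\xi_2)|^2}{|\xi_1^2-\xi_2^2|}\, d\xi_1 d\xi_2,
\end{equation*}
after changing variables back.

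Finally I would use the frequency localization hypotheses to bound $|\xi_1^2-\xi_2^2|$ from below by $2^{2\max\{j,k\}}$. In the case $|j-k|>2$, without loss of generality $k>j+2$, so $|\xi_2|\sim 2^k\gg|\xi_1|\sim 2^j$, giving both $|\xi_1-\xi_2|\sim 2^k$ and $|\xi_1+\xi_2|\sim 2^k$, hence $|\xi_1^2-\xi_2^2|\sim 2^{2k}$. In the case $|j-k|\le 1$, the assumed $O(2^k)$ separation between the supports of $\widehat{P_j f}$ and $\widehat{P_k g}$ guarantees $|\xi_1-\xi_2|\gtrsim 2^k$, while the fact that both supports lie in $\{|\xi|\sim 2^k\}$ on a single side (which the separation hypothesis excludes the problematic configuration $\xi_1\approx -\xi_2$) yields $|\xi_1+\xi_2|\gtrsim 2^k$, so again $|\xi_1^2-\xi_2^2|\gtrsim 2^{2k}$. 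Pulling this lower bound out of the integral and applying Plancherel to the remaining $L^2$ factors gives the claimed $2^{-\max\{j,k\}}\|f\|_{L^2}\|g\|_{L^2}$ bound.

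The step I expect to require the most care is the low-separation case $|j-k|\le 1$: one must use the separation hypothesis not just to control $|\xi_1-\xi_2|$, but to ensure that the quantity $|\xi_1+\xi_2|$ does not degenerate, since the Jacobian vanishes whenever $\xi_1\approx -\xi_2$. The change of variables is otherwise routine; the essential content of the estimate is the transversality of the characteristic surface $\tau+\xi^3=0$, quantified by the Jacobian $3|\xi_1^2-\xi_2^2|$.
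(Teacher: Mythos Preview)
Your argument is essentially the same as the paper's: both write the product on the Fourier side, perform the change of variables $(\xi_1,\xi_2)\mapsto(\eta,\omega)=(\xi_1+\xi_2,\xi_1^3+\xi_2^3)$ with Jacobian $3(\xi_1^2-\xi_2^2)$, and then use Cauchy--Schwarz (the paper phrases this via duality against a test function $F\in L^2_{t,x}$, which is equivalent) together with the frequency-separation hypothesis to bound $|\xi_1^2-\xi_2^2|\gtrsim 2^{2\max\{j,k\}}$. Your version is in fact more explicit than the paper's, which simply records the Jacobian and asserts the final bound without spelling out the case analysis.

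One small point worth tightening in your $|j-k|\le 1$ discussion: the bare hypothesis ``$O(2^k)$ separation between the supports'' controls $|\xi_1-\xi_2|$ but does not by itself force $\xi_1$ and $\xi_2$ to lie on the same half-line, so it does not automatically rule out $\xi_1\approx -\xi_2$. In the paper this lemma is only invoked for factors such as $\tilde P_j\psi_j^+$ and $\tilde P_k\psi_k^+$ that are already localized to a single sign of frequency, so $|\xi_1+\xi_2|\gtrsim 2^k$ holds there; you have correctly flagged this as the delicate step, but the justification should appeal to that sign restriction (or an equivalent reading of the separation hypothesis) rather than to the separation alone.
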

\begin{proof}
\begin{equation*}
    e^{t\partial_x^3 }P_j f \cdot e^{t\partial_x^3}P_k g = \int \hat{f}_j(\xi_1)\hat{g_k}(\xi_2)e^{i[(\xi_1+\xi_2)x-(\xi_1^3+\xi_2^3)t]}d\xi_1 d\xi_2 .
\end{equation*}
Then by duality, we can choose $\|F(t,x) \|_{L^2_{t,x}} = 1$, and a change of variable $\omega = \xi_1^3+\xi^3_2, \eta = \xi_1+\xi_2$, then the Jacobian $\frac{\partial(\omega , \eta)}{\partial(\xi_1 , \xi_2)} = 3(\xi_1^2 - \xi_2^2)$.
\begin{align*}
     \|e^{t\partial_x^3 }P_j f \cdot e^{t\partial_x^3}P_k g \|_{L_{t,x}^2} = \sup \int  F(\xi_1^3+\xi_2^3, \xi_1+\xi_2)\hat{f}_j(\xi_1)\hat{g}_k(\xi_2)d\xi_1 d\xi_2 \\
     \leq \|F \|_{L^2_{t,x}}\left(\int\hat{f}^2_j(\xi_1)\hat{g}^2_k(\xi_2)\left|\frac{\partial(\omega , \eta)}{\partial(\xi_1 , \xi_2)}\right|^{-2}d\omega d\eta\right)^{\frac{1}{2}} \approx 2^{-max \{j,k\}} \|f \|_{L_{x}^2} \|g \|_{L_{x}^2}.
\end{align*}
\end{proof}

A natural corollary for inhomogeneous equations then follows.
\begin{corollary}
Let $f$ and $g$ be the solution of inhomogeneous airy equation with initial data $P_j f_0$, $P_k g_0$, and source terms {F,G}, such that either $|j-k|>2$ or $|j-k|\leq 1$ but $P_j f_0$ and $P_k g_0$ have $O(2^k)$ frequency separation between their supports. Then we have
\begin{equation*}
    \|f g\|_{L^2_{t,x}}\lesssim 2^{-max \{j,k\}} (\| P_j f_0\|_{L^2}+ \|F \|_{L_t^{\infty}L_x^2})(\| P_k g_0\|_{L^2}+ \|G \|_{L_t^{\infty}L_x^2}).
\end{equation*} \label{t:InhomoBound}
\end{corollary}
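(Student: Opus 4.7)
The plan is to reduce the inhomogeneous statement to the homogeneous bilinear Strichartz estimate via Duhamel's formula and Minkowski's inequality, exploiting the fact that the Airy propagator is a Fourier multiplier and therefore preserves both $L^2$ norms and frequency supports. Throughout, the $L^2_{t,x}$ norm is understood on the bounded time interval on which the solutions are being analyzed, and the factor of time length is absorbed into the implicit constant.

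First I write $f$ and $g$ via Duhamel as
\begin{equation*}
f(t) = e^{t\partial_x^3} P_j f_0 + \int_0^t e^{(t-s)\partial_x^3} F(s)\, ds, \qquad g(t) = e^{t\partial_x^3} P_k g_0 + \int_0^t e^{(t-s)\partial_x^3} G(s)\, ds,
\end{equation*}
and expand $fg$ into four bilinear pieces: linear-linear, linear-Duhamel, Duhamel-linear, and Duhamel-Duhamel. The linear-linear piece
\begin{equation*}
e^{t\partial_x^3} P_j f_0 \cdot e^{t\partial_x^3} P_k g_0
\end{equation*}
is controlled directly by the bilinear Strichartz lemma established above, producing the desired factor of $2^{-\max\{j,k\}} \|P_j f_0\|_{L^2} \|P_k g_0\|_{L^2}$.

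For a mixed term such as $e^{t\partial_x^3} P_j f_0 \cdot \int_0^t e^{(t-s)\partial_x^3} G(s)\, ds$, the key observation is that at each fixed $s$ we can rewrite
\begin{equation*}
e^{(t-s)\partial_x^3} G(s) = e^{t\partial_x^3}\bigl( e^{-s\partial_x^3} G(s)\bigr),
\end{equation*}
so this is a linear Airy evolution of a profile that has the \emph{same} $L^2$ norm and the \emph{same} frequency support as $G(s)$ (since Airy is a unitary Fourier multiplier). Hence the frequency separation hypothesis needed for the bilinear Strichartz estimate is preserved. By Minkowski's inequality I pull the $s$-integration outside the $L^2_{t,x}$ norm and then apply the homogeneous bilinear Strichartz at each $s$:
\begin{equation*}
\Bigl\| e^{t\partial_x^3} P_j f_0 \cdot \int_0^t e^{(t-s)\partial_x^3} G(s)\, ds \Bigr\|_{L^2_{t,x}} \lesssim 2^{-\max\{j,k\}} \|P_j f_0\|_{L^2} \int \|G(s)\|_{L^2}\, ds,
\end{equation*}
and the $s$-integral is controlled by $\|G\|_{L^\infty_t L^2_x}$ on the working time interval. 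The symmetric Duhamel-linear term is handled identically, and the Duhamel-Duhamel term is controlled by applying Minkowski twice and then the bilinear Strichartz to the resulting pure Airy solutions.

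The only delicate point is verifying that the frequency separation assumption survives under Duhamel, but as noted above this is automatic because $e^{\tau\partial_x^3}$ is a Fourier multiplier, so the frequency supports of $F(s)$ and $G(s)$ are preserved through the evolution. Summing the four pieces and collecting factors yields the claimed bound.
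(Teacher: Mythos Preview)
Your proof is correct and is precisely the standard Duhamel-plus-Minkowski argument that the paper has in mind; the paper does not spell out a proof here, merely recording the statement as ``a natural corollary'' of the homogeneous bilinear Strichartz lemma. One small clarification worth making explicit: for the homogeneous bilinear estimate to apply to the Duhamel pieces you need $F(s)$ and $G(s)$ themselves to be localized near frequencies $2^j$ and $2^k$ respectively (not just that the propagator preserves supports); this is implicit in the corollary's hypotheses and in every application in the paper, but your final paragraph reads as if preservation under $e^{\tau\partial_x^3}$ alone settles the matter.
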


\section{Normal form analysis and estimates} \label{s:NormalForm}
In this section we establish apriori $L^2$ bounds for solutions for the Cauchy problem \eqref{HBO}.
The Sobolev index of $L^2$ space is greater than the critical Sobolev index $-\frac{1}{2}$. By the scale invariance, we can rescale the solution and work with solution for which the $L^2$ norm is small. In this case the lifespan is also rescaled, it is natural to consider the solution on the time interval $[-1,1]$.
Resonant analysis suggests that we can use the normal form method to replace a nonresonant quadratic nonlinearity by a higher order cubic nonlinearity.

\subsection{The modified quadratic normal form analysis}
We rewrite \eqref{HBO} as
\begin{equation}
    \phi_t - \phi_{xxx}= -\frac{3}{4}\phi^2\phi_x-\frac{3}{4}\phi_x H\phi_x -\frac{3}{4}\phi H\phi_{xx}-\frac{3}{4}H(\phi_{xx}\phi+ \phi_x \phi_x). \label{HBO1}
\end{equation}
Then we hope to eliminate the quadratic terms, and to transform \eqref{HBO1} further to $(\partial_t-\partial_x^3)\tilde{\phi} = Q^3(\phi,\phi,\phi)+ Q^4(\phi,\phi,\phi,\phi)$ type, where $Q^3$ and $Q^4$ are translation invariant multi-linear forms. 
Let \begin{equation*}
    \tilde{\phi} = \phi + B(\phi,\phi)
\end{equation*}
to be the normal form transformation that eliminate the quadratic terms, where $B(\phi,\phi)$ is a bilinear  translation invariant symmetric function of $\phi$.
\iffalse
\begin{gather*}
     \phi_x = \tilde{\phi_x}-2B(\phi_x,\phi),\\
     \phi_{xx}=\tilde{\phi}_{xx} - 2B(\phi_{xx},\phi)-2B(\phi_x,\phi_x),\\
\phi_{xxx}=\tilde{\phi}_{xxx}-2B(\phi_{xxx},\phi)-6B(\phi_{xx},\phi_x),\\
\phi_t = \tilde{\phi}_t - 2B(\phi_{xxx},\phi)+ h.o.t.
\end{gather*}
To eliminate the quadratic terms,
\begin{equation*}
   B(\phi_x, \phi_{xx}) = -\frac{1}{8}\partial_x[\phi H\phi_x + H(\phi \phi_x)] .
\end{equation*}
It follows by taking Fourier transform,
\begin{equation*}
 \hat{B}(\xi, \eta) = \frac{1}{8}[\frac{sgn \xi + sgn \eta}{\xi + \eta}+ \frac{\xi sgn \xi}{\eta (\eta + \xi )} + \frac{\eta sgn \eta}{\xi (\xi + \eta)}+ \frac{2sgn(\xi + \eta )}{\eta + \xi}+ \frac{sgn(\xi + \eta)\xi}{
 \eta (\xi + \eta)}+\frac{sgn(\xi + \eta)\eta}{\xi (\xi + \eta)}]   .
\end{equation*}
\fi
A direct computation shows the explicit expression of $B(\phi, \phi)$,
\begin{equation*}
    B(\phi, \phi) = -\frac{1}{4}[H(\phi\cdot \partial_x^{-1}\phi )+ H\phi\cdot \partial_x^{-1}\phi],
\end{equation*}
which is not invertible at low frequencies. 
Therefore, we need more refined estimates at different frequencies.
If we apply the operator $P_k^+$($P_k^-$ can be treated a in similar way), where $k\geq 1$, to \eqref{HBO1}, we have,
\begin{equation}
    P_k^+\phi_t - P_k^+\phi_{xxx}= -\frac{3}{4}P_k^+(\phi^2\phi_x)-\frac{3}{4}P_k^+(\phi_x H\phi_x) -\frac{3}{4}P_k^+(\phi H\phi_{xx})+\frac{3}{4}iP_k^+(\phi_{xx}\phi)+ \frac{3}{4}iP_k^+(\phi_x \phi_x) . \label{PKHBO1}
\end{equation}
For each of the four quadratic terms on the right hand side of \eqref{PKHBO1}, we can add an additional term, and form the commutator structure.
\begin{equation*}
\begin{aligned}
    -\frac{3}{4}P_k^+(\phi_x H\phi_x) - \frac{3}{4}i \partial_x \phi_{<k}\partial_x \phi_k^+ &= -\frac{3}{4}P_k^+(\partial_x\phi_{\geq k}H\phi_x) - \frac{3}{4}[P_k^+, \partial_x \phi_{<k}]H\phi_x ,\\
    -\frac{3}{4}P_k^+(\phi H\phi_{xx}) - \frac{3}{4}i  \phi_{<k}\partial_{xx} \phi_k^+ &= -\frac{3}{4}P_k^+(\phi_{\geq k}H\phi_{xx}) - \frac{3}{4}[P_k^+,  \phi_{<k}]H\phi_{xx} ,\\
    \frac{3}{4}i P_k^+(\phi_x^2)-\frac{3}{4}i\partial_x \phi_{<k}\partial_x \phi_k^+ &= \frac{3}{4}iP_k^+(\partial_x\phi_{\geq k}\phi_x) + \frac{3}{4}i[P_k^+ , \partial_x \phi_{<k}]\phi_x ,\\
    \frac{3}{4}i P_k^{+}(\phi_{xx}\phi)-\frac{3}{4}i\phi_{<k}\partial_{xx} \phi_k^+ &= \frac{3}{4}iP_k^{+}(\phi_{\geq k}\phi_{xx}) + \frac{3}{4}i[P_k^+ , \phi_{<k}]\phi_{xx} .
\end{aligned}
\end{equation*}

In this way, define an operator
\begin{equation}
    A_{TBO}^{k,+} = \partial_t - \partial_x^3 - \frac{3}{2}i \partial_x \phi_{<k}\partial_x - \frac{3}{2}i  \phi_{<k}\partial_{xx}, \label{DefAHOB}
\end{equation}
we can the rewrite the equation \eqref{PKHBO1} as
\begin{align*}
   A_{TBO}^{k,+}\phi_k^+ =  -\frac{3}{4}P_k^+(\phi^2\phi_x)  -\frac{3}{4}P_k^+(\partial_x\phi_{\geq k}H\phi_x)  -\frac{3}{4}P_k^+(\phi_{\geq k}H\phi_{xx})  +\frac{3}{4}iP_k^{+}(\partial_x\phi_{\geq k}\phi_x) +\frac{3}{4}iP_k^{+}(\phi_{\geq k}\phi_{xx}) \nonumber\\
    - \frac{3}{4}[P_k^+, \partial_x \phi_{<k}]H\phi_x - \frac{3}{4}[P_k^+,  \phi_{<k}]H\phi_{xx}  + \frac{3}{4}i[P_k^+ , \partial_x \phi_{<k}]\phi_x  + \frac{3}{4}i[P_k^+ , \phi_{<k}]\phi_{xx} .
\end{align*}
We further define
\begin{equation}
    \tilde{\phi}_k^+ = \phi_k^+ + B_k(\phi, \phi), \label{NormalFormK}
\end{equation} 
and use this normal form transformation to eliminate the quadratic term on the right.
Such a transformation is similarly computed and is given by the expression
\begin{equation}
    B_k(\phi, \phi) = \frac{1}{4}[iP_k^+(\phi\cdot \partial_x^{-1}\phi)-P_k^+(H\phi\cdot\partial_x^{-1}\phi) +2i\partial_x^{-1}(P_{<k}\phi)P_k^+\phi]. \label{BilinearK}
\end{equation}
To show that the bilinear form \eqref{BilinearK} is well-defined and the low frequency components are cancelled, we rewrite \eqref{BilinearK} using commutator structure.  
\begin{equation*}
    B_k(\phi, \phi) = \frac{1}{4}\{-[HP_k^+, \partial_x^{-1}\phi_{<k}]\phi - [P_k^+, \partial_x^{-1}\phi_{<k}]H\phi + iP_k^+(\phi \partial_x^{-1}\phi_{\geq k})-P_k^+(H\phi \cdot \partial_x^{-1}\phi_{\geq k})\}.
\end{equation*}
By \eqref{commutator},  we are able to shift a derivative from the high-frequency function $\phi$ or $H\phi$ to the low-frequency function $\partial_x^{-1}\phi_{<k}$.  
This shift ensures that all such commutator terms are easily estimated.
Then we have
\begin{equation}
     A_{TBO}^{k,+}\tilde{\phi}_k^+ = Q_k^3(\phi, \phi, \phi) +Q_k^4(\phi, \phi, \phi, \phi), \label{AHBOkEqn}
\end{equation}
where $Q_k^3(\phi, \phi, \phi)$ contains only cubic terms in $\phi$, and $Q_k^4(\phi, \phi, \phi, \phi)$ contains only quartic terms.
For low frequency case $k = 0$, we need no paradifferential component, and also we want to avoid the operator $P_0^+$ which does not have a smooth symbol. 
We simply write
\begin{align}
    (\partial_t - \partial_x^3)\phi_0 = & -\frac{3}{4}P_0(\phi^2\phi_x)-\frac{3}{4}P_0(\partial_x \phi_0H\phi_x)-\frac{3}{4}P_0(\phi_0H\phi_{xx}) - \frac{3}{4}P_0H(\phi_0\phi_{xx})  -\frac{3}{4}P_0H(\partial_x \phi_0 \phi_x) \nonumber\\
   &- \frac{3}{4}P_0(\phi_{>0}H\phi_{xx})-\frac{3}{4}P_0(\partial_x \phi_{>0}H\phi_x)-\frac{3}{4}P_0H(\phi_{>0}\phi_{xx})-\frac{3}{4}P_0H(\partial_x \phi_{>0}\phi_x).  \label{HBO0}
\end{align}
We only need to eliminate the last four terms on the right hand side of \eqref{HBO0} using normal form transformation. 
The other quadratic terms on the right is purely at low frequency and will play a perturbative role. 
An easy computation shows that
\begin{equation*}
    B_0(\phi, \phi) = -\frac{1}{4}[P_0(\partial_x^{-1}\phi_{>0}\cdot H\phi) + P_0H(\partial_x^{-1}\phi_{>0}\cdot H\phi)].
\end{equation*}
Next, we perform gauge transformation to simplify the terms on the left hand side of the equality. 
Note that many terms on the left have low frequency parts, so we choose the gauge transform 
\begin{equation}
    \psi_k^+ := \tilde{\phi}_k^+\cdot e^{-i\Phi_{<k}}. \label{GaugeTransform}
\end{equation}
Here $\Phi(t,x)$ is a real auxiliary function to be determined later.
The gauge transform \eqref{GaugeTransform} is similar to the Cole-Hopf transformation.
One observes that the expression  works as a normal form transformation, in the sense that it removes the paradifferential quadratic terms. 
The difference is that the exponential of an imaginary function is a bounded transformation, whereas the corresponding quadratic normal form is not. 
One also sees the difference reflected at the level of cubic or higher order terms
obtained after implementing these transformations.

 By applying this gauge transform \eqref{GaugeTransform}, we rewrite the equation \eqref{AHBOkEqn} as a
nonlinear Airy type equation for our final normal form variable $\psi_k^+$, with essentially cubic, quartic and quintic nonlinear terms:
\begin{equation}
    (\partial_t -\partial_x^3)\psi_k^+ = [\tilde{Q}_k^3(\phi,\phi,\phi)+\tilde{Q}_k^4(\phi,\phi,\phi,\phi)+\tilde{Q}_k^5(\phi,\phi,\phi,\phi,\phi)]e^{i\Phi_{<k}}, \label{NonlinearAiry}
\end{equation}
where $\tilde{Q}_k^3$, $\tilde{Q}_k^4$ and $\tilde{Q}_k^5$ contain only cubic, quartic, respectively quintic terms.
Obviously the exponent factor $e^{i\Phi_{<k}}$ contributes nothing in $L^2$ norm.

Direct computation shows
\begin{equation*}
\begin{aligned}
\partial_t\tilde{\phi}_k^+ &=\partial_t(\psi_k^+ e^{i\Phi_{<k}})
    = (\partial_t \psi_k^+ + i \partial_t
    \Phi_{<k}\psi_k^+)e^{i\Phi_{<k}} ,\\
    &\\
    \partial_x\tilde{\phi}_k^+  &=\partial_x(\psi_k^+ e^{i\Phi_{<k}})
    = (\partial_x \psi_k^+ + i \partial_x
    \Phi_{<k}\psi_k^+)e^{i\Phi_{<k}},\\
    & \\
    \partial_{xx}\tilde{\phi}_k^+ &= [\partial_{xx}\psi_k^+ +2i\partial_x \Phi_{<k}\partial_x\psi_k^+ +i \partial_{xx}\Phi_{<k}\psi_k^+ - (\partial_x \Phi_{<k})^2\psi_k^+]e^{i\Phi_{<k}},\\
    & \\
    \partial_{x}^3\tilde{\phi}_k^+ &= [\partial_x^3 \psi_k^+ + 2i\partial_{xx}\Phi_{<k}\partial_x\psi_k^+ +2i\partial_{x}\Phi_{<k}\partial_{xx}\psi_k^+ +i \partial_x^3\Phi_{<k}\psi_k^+ \\
    &+i\partial_x^2\Phi_{<k}\partial_x \psi_k^+ -2\partial_x\Phi_{<k}\partial_{xx}\Phi_{<k}\psi_k^+
     - (\partial_x \Phi_{<k})^2\partial_x \psi_k^+ + i\partial_x \Phi_{<k}\partial_x^2\psi_k^+ \\
     &-2(\partial_x \Phi_{<k})^2\partial_x\psi_k^+  -\partial_{xx} \Phi_{<k}\partial_x\Phi_{<k}\psi_k^+ - i(\partial_x \Phi_{<k})^3 \psi_k^+]e^{i\Phi_{<k}}.
\end{aligned}
\end{equation*}
By plugging in above terms into the definition of $A_{TBO}^{k,+}$ \eqref{DefAHOB}, we have by using \eqref{phirelation},
\begin{align*}
   e^{-i\Phi_{<k}} A_{TBO}^{k,+}\tilde{\phi}_k^+ &= (\partial_t - \partial_x^3)\psi_k^+ +( - \frac{3}{2}i\phi_{<k} - 3i\partial_x \Phi_{<k})\partial_x^2\psi_k^+ \\
   &+[-3i\partial_x^2\Phi_{<k}+ 3(\partial_x \Phi_{<k})^2+3\phi_{<k}\partial_{x}\Phi_{<k}-\frac{3}{2}i\partial_x\phi_{<k}]\partial_x \psi_k^+ \\
   &+[i\partial_t \Phi_{<k}-i\partial_x^3\Phi_{<k}-2i(\partial_x \Phi_{<k})^3+\frac{3}{2}\partial_x \phi_{<k}\partial_x \Phi_{<k}]\psi_k^+. 
   %+ A\tilde{\phi}_k^+\cdot e^{-i\Phi_{<k}}
\end{align*} 
To eliminate $\partial_x^2 \psi_k^+$ the term, we let its coefficient to be 0, so that
\begin{equation}
    \phi = -2\partial_x\Phi, \label{phirelation}
\end{equation}
and by \eqref{HBO1}, $\Phi$ satisfies the equation
\begin{equation*}
    \partial_t \Phi = \partial_x^3\Phi -(\partial_x \Phi)^3+\frac{3}{2}[\partial_x\Phi H\partial_x^2\Phi + H(\partial_x \Phi \partial_x^2 \Phi)].
    \end{equation*}
%From here we can get
%\begin{equation}
%    \partial_t \Phi = -\frac{1}{8}[4\phi_{xx}-\phi^3-3\phi H\phi_x -3H(\phi\phi_x)].
%\end{equation}
The coefficient for $\partial_x \psi_k^+$ terms becomes 
\[-\frac{3}{4}\phi_{<k}^2, 
\]
and the coefficient for $\psi_k^+$ terms becomes
\[
-\frac{3}{4}(\partial_x\phi_{<k})^2 +\frac{3}{8}iP_{<k}[\phi^3+\phi H\phi_x +H(\phi\phi_x)].
\]

Next, we compute each source term $Q^3_k$ and $\tilde{Q}^i_k$, $i = 3,4,5$.
They are given by
\begin{align*}
   Q_k^3(\phi, \phi, \phi) &= -\frac{1}{4}P_k^+\partial_x(\phi^3)+\frac{3}{4}B_k\big(\partial_x(\phi H\phi_x), \phi\big)+\frac{3}{4}B_k\big(\phi, \partial_x(\phi H\phi_x)\big) + \frac{3}{4}B_k\big(\partial_x H(\phi_x \phi), \phi\big)\\
    & \qquad + \frac{3}{4}B_k\big(\phi,\partial_x H(\phi_x \phi)\big)-\frac{3}{2}i\partial_x\phi_{<k}\partial_{x}B_k(\phi, \phi) - \frac{3}{2}i  \phi_{<k} \partial_x^2B_k(\phi, \phi),  \\
    &\\
  Q_k^4(\phi, \phi, \phi, \phi) &= \frac{1}{4}B_k\big(\partial_x(\phi^3),\phi\big)+\frac{1}{4}B_k\big(\phi, \partial_x(\phi^3)\big), \\
   &\\
  \tilde{Q}_k^3(\phi, \phi, \phi) &=  Q_k^3(\phi, \phi, \phi)+ \frac{3}{4}\phi_{<k}^2\partial_x\phi_k^+ +\frac{3}{4}(\partial_x\phi_{<k})^2 \phi_k^+ -\frac{3}{8}iP_{<k}\big(\phi H\phi_x+H(\phi\phi_x)\big)\phi_k^+ ,\\
  &\\
   \tilde{Q}_k^4(\phi, \phi, \phi, \phi) &= Q_k^4(\phi, \phi, \phi, \phi)+\frac{3}{8}(i+2)\phi_{<k}^3\phi_k^+  -\frac{3}{8}iP_{<k}(\phi^3)\phi_k^+ + \frac{3}{4}\phi_{<k}^2\partial_xB_k(\phi, \phi)\\
   &\qquad +\frac{3}{4}(\partial_x\phi_{<k})^2 B_k(\phi, \phi) -\frac{3}{8}iP_{<k}\big(\phi H\phi_x + H(\phi \phi_x)\big)B_k(\phi, \phi),\\
    &\\
   \tilde{Q}_k^5(\phi, \phi, \phi, \phi, \phi) &= \frac{3}{8}(i+2)\phi_{<k}^3B_k(\phi, \phi) -\frac{3}{8}iP_{<k}(\phi^3)B_k(\phi, \phi).
\end{align*}
The case $k = 0$ is special here as well, we simple set $\psi_0 = \tilde{\phi}_0$, and use the original equation.
Owing to the commutator structure, we can easily obtain a similar estimate.

This concludes the algebraic part of the analysis. Our next goal is study the analytic
properties of those multilinear forms:
\begin{lemma}
The quadratic form $B_k$ can be expressed as
\begin{equation*}
    B_k(\phi, \phi) = 2^{-k}L_k(H\phi_{<k}, \phi_k)+ 2^{-k}L_k(\phi_{<k}, H\phi_k) + \sum_{j \geq k}2^{-j}L_k(\phi_j, \phi_j) = 2^{-k}L_k(\phi, \phi).
\end{equation*}
For multilinear expressions $Q_k^i$ and $\tilde{Q}_k^i$, if we define $\Bar{\phi}$ to be either $\phi$ or $H\phi$, then they can be written in translation invariant forms of the type
\begin{align*}
    &Q_k^3(\phi, \phi, \phi) = 2^kL_k(\Bar{\phi}, \Bar{\phi}, \Bar{\phi}),\\
    &Q_k^4(\phi, \phi, \phi, \phi) = L_k(\Bar{\phi},\Bar{\phi}, \Bar{\phi}, \Bar{\phi}),\\
    &\tilde{Q}_k^3(\phi, \phi, \phi) =2^k L_k(\Bar{\phi}, \Bar{\phi}, \Bar{\phi}),\\
&\tilde{Q}_k^4(\phi, \phi, \phi, \phi) = L_k(\Bar{\phi},\Bar{\phi}, \Bar{\phi}, \Bar{\phi}),\\
&\tilde{Q}_k^5(\phi,\phi, \phi, \phi, \phi) = 2^{-k}L_k(\Bar{\phi},\Bar{\phi}, \Bar{\phi}, \Bar{\phi},\Bar{\phi}),
      \end{align*}
all with output frequency $2^k$.  \label{t:BilinearK}
\end{lemma}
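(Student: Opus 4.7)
The plan is to establish the claimed structural representations by combining two basic rules: (i) an inverse derivative $\partial_x^{-1}$ applied to a dyadic piece $\phi_j$ contributes a factor $2^{-j}$, while a derivative $\partial_x$ acting on an expression whose output is at frequency $\sim 2^k$ contributes a factor $2^k$; and (ii) the Leibniz commutator identity \eqref{commutator} rewrites $[P_k^+, f]g$ as $L_k(f_x, 2^{-k}g)$, transferring the derivative from the high-frequency factor onto the low-frequency one while extracting the desired $2^{-k}$ gain.

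I would first address $B_k$, starting from the commutator rewriting of \eqref{BilinearK}:
\begin{equation*}
    B_k(\phi, \phi) = \tfrac{1}{4}\bigl\{-[HP_k^+, \partial_x^{-1}\phi_{<k}]\phi - [P_k^+, \partial_x^{-1}\phi_{<k}]H\phi + iP_k^+(\phi \, \partial_x^{-1}\phi_{\geq k})-P_k^+(H\phi \cdot \partial_x^{-1}\phi_{\geq k})\bigr\}.
\end{equation*}
Applying \eqref{commutator} to the first two terms, with $f = \partial_x^{-1}\phi_{<k}$ so that $f_x = \phi_{<k}$, and noting that the second argument is selected by $P_k^+$ at frequency $\sim 2^k$, produces the two paradifferential summands $2^{-k}L_k(H\phi_{<k},\phi_k)$ and $2^{-k}L_k(\phi_{<k},H\phi_k)$. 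For the last two terms I dyadically decompose $\partial_x^{-1}\phi_{\geq k}=\sum_{j\geq k}\partial_x^{-1}\phi_j$, which furnishes an individual $2^{-j}$ factor, and observe that for the product to land at frequency $2^k$ the free factor $\phi$ (respectively $H\phi$) must be localized at frequency $\sim 2^j$; this yields the high-high sum $\sum_{j\geq k}2^{-j}L_k(\phi_j,\phi_j)$. The compact form $B_k(\phi,\phi)=2^{-k}L_k(\phi,\phi)$ then follows by bounding each summand by its leading $2^{-k}$ coefficient.

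For the source terms $Q_k^3$, $Q_k^4$, $\tilde Q_k^3$, $\tilde Q_k^4$, $\tilde Q_k^5$, I would inspect each summand using the same two rules. The genuinely cubic pieces of $Q_k^3$ such as $P_k^+\partial_x(\phi^3)$ and the paradifferential contributions $\partial_x\phi_{<k}\,\partial_x B_k(\phi,\phi)$ and $\phi_{<k}\,\partial_x^2 B_k(\phi,\phi)$ carry one net derivative at the output frequency, hence yield $2^kL_k(\bar\phi,\bar\phi,\bar\phi)$ after substituting the $B_k$ structure above. The terms of the form $B_k(\partial_x(\phi H\phi_x),\phi)$ contribute their $2^{-k}$ factor from $B_k$ balanced against a single derivative inside, again giving $2^kL_k$. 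The $Q_k^4$ terms $B_k(\partial_x(\phi^3),\phi)$ balance the derivative against the $2^{-k}$ of $B_k$ to produce an $L_k$ form with no net scaling. The corrections that upgrade $Q_k^3,Q_k^4$ to $\tilde Q_k^3,\tilde Q_k^4$ consist of multiplication by bounded low-frequency factors like $\phi_{<k}^2$, $(\partial_x\phi_{<k})^2$, or $P_{<k}[\phi H\phi_x+H(\phi\phi_x)]$, which preserve both the $L_k$ structure and the scaling. Finally $\tilde Q_k^5$ has $B_k(\phi,\phi)$ as an explicit factor, so it inherits the $2^{-k}$ prefactor with no derivative available to cancel it.

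The main obstacle throughout is that $\partial_x^{-1}$ is unbounded on low frequencies and cannot be estimated in isolation. The key organizational principle is that it never occurs alone: it is either wedged inside a commutator, where \eqref{commutator} replaces it by its own derivative $\phi_{<k}$, or it is paired with a high-frequency tail $\phi_{\geq k}$, in which case the dyadic factor $2^{-j}$ renders the resulting series summable with the claimed $2^{-k}$ bound. Once these compensations are made explicit via \eqref{commutator} and dyadic decomposition, the remainder of the argument is routine frequency bookkeeping within the $L_k$ framework.
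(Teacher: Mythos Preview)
Your approach is essentially the same as the paper's: both rely on the commutator identity \eqref{commutator} for the low-high paradifferential pieces of $B_k$, a dyadic decomposition for the high-high tail, and then straightforward derivative/frequency bookkeeping for the $Q_k^i$ and $\tilde Q_k^i$. The paper proceeds identically, and in fact treats only $Q_k^3$ in detail (with $B_k(\partial_x(\phi H\phi_x),\phi)$ as the representative example), declaring the remaining forms ``similar.''

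One small point of imprecision: when you decompose $\partial_x^{-1}\phi_{\geq k}=\sum_{j\geq k}\partial_x^{-1}\phi_j$ and assert that the free factor $\phi$ must sit at frequency $\sim 2^j$, this is only forced when $j>k$. For the boundary term $j=k$ the free factor can perfectly well lie at low frequency, producing an additional low-high contribution $2^{-k}L_k(\phi_{<k},\phi_k)$. The paper avoids this by instead splitting the \emph{unlocalized} factor $\phi=\phi_{<k}+\phi_{\geq k}$ first, which cleanly routes the low-high piece into the paradifferential sum and leaves a genuinely balanced high-high remainder. Your conclusion is still correct since the stray low-high term is of the same $2^{-k}L_k$ type, but the argument as written has a small gap at $j=k$.

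For the $B_k(\partial_x(\phi H\phi_x),\phi)$ terms the paper is slightly more explicit than you are: it writes out the commutator form, then for the piece $P_k^+[\partial_x(\phi H\phi_x)\cdot\partial_x^{-1}\phi_{\geq k}]$ uses the product rule to pull the derivative outside the projection before invoking $\partial_x P_k = 2^k L_k$, and finally records the identity $L_k(\bar\phi,\bar\phi,\bar\phi_x)=2^kL_k(\bar\phi,\bar\phi,\bar\phi)$. Your ``$2^{-k}$ from $B_k$ balanced against a single derivative inside'' is the correct heuristic (the $\partial_x^{-1}$ in $B_k$ cancels the outer $\partial_x$, leaving one interior derivative), but spelling out the product-rule step would make the $2^k$ scaling unambiguous.
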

\begin{proof}
For first 2 terms of $B_k(\phi, \phi)$ we use the properties of commutator structure to shift a derivative from high-frequency to low-frequency function.
For the two remaining terms we split the unlocalized $\phi$ factor into
$\phi_{<k}+\phi_{\geq k}$. The terms involving $\phi_{<k}$ become a part of $2^{-k}L_k(H\phi_{<k}, \phi_k)+ 2^{-k}L_k(\phi_{<k}, H\phi_k)$ since we have
\begin{equation*}
    \partial_x^{-1}\phi_{\geq k}= 2^{-k}L(\phi).
\end{equation*}
For the remaining bilinear terms in $\phi_{\geq k}$ the frequencies of two inputs must be balanced at some frequency $2^j$ with $j \geq k$.
The rest expressions follow from direct computations and that of $B_k(\phi, \phi)$. 
\par Next, we consider each term of $Q_k^3(\phi, \phi, \phi)$ separately. 
The first term is direct, since the spacial derivative $\partial_x$ is equivalent to the factor $2^k$ under the frequency projection operator $P_k$.
For next four terms, we can use the expression for $B_k(\phi, \phi)$, for some terms we may use the properties of commutator structure.
For example, 
\begin{equation*}
 \begin{aligned}
    B_k(\partial_x(\phi H\phi_x), \phi) = -\frac{1}{4}\{ [HP_k^+, P_{<k}(\phi H\phi_x)]\phi + [P_k^+, P_{<k}(\phi H\phi_x)]\phi   \\
    -iP_k[\partial_x(\phi H\phi_x)\cdot\partial_x^{-1}\phi_{\geq k}] +P_k^+[H\partial_x(\phi H\phi_x)\cdot\partial_x^{-1}\phi_{\geq k}] \}.
 \end{aligned}
\end{equation*}
The first two equals $L_k(\Bar{\phi}, \Bar{\phi}, \Bar{\phi}_x)$.
For the third term
\begin{equation*}
    P_k[\partial_x(\phi H\phi_x)\cdot\partial_x^{-1}\phi_{\geq k}] =P_k\partial_x[\phi H\phi_x\cdot \partial_x^{-1}\phi_{\geq k}] - P_k[\phi H\phi_x \phi_{\geq k}],
\end{equation*}
it equals $L_k(\Bar{\phi}, \Bar{\phi}, \Bar{\phi}_x)$ due to the fact $\partial_x^{-1}\phi_{\geq k} = 2^{-k}\phi$.
The fourth term is similar to the third term.
As for the last four terms of $Q_k^3(\phi, \phi, \phi)$, it follows directly from the above expression of $B_k(\phi,\phi)$.
Note that 
\begin{equation*}
    L_k(\Bar{\phi},\Bar{\phi},\Bar{\phi}_x) = L_k(\partial_x(\Bar{\phi})^3) = 2^k L_k(\Bar{\phi},\Bar{\phi},\Bar{\phi}),
\end{equation*}
we can obtain the final expression for $Q_k^3(\phi, \phi, \phi)$. 
The proof for other $Q_k^i$ or $\tilde{Q}^i_k$ is similar.
\end{proof}

\subsection{Bounds using the bootstrap argument}
We already know from \cite{MR3005535} that the equation \eqref{HBO} is $H^{2}$ globally well-posed.
We prove further an apriori estimate using a standard continuity argument.
We first assume that the initial data is small in $L^2$ norm,
\begin{equation}
    \| \phi(0)\|_{L^2} \leq \epsilon, \label{InitialCondition}
\end{equation}
for a small positive constant $\epsilon$.
Then our main apriori estimate is as follows:
\begin{theorem}
Let $\{ c_k\}_{k=0}^\infty \in l^2$, and $\epsilon c_k$ be a frequency envelope of $\phi$ in $L^2$, so that the  initial condition \eqref{InitialCondition} is strengthen to
\begin{equation}
    \|\phi_k(0) \|_{L^2} \lesssim  \epsilon c_k, \label{InitialConditionK}
\end{equation}
as explained in Section \ref{s:Definition}.
Let $0<t\leq 1$ and $j$, $k\in \mathbb{N}$ , define
\begin{equation}
    M(t) :=  \sup_j c_j^{-2} \| P_j\phi\|^2_{\mathcal{S}^0[0,t]}+  \sup_{j\neq k} \sup_{y\in \mathbb{R}} c_j^{-1}c_k^{-1}2^{\text{max}\{ j,k\}}\|\phi_jT_y \phi_k \|_{L^2[0,t]}.
\end{equation}
For some fixed $t_0$, $ M(t_0) \lesssim \epsilon^2.$ \label{Theorem4}
\end{theorem}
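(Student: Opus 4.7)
The argument is a standard continuity / bootstrap. Since $M(0) \lesssim \epsilon^2$ by \eqref{InitialConditionK} and $M$ is continuous along the smooth $H^2$ solution provided by \cite{MR3005535}, it suffices to show that the hypothesis $M(t) \leq C_0 \epsilon^2$ on $[0, t_0]$ forces $M(t) \lesssim \epsilon^2$ with implicit constant independent of $C_0$. The first step is to pass from $\phi_k^\pm$ to the normal-form variables $\psi_k^\pm$ of \eqref{NormalFormK}--\eqref{GaugeTransform}. Since $|e^{i\Phi_{<k}}| \equiv 1$ and $B_k(\phi,\phi) = 2^{-k} L_k(\phi,\phi)$ by Lemma~\ref{t:BilinearK}, a H\"older estimate yields $\|\psi_k^+ - \phi_k^+ e^{-i\Phi_{<k}}\|_{L^\infty_t L^2_x} \lesssim 2^{-k} \|\phi\|_{L^\infty_t L^2_x}^2 \lesssim \epsilon^2 c_k$ under the bootstrap, and an analogous quadratic bound holds for the Strichartz norm and the bilinear $L^2$ norm that define $M$. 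Consequently $\phi_k^\pm$ and $\psi_k^\pm$ are interchangeable up to perturbations of size $O(\epsilon^2 c_k)$, which are absorbed for $\epsilon$ small; the case $k = 0$ is treated identically using $\psi_0 = \tilde\phi_0$.

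Once reduced to $\psi_k^\pm$, we apply the linear Strichartz estimate and the inhomogeneous bilinear Strichartz of Corollary~\ref{t:InhomoBound} to the equation $(\partial_t - \partial_x^3)\psi_k^+ = [\tilde Q_k^3 + \tilde Q_k^4 + \tilde Q_k^5] e^{i\Phi_{<k}}$. This gives
\begin{equation*}
\|\psi_k^+\|_{\mathcal{S}} \lesssim \|\psi_k^+(0)\|_{L^2} + \sum_{n=3}^{5}\|\tilde Q_k^n\|_{\mathcal{S}'},
\end{equation*}
with the data term bounded by $\epsilon c_k$ via \eqref{InitialConditionK}, and an analogous bound with an extra factor $2^{-\max\{j,k\}}$ for $\|\psi_j^\pm T_y \psi_k^\pm\|_{L^2_{t,x}}$. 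The task reduces to controlling each source term in $\mathcal{S}'$ (resp. in $L^\infty_t L^2_x$ for the bilinear piece). By Lemma~\ref{t:BilinearK}, the quartic and quintic pieces $\tilde Q_k^4 = L_k(\bar\phi, \bar\phi, \bar\phi, \bar\phi)$ and $\tilde Q_k^5 = 2^{-k} L_k(\bar\phi, \bar\phi, \bar\phi, \bar\phi, \bar\phi)$ carry no positive power of $2^k$, and standard H\"older plus Bernstein plus Strichartz estimates bound them in $L^1_t L^2_x$ by $O(\epsilon^4 c_k)$ and $O(\epsilon^5 c_k)$, both easily absorbed.

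The main obstacle is the cubic source $\tilde Q_k^3 = 2^k L_k(\bar\phi, \bar\phi, \bar\phi)$, where the $2^k$ derivative loss at output frequency $2^k$ must be recovered from the trilinear structure. Decomposing the three inputs at dyadic frequencies $k_1 \leq k_2 \leq k_3$, output localization forces $k_3 \gtrsim k$. We factor
\begin{equation*}
L_k(\bar\phi_{k_1}, \bar\phi_{k_2}, \bar\phi_{k_3}) = L_k\bigl(\bar\phi_{k_1},\, L(\bar\phi_{k_2}, \bar\phi_{k_3})\bigr),
\end{equation*}
estimate $L(\bar\phi_{k_2}, \bar\phi_{k_3})$ in $L^2_{t,x}$ via bilinear Strichartz (invoking the built-in frequency separation when $k_2 \ll k_3$, and a further decomposition that produces the needed separation when $k_2 \sim k_3$), which gives a crucial gain $2^{-k_3} \leq 2^{-k}$ cancelling the $2^k$ loss, and pair it with $\bar\phi_{k_1}$ placed in $L^2_t L^\infty_x$ via the $L^4_t \dot W^{1/4,\infty}_x$ component of $\mathcal{S}$. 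Summing over $k_1, k_2, k_3$ using the slow-variation property of $c_k$ produces $\|\tilde Q_k^3\|_{\mathcal{S}'} \lesssim \epsilon^3 c_k$, which is dominated by $\epsilon^2 c_k$ for $\epsilon$ small and closes the Strichartz portion of the bootstrap.

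The bilinear portion of $M(t)$ is handled in parallel: Corollary~\ref{t:InhomoBound} yields a factor $2^{-\max\{j,k\}}$ that precisely compensates the $2^{\max\{j,k\}}$ weight in the definition of $M$, and the sources $\tilde Q_j^n$, $\tilde Q_k^n$ on the right are controlled by the estimates above. Combining both pieces gives $M(t_0) \lesssim \epsilon^2$ as claimed. The hard step is thus the cubic term $\tilde Q_k^3$: every derivative loss in the analysis has to be paid for by a bilinear Strichartz gain, and the small-frequency-separation subcase $k_2 \sim k_3$ is where most of the bookkeeping lives.
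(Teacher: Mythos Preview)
Your proposal tracks the paper's argument closely: same bootstrap setup, same reduction to the renormalized variables $\psi_k^\pm$, same use of Lemma~\ref{t:BilinearK} for the structure of the sources, and the same appeal to Corollary~\ref{t:InhomoBound} for the bilinear half of $M$. Two steps are handled too loosely, however.

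The first is the fully diagonal cubic interaction. When $k_1\approx k_2\approx k_3\approx k$ in $\tilde Q_k^3 = 2^k L_k(\bar\phi,\bar\phi,\bar\phi)$, your ``further decomposition that produces the needed separation'' does not go through: the configuration $\xi_1\approx\xi_2\approx\xi_3\approx 2^{k-1}$, all of the same sign, sums to roughly $3\cdot 2^{k-1}$, lands in the $P_k^+$ support, and admits no separated pair among the inputs. Bilinear Strichartz is simply unavailable in this regime. The paper does not try to force a bilinear gain here; its third case in the proof of Lemma~\ref{t:tildeQ345} abandons bilinear Strichartz and estimates directly in $L^2_tL^2_x$ by placing two factors in $L^4_t\dot W^{1/4,\infty}_x$ and one in $L^\infty_tL^2_x$.

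The second is the bilinear transfer back to $\phi_j\phi_k$. Because the gauge factor $e^{-i\Phi_{<k}}$ delocalizes $\psi_k^+$ in frequency, Corollary~\ref{t:InhomoBound} cannot be applied to the pair $(\psi_j^+,\psi_k^+)$ as stated; one must first re-project with enlarged multipliers $\tilde P_j,\tilde P_k$, obtain the bilinear bound for $\tilde P_j\psi_j^+\cdot\tilde P_k\psi_k^+$, and then control the error $\tilde P_j\psi_j^+\tilde P_k\psi_k^+-\phi_j^+e^{-i\Phi_{<j}}\phi_k^+e^{-i\Phi_{<k}}$. The paper does this via the commutator expansion
\[
\tilde P_j\psi_j^+-\phi_j^+e^{-i\Phi_{<j}}=2^{-j}L(\phi_{<j},\phi_j,e^{-i\Phi_{<j}})+\sum_{l>j}2^{-l}L(\phi_l,\phi_l,e^{-i\Phi_{<j}}),
\]
combined with the bootstrap bilinear bound \eqref{Bootstraptwo}; this is more than the blanket ``analogous quadratic bound'' you invoke, since the error is not just $B_k(\phi,\phi)e^{-i\Phi_{<k}}$ but also involves $[\tilde P_j-1,e^{-i\Phi_{<j}}]\phi_j^+$.
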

To prove this, for a large constant $C$ which is independent of $\epsilon$,
we use a bootstrap argument by assuming that 
\begin{equation*}
    M(t_0) \leq C^2 \epsilon^2,
\end{equation*}
and then to show the following improved result
\begin{equation*}
    M(t_0) \lesssim  \epsilon^2 + C^6\epsilon^6.
\end{equation*}
Given the bootstrap assumption, we have the starting estimate
\begin{equation}
    \| \phi_k\|_{\mathcal{S}} \lesssim Cc_k\epsilon, \label{BootstrapOno}
\end{equation}
and
\begin{equation}
    \|\phi_j T_y \phi_k  \|_{L^2} \lesssim 2^{- max\{j,k \}}C^2 \epsilon^2 c_j c_k, \qquad j\neq k,\quad y\in \mathbb{R},   \label{Bootstraptwo}
\end{equation}
where in the bilinear case, we also allow $j = k$ provided the two localization multipliers are at least $2^{k-4}$ separated.
In the following step, we will consider first the bound for $\psi_k^+$, 
and obtain the bound for $\phi_k^+$ according to the normal form analysis.
\begin{lemma}\label{t:psik}
Assume \eqref{InitialConditionK}, then we have 
\begin{equation*}
    \|\psi_k^+(0) \|_{L^2} \lesssim  \epsilon c_k.
\end{equation*} 
\end{lemma}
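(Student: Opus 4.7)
The key observation is that the gauge factor $e^{-i\Phi_{<k}}$ has modulus one (since $\Phi$ is real-valued), so it is an $L^2$ isometry:
\[
\|\psi_k^+(0)\|_{L^2} = \|\tilde{\phi}_k^+(0)\|_{L^2} \leq \|\phi_k^+(0)\|_{L^2} + \|B_k(\phi_0,\phi_0)\|_{L^2}.
\]
The linear piece is immediate: by $L^2$-boundedness of the Riesz projection (Plancherel) together with the envelope hypothesis \eqref{InitialConditionK}, one has $\|\phi_k^+(0)\|_{L^2} \leq \|P_k\phi(0)\|_{L^2} \lesssim \epsilon c_k$. It therefore remains to produce a comparable, in fact quadratically smaller, bound for $B_k(\phi_0,\phi_0)$.

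For that I would directly invoke the structural decomposition from Lemma~\ref{t:BilinearK},
\[
B_k(\phi_0,\phi_0) \;=\; 2^{-k}L_k(H\phi_{0,<k},\phi_{0,k}) \;+\; 2^{-k}L_k(\phi_{0,<k},H\phi_{0,k}) \;+\; \sum_{j\geq k}2^{-j}L_k(\phi_{0,j},\phi_{0,j}),
\]
and estimate each piece in $L^2$ using H\"older's inequality in the form $L^\infty\times L^2$, which is available for the $L_k$ expressions since their kernels lie in $L^1$. The $L^\infty$ norms on dyadic pieces are then extracted from Bernstein's inequality combined with \eqref{InitialConditionK}, namely $\|\phi_{0,j}\|_{L^\infty}\lesssim 2^{j/2}\|\phi_{0,j}\|_{L^2}\lesssim 2^{j/2}\epsilon c_j$.

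For the low-high terms, summing $j<k$ via the slow-variation inequality $c_j\leq 2^{\delta(k-j)}c_k$ with $\delta$ small yields $\|\phi_{0,<k}\|_{L^\infty}\lesssim 2^{k/2}\epsilon c_k$, so each contributes at most $2^{-k}\cdot 2^{k/2}\epsilon c_k\cdot \epsilon c_k \lesssim 2^{-k/2}\epsilon^2 c_k^2$. For the high-high diagonal sum, the same Bernstein estimate gives $\|L_k(\phi_{0,j},\phi_{0,j})\|_{L^2}\lesssim 2^{j/2}\epsilon^2 c_j^2$, and summing over $j\geq k$ with the slow-variation bound $c_j\leq 2^{\delta(j-k)}c_k$ produces a geometric series of size $\lesssim 2^{-k/2}\epsilon^2 c_k^2$. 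Since $c_k\lesssim 1$, both contributions are $\lesssim \epsilon^2 c_k$, hence $\lesssim \epsilon c_k$ for $\epsilon\ll 1$. The exceptional case $k=0$ is handled analogously using $\psi_0=\tilde{\phi}_0$ and the explicit formula for $B_0(\phi,\phi)$ given in the paper.

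The argument has no genuine obstacle: once the decomposition of Lemma~\ref{t:BilinearK} is in hand, the $2^{-k}$ and $2^{-j}$ prefactors more than absorb the $2^{j/2}$ loss coming from the Bernstein step, and the envelope's slow variation delivers summability in both directions. The only point that requires care is the bookkeeping ensuring that the $L^\infty$ loss on low-frequency pieces does not exceed the available decay in the normal-form correction, which is precisely what makes the choice of $B_k$ via a commutator structure useful.
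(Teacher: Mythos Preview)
Your proof is correct and follows essentially the same approach as the paper: reduce to $\tilde{\phi}_k^+$ via the unimodularity of the gauge factor, then bound $B_k(\phi_0,\phi_0)$ using the decomposition of Lemma~\ref{t:BilinearK} together with Bernstein and H\"older. The only cosmetic difference is that the paper bounds the low-frequency factor more directly by $\|\phi_{0,<k}\|_{L^2}\leq\epsilon$ (yielding $2^{-k/2}\epsilon^2 c_k$) rather than invoking slow variation to extract an additional $c_k$, but your version is equally valid.
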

\begin{proof}
Obviously, the $L^2$ norms of $\psi_k^+$ and $\tilde{\phi}_k^+$ are the same, it only suffices to prove the estimate for $\tilde{\phi}_k^+$.
Thus, we need to prove that $L^2$ norm of $\tilde{\phi}_k^+$ is comparable with the $L^2$ norm of
$\phi_k^+$.
The two variables are related via the relation \eqref{NormalFormK}, and we reduce our problem to the study of the $L^2$ bound for the bilinear form $B_k(\phi, \phi)$.
From lemma \ref{t:BilinearK}, we know that
\begin{equation*}
    B_k\left(\phi(0), \phi(0)\right) = 2^{-k}L_k\left(\Bar{\phi}_{<k}(0), \Bar{\phi}_k(0)\right)+ \sum_{j \geq k}2^{-j}L_k\left(\phi_j(0), \phi_j(0)\right).
\end{equation*}
If we apply the Bernstein's inequality,
\begin{equation*}
    \| \phi_{<k}\|_{L^\infty_x}\leq 2^{\frac{k}{2}}\| \phi_{<k}\|_{L^2_x},
\end{equation*}
and the H{\"o}lder's inequality, then we have
\begin{equation*}
    \|2^{-k}L_k\left(\Bar{\phi}_{<k}(0),\Bar{\phi}_k(0)\right)  \|_{L^2}\lesssim 2^{-\frac{k}{2}}\|\phi_{<k}(0) \|_{L^2}\|\phi_k(0) \|_{L^2}\leq 2^{-\frac{k}{2}}c_k\epsilon^2,
\end{equation*}
and 
\begin{equation*}
    \|  \sum_{j \geq k}2^{-j}L_k\left(\phi_j(0), \phi_j(0)\right) \|_{L^2} \lesssim \sum_{j\geq k}2^{-\frac{j}{2}}\cdot \epsilon \|\phi_j(0) \|_{L^2}\lesssim \sum_{j\geq k} 2^{-\frac{j}{2}}\epsilon^2\cdot c_j\lesssim 2^{-\frac{k}{2}}\cdot c_k\epsilon^2.
    \end{equation*}
    This concludes the proof.
\end{proof}
In a similar fashion, we can obtain the bound 
\begin{equation}
    \|B_k(\phi, \phi)\|_{\mathcal{S}} \lesssim 2^{-\frac{k}{2}}\| \phi_{<k}\|_{L_t^\infty L_x^2}\|\phi_k\|_{\mathcal{S}} \lesssim 2^{-\frac{k}{2}}C^2c_k\epsilon^2. \label{BilinearKS}
\end{equation}
\par For next part, we consider the right hand side of the equation \eqref{NonlinearAiry}.

\begin{lemma} \label{t:tildeQ345}
Assume the bootstrap assumptions \eqref{BootstrapOno} and \eqref{Bootstraptwo}, then we have
\begin{equation*}
    \|\tilde{Q}_k^3 \|_{L_t^1 L_x^2}+  \|\tilde{Q}_k^4 \|_{L_t^1 L_x^2}+  \|\tilde{Q}_k^5 \|_{L_t^1 L_x^2} \lesssim C^3\epsilon^3(1+C\epsilon + C^2\epsilon^2).
\end{equation*} 
\end{lemma}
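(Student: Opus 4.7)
The strategy is to apply Lemma~\ref{t:BilinearK} to represent $\tilde Q_k^3, \tilde Q_k^4, \tilde Q_k^5$ as translation-invariant multilinear forms $2^{\alpha_i k} L_k(\bar\phi,\dots,\bar\phi)$ with $\alpha_3 = 1$, $\alpha_4 = 0$, $\alpha_5 = -1$, and then to localize each slot by a paraproduct decomposition $\bar\phi = \sum_j \bar\phi_j$. Since $H$ is bounded on $L^2$ and on every component of $\mathcal S$, each $\bar\phi$ enjoys the same bootstrap bounds as $\phi$. The two highest-frequency inputs will feed the bilinear Strichartz bootstrap \eqref{Bootstraptwo}, the remaining inputs will sit in the $L^4_t L^\infty_x$ component of $\mathcal S$, and the prefactors $2^{\pm k}$ will be absorbed using the frequency constraint on the output.

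For the cubic term we order $j_1\ge j_2\ge j_3$ and note that the frequency localization of the output at $2^k$ forces $j_1 \ge k - O(1)$. Using the trilinear H\"older-type inequality from Section~\ref{s:Definition},
\[
\|L_k(\bar\phi_{j_1},\bar\phi_{j_2},\bar\phi_{j_3})\|_{L^1_tL^2_x} \lesssim \sup_{y\in\mathbb{R}}\|\bar\phi_{j_1} T_y \bar\phi_{j_2}\|_{L^2_{t,x}}\,\|\bar\phi_{j_3}\|_{L^2_tL^\infty_x},
\]
the bilinear factor is $\lesssim 2^{-j_1}C^2\epsilon^2 c_{j_1}c_{j_2}$ by \eqref{Bootstraptwo}, and the second factor is controlled on the bounded interval $[0,1]$ by Bernstein and the Strichartz embedding $\mathcal S \hookrightarrow L^4_t\dot W^{1/4,\infty}$, giving $\|\bar\phi_{j_3}\|_{L^2_tL^\infty_x}\lesssim 2^{-j_3/4}C\epsilon c_{j_3}$. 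The prefactor $2^k$ is absorbed into $2^{-j_1}$ since $j_1\ge k-O(1)$, and the triple sum in $(j_1,j_2,j_3)$ converges in view of the $2^{-j_3/4}$ decay together with the slow-varying property of the envelopes $c_j/c_{j'}\le 2^{\delta|j-j'|}$.

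For $\tilde Q_k^4$ the same scheme applies, with an extra factor placed in $L^4_tL^\infty_x$:
\[
\|L_k(\bar\phi_{j_1},\dots,\bar\phi_{j_4})\|_{L^1_tL^2_x}\lesssim \sup_y\|\bar\phi_{j_1}T_y\bar\phi_{j_2}\|_{L^2_{t,x}}\,\prod_{i=3,4}\|\bar\phi_{j_i}\|_{L^4_tL^\infty_x},
\]
whose H\"older exponents $\tfrac12+\tfrac14+\tfrac14=1$ in time and $\tfrac12+0+0=\tfrac12$ in space balance. For $\tilde Q_k^5$ the prefactor $2^{-k}$ affords one extra Bernstein loss: place four of the factors as in the quartic split and estimate the remaining $\bar\phi_{j_5}$ in $L^\infty_tL^\infty_x$ via Bernstein at cost $2^{j_5/2}\, C\epsilon c_{j_5}$, which is absorbed by $2^{-k}$ since $j_5\le j_1\approx k$. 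Summing produces the respective contributions $\lesssim C^4\epsilon^4$ and $\lesssim C^5\epsilon^5$.

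The only genuinely delicate point is the high-high balanced case $j_1\approx j_2$, in which the bilinear bootstrap requires $O(2^{j_1})$ frequency separation between the two inputs; this is handled by a Whitney-type subdivision of the $2^{j_1}$ frequency shell into $O(1)$ separated subwindows and applying \eqref{Bootstraptwo} to each well-separated pair, the diagonal contributions being recycled at a smaller scale. Combining the three pieces and summing yields the claimed bound $\lesssim C^3\epsilon^3(1+C\epsilon+C^2\epsilon^2)$.
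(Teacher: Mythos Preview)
Your overall strategy---use Lemma~\ref{t:BilinearK} to write $\tilde Q_k^i$ as $2^{\alpha_i k}L_k(\bar\phi,\dots,\bar\phi)$, then pair the two highest-frequency inputs via the bilinear bootstrap \eqref{Bootstraptwo} and place the remaining factors in Strichartz norms---is exactly the scheme the paper uses in the unbalanced cases, and your treatment of $\tilde Q_k^4$, $\tilde Q_k^5$ is fine (the paper shortcuts those by invoking the already-established bound on $B_k(\phi,\phi)$, but your route works too).

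The gap is in the fully balanced cubic case $j_1=j_2=j_3\approx k$. Here the bootstrap \eqref{Bootstraptwo} is only assumed for pairs with $O(2^k)$ frequency separation, and no such separation is forced: for instance three positive frequencies each near $\tfrac{2}{3}\cdot 2^k$ sum to a frequency in the $P_k$ window. Your proposed fix---a Whitney subdivision with ``diagonal contributions recycled at a smaller scale''---does not close. At each stage the off-diagonal pairs cost $\sim 2^{-k}C^2\epsilon^2 c_k^2$ with \emph{no} improvement in the scale parameter, so the iteration produces a divergent sum; and after finitely many steps (the bootstrap needs at least $2^{k-4}$ separation) you are still left with genuinely diagonal pieces that \eqref{Bootstraptwo} does not cover at all. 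The paper handles this case differently: it abandons the bilinear estimate entirely and uses two factors in $L^4_t\dot W^{1/4,\infty}_x$ together with one in $L^\infty_tL^2_x$, obtaining
\[
\|\tilde Q_k^3(\phi_{k_1},\phi_{k_2},\phi_{k_3})\|_{L^2_tL^2_x}\lesssim 2^k\cdot 2^{-\frac{k_2+k_3}{4}}\|\phi_{k_1}\|_{L^\infty_tL^2_x}\|\phi_{k_2}\|_{L^4_t\dot W^{1/4,\infty}}\|\phi_{k_3}\|_{L^4_t\dot W^{1/4,\infty}}\lesssim 2^{-k/4}C^3\epsilon^3 c_k^3,
\]
which requires no frequency separation. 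Replacing your Whitney paragraph by this direct Strichartz argument in the balanced case completes the proof.
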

\begin{proof}
First for $\tilde{Q}_k^3$ terms, let us assume that the first entry of $L_k$ is localized at frequency $2^{k_1}$, the second at frequency $2^{k_2}$, and finally the third one is at frequency $2^{k_3}$, with $k_1\leq k_2 \leq k_3$. 
As the output is at frequency $2^k$, there are three possible cases:
\begin{itemize}
    \item If $2^{k_1} = 2^{k_2} = 2^{k_3}>2^k$, then we have $\phi_{k_1}$ and $\phi_{k_3}$ must have at least a $2^{k_3-4}$ frequency separation.
    We use the bilinear Strichartz estimate for the $2^{k_3-4}$ frequency separated solutions, and the Strichartz inequality for the remaining term
    \begin{align*}
    \|\tilde{Q}_k^3(\phi_{k_1}, \phi_{k_2},\phi_{k_3})\|_{L_t^{\frac 4 3}L_x^2} &\lesssim 2^{-\frac{k_2}{4}}\| L(\phi_{k_1}, \phi_{k_3})\|_{L_{t,x}^2}\| \phi_{k_2}\|_{L_t^4\dot{W}_x^{\frac{1}{4},\infty}} \\
    &\lesssim 2^{-k_3 -\frac{k_2}{4}} C^3\epsilon^3 c_{k_1} c_{k_2} c_{k_3} \lesssim 2^{-k}C^3c_k^3\epsilon^3.
    \end{align*}
    \item If $2^{k}<2^{k_1} < 2^{k_2} < 2^{k_3}$, the estimate is similar to the first case.
    \item If $2^{k_1}=2^{k_2}=2^{k_3} \approx 2^{k}$,  we use directly the Strichartz estimate
    \begin{align*}
    \|\tilde{Q}_k^3(\phi_{k_1}, \phi_{k_2},\phi_{k_3})\|_{L_t^2L_x^2} &\lesssim 2^{-\frac{k_2+k_3}{4}} \|\phi_{k_1} \|_{L_t^\infty L_x^2} \| \phi_{k_2}\|_{L_t^4\dot{W}_x^{\frac{1}{4},\infty}}\cdot \| \phi_{k_3}\|_{L_t^4\dot{W}_x^{\frac{1}{4},\infty}} \\
    &\lesssim C^3\epsilon^3c_{k_1}c_{k_2}c_{k_3}2^{-\frac{k_2+k_3}{4}} \lesssim 2^{-\frac{k}{4}}C^3c_k^3\epsilon^3.
    \end{align*}
\end{itemize}

\par Then we consider $\tilde{Q}_k^4$ terms.
For $Q_k^4$ terms
\begin{equation*}
\begin{aligned}
    \|Q_k^4 \|_{L_t^2L_x^2} \lesssim & 2^{-k}\|L_k\left(P_{<k}(\partial_x\Bar{\phi}^3),\phi_k\right) \|_{L_t^2L_x^2}+ 2^{-k}\|L_k\left(P_{<k}(\phi),P_k\partial_x\Bar{\phi}^3\right) \|_{L_t^2L_x^2} \\ 
    &+\sum_{j\geq k}2^{-j}\|L_k\left(P_j\partial_x(\phi^3),\phi_j\right) \|_{L_t^2L_x^2} \lesssim c_kC^4\epsilon^4.
    \end{aligned}
\end{equation*}
For next term of $\tilde{Q}_k^4$, they can be estimated directly if we apply the $L_t^4\dot{W}_x^{\frac{1}{4},\infty}$ norm for $\phi_{<k}$ and $L_t^\infty L_x^2$ norm for $\phi_k^+$. The remaining four terms can be treated in a similar way.
\par $\tilde{Q}_k^5$ terms can be estimated directly.
\begin{equation*}
    \|\tilde{Q}_k^5 \|_{L_t^{\frac{4}{3}} L_x^2}\lesssim \|B_k(\phi,\phi) \|_{L_t^\infty L_x^2}\|\phi_{<k} \|^3_{L_t^4\dot{W}_x^{\frac{1}{4},\infty}}\lesssim C^5c_k\epsilon^5.
\end{equation*}
\end{proof}
Then combining Lemma \ref{t:psik} and Lemma \ref{t:tildeQ345}, we have
\begin{equation*}
    \|\psi_k^+ \|_{\mathcal{S}} \lesssim \|\psi_k^+(0) \|_{L^2}+\|\tilde{Q}_k^3+\tilde{Q}_k^4+\tilde{Q}_k^5 \|_{L_t^1 L_x^2}\lesssim c_k\epsilon(1+C^2\epsilon^2).
\end{equation*}
Together with the bound for $\|B_k(\phi, \phi) \|_{\mathcal{S}}$ \eqref{BilinearKS}, we obtain the Strichartz component of the bootstrap argument.

\par We then consider the bilinear estimate in our bootstrap argument.
According to the bilinear Strichartz estimate for Airy-type equation,
if we define truncation operators $\tilde{P}_j$ and $\tilde{P}_k$ which
    still have $2^{max\{j,k\}}$ separated supports but whose symbols are identically $1$ in the support of $P_j$ , respectively $P_k$, then by Corollary \ref{t:InhomoBound},
\begin{equation*}
    \|\tilde{P}_j\psi_j^+ \cdot \tilde{P}_k \psi_k^+ \|_{L^2_{t,x}}\lesssim \epsilon^2 2^{-max\{j,k \}}c_j c_k(\epsilon^2 +C^6\epsilon^6).
\end{equation*}
At last, we need to transfer the bound to $\phi_j^+ \phi_k^+$.
\begin{equation*}
 \tilde{P}_j\psi_j^+ \cdot \tilde{P}_k\psi_k^+ - \phi_j^+ e^{-i\Phi_{<j}}\phi_k^+ e^{-i\Phi_{<k}}  = \tilde{P}_j\psi_j^+(\tilde{P}_k\psi_k^+ - \phi_k^+e^{-i\Phi_{<k}})+(\tilde{P}_j\psi_j^+ - \phi_j^+ e^{-i\Phi_{<j}})\phi_k^+ e^{-i\Phi_{<k}}.
\end{equation*}
For the first component, note that
\begin{equation}
    \|\psi_k^+ - \phi_k^+e^{-i\Phi_{<k}}\|_{L_t^4\dot{W}_x^{\frac{1}{4},\infty}} \lesssim 2^{-\frac{k}{2}}\|\phi_k \|_S\|\phi_{<k} \|_{L_t^\infty L_x^2}  +\sum_{j\geq k} 2^{-\frac{j}{2}}\|\phi_j \|_S\|\phi_{<j} \|_{L_t^\infty L_x^2}\lesssim 2^{-\frac{k}{2}}C^2\epsilon^2 c_k, \label{FirstComponent}
\end{equation}
\begin{equation*}
 \|\tilde{P}_j\psi_j^+(\tilde{P}_k\psi_k^+ - \phi_k^+e^{-i\Phi_{<k}}) \|_{L_t^\infty L_x^2}\lesssim 2^{-\frac{k}{4}}\| \psi_j^+\|_{L_t^\infty L_x^2} \|\psi_k^+ - \phi_k^+e^{-i\Phi_{<k}} \|_{L_t^4\dot{W}_x^{\frac{1}{4},\infty}}\lesssim 2^{-\frac{3k}{4}}C^3\epsilon^3c_j c_k.
\end{equation*}
For the second term, we rewrite it in a commutator structure, and use the property of commutator \eqref{commutator}.
\begin{equation*}
\begin{aligned}
\tilde{P}_j\psi_j^+ - \phi_j^+e^{-i\Phi_{<j}} & = (\tilde{P}_j-1)(\tilde{\phi}_j^+ e^{-i\Phi_{<j}}) +B_j(\phi, \phi)e^{-i\Phi_{<j}} \\
& = [\tilde{P}_j - 1, e^{-i\Phi_{<j}}]\phi_j^+ +(\tilde{P}_j -1)\left(B_j(\phi, \phi)e^{-i\Phi_{<j}}\right) -e^{-i\Phi_{<j}}\tilde{P}_jB_j(\phi, \phi) \\
 & = 2^{-j}L(\partial_{x}e^{-i\Phi_{<j}}, \phi_j^+)+L\left(B_j(\phi, \phi), e^{-i\Phi_{<j}}\right) \\
 & = 2^{-j}L(\phi_{<j}, \phi_j, e^{-i\Phi_{<j}})+ \sum_{l>j}2^{-l}L(\phi_l, \phi_l, e^{-i\Phi_{<j}}).
\end{aligned}
\end{equation*}
Thus we use the bilinear estimate and Bernstein's inequality to get
\begin{equation*}
    \begin{aligned}
\|(\tilde{P}_j\psi_j^+ - \phi_j^+e^{-i\Phi_{<j}})\phi_k^+ \|_{L_{t,x}^2} &\lesssim  2^{-j}\|L(\phi_{k}, \phi_j) \|_{L_t^2L_x^2}\|\phi_{<j}^+ \|_{L_{t,x}^\infty} + \sum_{l>j}2^{-l}\|L(\phi_l, \phi_k) \|_{L_t^2L_x^2}\|\phi_l \|_{L_{t,x}^\infty} \\
 \lesssim & 2^{-j-max\{j,k\}}C^2\epsilon^2c_jc_k2^{\frac{j}{2}}C\epsilon + \sum_{l>j}2^{-l-max\{l,k\}}C^3\epsilon^3 c_l^22^{\frac{l}{2}}c_k \lesssim 2^{-max\{j,k\}}c_j^2 c_kC^3\epsilon^3,
    \end{aligned}
\end{equation*}
which is a small error term.
In this way, we complete the bootstrap argument.

\section{Linearized third order Benjamin Ono equation} \label{s:LinearizedHBO}
The linearized equation of \eqref{HBO} is 
\begin{equation}
    v_t - v_{xxx} + \frac{3}{2}\phi\phi_x v + \frac{3}{4}\phi^2 v_x +\frac{3}{4}v_x H\phi_x+\frac{3}{4}\phi_x Hv_x + \frac{3}{4}v H\phi_{xx}+\frac{3}{4}\phi Hv_{xx}+\frac{3}{4}H(v_{xx}\phi + \phi_{xx} v + 2v_x\phi_x)=0. \label{LHBO}
\end{equation}
Understanding the properties of the linearized flow is critical for any local well-posedness
result.
In this section, we show the following result:
\begin{proposition}
Let $\phi$ be a $H^3$ solution to the third order Benjamin-Ono equation in $[0,1]$ with small $H^{\frac{1}{2}}$ norm. Then the linearized equation \eqref{LHBO} is well-posed in $H^{-\frac{1}{2}}$, and satisfies the inequality
\begin{equation}
    \|v \|_{C(0,1; H^{-\frac{1}{2}})} \lesssim \| v_0\|_{H^{-\frac{1}{2}}}. \label{e:LinearizedBound}
\end{equation}
with a universal implicit constant that does not depend on the $H^3$ norm of $\phi$. \label{t:LinearizedBound}
\end{proposition}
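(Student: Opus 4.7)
The plan is to mirror the normal form and gauge transform architecture developed in Section~\ref{s:NormalForm} for the nonlinear equation, but adapted to the linearized flow with an asymmetric bilinear normal form $\tilde{B}_k(\phi,v)$ in place of $B_k(\phi,\phi)$. As a first step I would apply the projector $P_k^{+}$ (and symmetrically $P_k^{-}$) to \eqref{LHBO} and reorganize the resulting expression so that each high-high-to-high quadratic $\phi\cdot v$ interaction is written, via commutator identities, as a paradifferential term $\phi_{<k}\partial_x^a v_k^{+}$ (for $a=1,2$) plus a residual supported away from the dangerous frequency configuration. This brings the equation into the form
\begin{equation*}
A_{TBO}^{k,+} v_k^{+} = R_k^{2}(\phi,v) + R_k^{3}(\phi,\phi,v),
\end{equation*}
where $A_{TBO}^{k,+}$ is precisely the operator defined in \eqref{DefAHOB}, and $R_k^2$ collects the non-paradifferential quadratic terms.

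Next I would design a bilinear symbol $\tilde B_k(\phi,v)$, completely analogous to \eqref{BilinearK} but linear in $v$ and with the appropriate asymmetric commutator rewriting, so that $\tilde{v}_k^{+} := v_k^{+} + \tilde B_k(\phi,v)$ removes as many of the quadratic contributions in $R_k^2$ as possible. As flagged in the excerpt, the crucial point is that certain terms in $R_k^2$ in which the derivative lands on the low-frequency $v$-factor produce normal form corrections of the schematic shape $2^{-k}L_k(\phi_k,\partial_x^{-1}v_{<k})$, which are unbounded at the $H^{-1/2}$ level for $v$; these must be left on the right-hand side and estimated directly using the bilinear Strichartz bounds in Corollary~\ref{t:InhomoBound} together with the Strichartz control on $\phi$ from Theorem~\ref{Theorem4}. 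After this partial normal form, I would then apply the same gauge transform $\omega_k^{+} := \tilde v_k^{+}e^{-i\Phi_{<k}}$ used in \eqref{GaugeTransform}, where $\Phi$ is determined by \eqref{phirelation}. Since $\Phi$ depends only on $\phi$, the gauge factor is common to the nonlinear and linearized problem, and by the same algebra as in Section~\ref{s:NormalForm} it eliminates the remaining paradifferential quadratic piece and converts the equation into a linear Airy-type equation
\begin{equation*}
(\partial_t - \partial_x^3)\omega_k^{+} = \bigl[\tilde R_k^{2}(\phi,v) + \tilde R_k^{3}(\phi,\phi,v) + \tilde R_k^{4}(\phi,\phi,\phi,v)\bigr]e^{-i\Phi_{<k}},
\end{equation*}
where $\tilde R_k^2$ contains only the unavoidable singular quadratic residues.

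To close the estimate I would run a frequency-envelope bootstrap mirroring Theorem~\ref{Theorem4} but at regularity $H^{-1/2}$: pick an $H^{-1/2}$ envelope $d_k$ for $v_0$, postulate $\|v_k\|_{\mathcal S}\le C 2^{-k/2}d_k$ together with a bilinear bootstrap of the form $\|v_jT_y\phi_k\|_{L^2}\lesssim C\epsilon\,2^{-\max\{j,k\}}2^{-j/2}d_j c_k$, and improve these constants. The $L^2$ comparability $\|\omega_k^{+}\|_{L^2}\sim\|v_k^{+}\|_{L^2}$ at $H^{-1/2}$ scaling is handled exactly as in Lemma~\ref{t:psik}, using Lemma~\ref{t:BilinearK}-type representations and Bernstein together with the smallness of $\phi$ in $H^{1/2}$. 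The cubic and quartic sources $\tilde R_k^{3,4}$ are estimated by two Strichartz factors on $\phi$ and one $L^\infty_tL^2_x$ factor on $v$, as in Lemma~\ref{t:tildeQ345}. The main difficulty, and the only genuinely new analytic content, is controlling the leftover quadratic piece $\tilde R_k^2(\phi,v)$: each term must be split by paraproduct decomposition so that the high-frequency factor sits in a bilinear Strichartz pairing with another high-frequency factor (either coming from $\phi$ or from $v$ on a different dyadic block), giving the decisive $2^{-\max}$ gain needed to match the $H^{-1/2}$ weight.

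After the bootstrap closes one obtains $\|v\|_{C([0,1];H^{-1/2})}\lesssim\|v_0\|_{H^{-1/2}}$ with an implicit constant depending only on the smallness constant in Theorem~\ref{Theorem4} and not on higher Sobolev norms of $\phi$; uniqueness in $H^{-1/2}$ and continuity in time then follow from the fact that the linear flow has been conjugated to an inhomogeneous Airy equation with perturbatively small source. The hard step, to reiterate, is item (ii) of the procedure: identifying precisely which quadratic interactions admit a bounded normal form correction in the asymmetric $(\phi,v)$ setting and which must be kept as source terms, and then ensuring that the kept terms admit enough frequency separation to be bounded by Corollary~\ref{t:InhomoBound} without loss of derivatives on $v$.
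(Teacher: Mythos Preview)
Your architecture---paradifferential rewriting via $A_{TBO}^{k,+}$, an asymmetric partial normal form $\tilde B_k(\phi,v)$ that leaves certain low-frequency-in-$v$ quadratic terms as source, the gauge transform $e^{-i\Phi_{<k}}$, and a frequency-envelope bootstrap---is exactly the paper's route. Two concrete corrections are needed.

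First, your envelope exponents are inverted. For an $H^{-1/2}$ frequency envelope $d_k$ the dyadic control is $\|P_k v_0\|_{L^2}\le 2^{k/2}d_k$ (recall the paper's convention $\|P_k\phi\|_{L^2}\le 2^{-ks}c_k$, here $s=-\tfrac12$). Hence the bootstrap hypotheses should read $\|v_k\|_{\mathcal S}\lesssim C\,2^{k/2}d_k$ and $\|L(v_j,\phi_k)\|_{L^2}\lesssim C\epsilon\,d_jc_k\,2^{j/2}2^{-\max\{j,k\}}$, not $2^{-k/2}$ and $2^{-j/2}$. With your signs you would in effect be asserting an $H^{1/2}$ bound on $v$, which is not what is claimed and would not close: the leftover quadratic source $Q_k^{2,\mathrm{lin}}(v_0,\phi_k)=-\tfrac{3}{2}i(\partial_x v_0\,\partial_x\phi_k^+ + v_0\,\partial_{xx}\phi_k^+)$ is only controlled at the $2^{k/2}$ scale via the bilinear $L^2$ estimate.

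Second, you omit the step that furnishes a solution on $[0,1]$ in the first place. The paper uses the $H^3$ regularity of $\phi$ to obtain $L^2$ (and then $H^1$) well-posedness of \eqref{LHBO} by a direct energy argument---multiplying by $v$, integrating by parts, and handling the dangerous term $\int\phi_x Hv_x\cdot v\,dx$ through the identity $Hv\cdot v_x=-vHv_x+H(v_xv)-H(Hv_xHv)$, together with the dual argument for the adjoint equation. Only once these $H^1$ solutions exist does the bootstrap show they satisfy \eqref{e:LinearizedBound} with a constant independent of $\|\phi\|_{H^3}$. Your assertion that well-posedness follows because ``the linear flow has been conjugated to an inhomogeneous Airy equation'' is circular: the bootstrap presupposes a solution to estimate.
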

The rest of the section is devoted to the proof of this proposition. 
\par First, we show that the $L^2$ norm of this linearized equation can be controlled by the initial data given that $\phi$ is a $H^3$ solution of \eqref{HBO}. 
Multiplying by $v$, and using the integration by part tricks and the properties of Hilbert transform \eqref{HilbertOne} \eqref{HilbertTwo}, we see that
term like $vv_{xxx}$ vanishes.
\begin{equation*}
\begin{aligned}
    \frac{1}{2}\cdot\frac{d}{dt}\|v \|^2_{L^2} &= -\frac{3}{4}\int [\partial_x(\phi^2)v^2  +\frac{1}{2}\phi^2 \partial_x(v^2)-vv_xH\phi_x- \phi_xvHv_x- v^2H\phi_{xx}] dx \\ &\lesssim (\|\phi_{xx} \|_{L^\infty} + \|\partial_x(\phi^2) \|_{L^\infty})\|v \|_{L^2}^2.
\end{aligned}
\end{equation*}
$\| \phi_{xx}\|_{L^\infty}$ can be bounded by $\|\phi \|_{H^3}$ by the Sobolev embedding.
Here, the most difficult term is $\phi_xHv\cdot v_x$.
Note that according to the convolution identity of Hilbert transform \eqref{HilbertThree},
\begin{equation*}
Hv\cdot v_x = -vHv_x+H[v_x\cdot v]-H[Hv_x\cdot Hv].
\end{equation*}
As a consequence,
\begin{equation*}
\begin{aligned}
    \int \phi_x\cdot Hv_x\cdot v dx = -\int \phi_{xx}Hv\cdot v dx -\int \phi_x Hv\cdot v_x dx  \\
    = -\int \phi_{xx}Hv\cdot v dx -\frac{1}{2}\int \phi_x(H[v_x\cdot v]-H[Hv_x\cdot Hv])dx.
\end{aligned}
\end{equation*}
These last three terms is easy to handle. By Gronwall's inequality,
\begin{equation*}
    \|v \|_{L_t^\infty L_x^2}\lesssim \|v_0 \|_{L_x^2},
\end{equation*}
which shows the uniqueness of $v$ in $L^2$.
For the backward adjoint linearized equation
\begin{equation*}
\partial_t w - w_{xxx} -\frac{3}{2}\phi\phi_x w + \frac{3}{4}(\phi^2w)_x + \frac{3}{4}w_xH\phi_x +\frac{3}{4}H(w_x\phi)_x+\frac{3}{4}Hw_{xx}\phi = 0,
\end{equation*}
with initial data
\begin{equation*}
    w(1) = w_1,
\end{equation*}
we similarly have
\begin{equation*}
    \| w\|_{L^\infty_t L_x^2} \lesssim \|w_1 \|_{L_x^2},
\end{equation*}
which proves the existence for the direct problem.
Following the method in \cite{MR3948114} by consider a equation of $v_1 = v_x$, we have that the linearized equation \eqref{LHBO} is well-posed in $H^1$.
\par Next, in order to prove Proposition \ref{t:LinearizedBound}, it suffices to show that the $H^1$
solutions $v$ constructed above satisfy the bound \eqref{e:LinearizedBound}. 
Suppose for initial condition,  $\| v(0)\|_{H^{-\frac{1}{2}}} \leq 1$, and consider a frequency envelop $\{ d_k\}_{k = 0}^\infty$ for $v(0)$ in $H^{-\frac{1}{2}}$ with the property $c_k \lesssim d_k$, where $c_k$ represents an $L^2$ frequency envelope for $\phi(0)$ as in the previous section.
Then the initial condition is strengthen to
\begin{equation*}
    \|v_k\|_{L^2} \leq 2^{\frac{k}{2}}d_k.
\end{equation*}
We aim to prove that the dyadic pieces $v_k$ of $v$ satisfy the estimates
\begin{equation*}
    \| v_k\|_{\mathcal{S}} \lesssim 2^{\frac{k}{2}}d_k,
\end{equation*}
as well as the bilinear $L^2$ estimates
\begin{equation*}
    \|L(v_j, \phi_k) \|_{L^2_{t,x}}\lesssim \epsilon d_j c_k 2^{\frac{j}{2}}\cdot 2^{-max\{j,k \}}.
\end{equation*}
To start with, we will use the bootstrap argument,  assuming 
\begin{gather}
    \| v_k\|_{\mathcal{S}} \lesssim C2^{\frac{k}{2}}d_k ,\label{AssumptionOne}\\
    \|L(v_j, \phi_k) \|_{L^2_{t,x}}\lesssim \epsilon Cd_j c_k 2^{\frac{j}{2}}\cdot 2^{-max\{j,k \}}. \label{AssumptionTwo}
\end{gather}
and prove that
\begin{gather}
\| v_k\|_{\mathcal{S}} \lesssim (1+\epsilon C)2^{\frac{k}{2}}d_k , \label{BootstrapOne}\\
    \|L(v_j, \phi_k) \|_{L^2_{t,x}}\lesssim \epsilon (1+\epsilon C) d_j c_k 2^{\frac{j}{2}}\cdot 2^{-max\{j,k \}}. \label{BootstrapTwo}
\end{gather}
We consider each frequency component, and proceed in the same manner as for the nonlinear equation, rewriting the linearized equation in paradifferential form as
\begin{equation}
\begin{aligned}
    A_{HBO}^{k,+}v_k^+ &= -\frac{3}{2}P_k^+(\phi\phi_x v) - \frac{3}{4}P_k^+(\phi^2 v_x) -\frac{3}{4}P_k^+(v_x H\phi_x)-\frac{3}{4}P_k^+(\phi_x Hv_x) \nonumber \\
    &\quad -\frac{3}{4}P_k^+(v H\phi_{xx})
    -\frac{3}{4}P_k^+(\phi Hv_{xx}) 
    +\frac{3}{4}iP_k^+(v_{xx}\phi + \phi_{xx} v + 2v_x\phi_x) \nonumber \\
    &\quad - \frac{3}{2}i \partial_x \phi_{<k}v_x - \frac{3}{2}i  \phi_{<k}v_{xx}.
\end{aligned}
\end{equation}
We then apply the normal form transformation to eliminate quadratic terms on the right.
The first 5 quadratic terms are eliminated by
\begin{equation}
     \frac{1}{4}[iP_k^+(\phi\cdot \partial_x^{-1}v) +iP_k^+(v\cdot \partial_x^{-1}\phi)-P_k^+(H\phi\cdot\partial_x^{-1}v)-P_k^+(Hv\cdot\partial_x^{-1}\phi)].\label{BklinOne}
\end{equation}
Then the last 2 terms are eliminated by 
\begin{equation}
    \frac{1}{2}i\partial_x^{-1}(P_{<k}\phi)P_k^+v. \label{BklinTwo}
\end{equation}
Note that if we add
\begin{equation*}
    \frac{1}{2}i\partial_x^{-1}(P_{<k}v)P_k^+\phi
\end{equation*}
to \eqref{BklinOne} and \eqref{BklinTwo}, then it becomes exactly $2B_k(v,\phi)$, where $B_k$ stands for the symmetric bilinear form associated to the quadratic form $B_k$ defined
in \eqref{BilinearK}.
Since $\partial_x^{-1}v$ is ill-defined at low frequency, we would exclude the frequencies $< 1$. 
Hence, our quadratic normal form is chosen to be
\begin{equation*}
    B_k^{lin}(\phi, v)= 2B_k(v,\phi) -  \frac{1}{2}i\partial_x^{-1}v_{(0,k)}P_k^+\phi .
\end{equation*}
In this way, define
\begin{equation*}
    \tilde{v}_k^+ = v_k^+ + B_k^{lin}(\phi, v).
\end{equation*}
Then equation for $\tilde{v}_k^+$ has the form
\begin{equation*}
    A_{HBO}^{k,+}\tilde{v}_k^+ = Q_k^{2,lin}(v_0, \phi_k) +Q_k^{3,lin}(\phi,\phi,v)+Q_k^{4,lin}(\phi,\phi,\phi,v).
\end{equation*}
Calculations show that
\begin{equation*}
\begin{aligned}
    Q_k^{2,lin}(v_0, \phi_k) &= - \frac{3}{2}i (\partial_x v_0\partial_x\phi_k^+ +v_0\partial_{xx}\phi_k^+ ),\\
    &\\
     Q_k^{3,lin}(\phi, \phi, v) &= -\frac{3}{4}P_k^+(\phi^2 v)_x+\frac{3}{4}B_k^{lin}\left((\phi H\phi_x)_x,v\right) +\frac{3}{4}B_k^{lin}(H(\phi \phi_x)_x,v)  \\ 
     &\quad +\frac{3}{4}B_k^{lin}(\phi, (vH\phi_x)_x)+\frac{3}{4}B_k^{lin}\left(\phi, (\phi Hv_x)_x\right)+\frac{3}{4}B_k^{lin}\left(\phi,H(v\phi)_{xx}\right)   \\
     &\quad +\frac{3}{2}i\partial_x \phi_{<k}\partial_xB_k^{lin}(\phi,v) - \frac{3}{2}i\phi_{<k}\partial_x^2B_k^{lin}(\phi, v), \\
     & \\
     Q_k^{4,lin}(\phi,\phi,\phi,v) &= \frac{1}{4}B_k^{lin}\left((\phi^3)_x,v\right)+\frac{3}{4}B_k^{lin}\left(\phi, (\phi^2 v)_x\right).
\end{aligned}
\end{equation*}
Again there is a straightforward adjustment in this analysis for the case $k =0$, following the model in Section~\ref{s:NormalForm}. 
This adds a trivial low frequency quadratic term on the right.
\par Finally, for $k>0$, we perform renomalization of $\tilde{v}_k^+$ by
\begin{equation*}
    w_k^+ := e^{-i\Phi_{<k}}\tilde{v}_k^+,
\end{equation*}
which in turn solves the inhomogeneous Airy-type equation
\begin{equation}
    (\partial_t -\partial_x^3)w_k^+ = [Q_k^{2,lin}(v_0, \phi_k^+)+ \tilde{Q}_k^{3,lin}(\phi, \phi, v)+\tilde{Q}_k^{4,lin}(\phi,\phi,\phi,v)+\tilde{Q}_k^{5,lin}(\phi,\phi,\phi,\phi,v)]e^{-i\Phi_{<k}}, \label{wkEqn}
\end{equation}
where
\begin{equation*}
\begin{aligned}
     \tilde{Q}_k^{3,lin}(\phi, \phi, v) &=  Q_k^{3,lin}(\phi, \phi, v)+ \frac{3}{4}(2i-1)\phi_{<k}^2\partial_x v_k^+ +\frac{3}{4}(\partial_x\phi_{<k})^2 v_k^+ -\frac{3}{8}iP_{<k}\big(\phi H\phi_x+H(\phi\phi_x)\big)v_k^+ ,\\
   & \\
   \tilde{Q}_k^{4,lin}(\phi, \phi, \phi, v) &= Q_k^{4,lin}(\phi, \phi, \phi, v)+\frac{3}{8}(i+2)\phi_{<k}^3v_k^+  -\frac{3}{8}iP_{<k}(\phi^3)v_k^+ + \frac{3}{4}(2i-1)\phi_{<k}^2\partial_xB_k^{lin}(\phi,v)\\
   &\qquad +\frac{3}{4}(\partial_x\phi_{<k})^2 B_k^{lin}(\phi, v) -\frac{3}{8}iP_{<k}\big(\phi H\phi_x + H(\phi \phi_x)\big)B_k^{lin}(\phi, v),\\
   & \\
   \tilde{Q}_k^{5,lin}(\phi, \phi, \phi, \phi,v) &= \frac{3}{8}(i+2)\phi_{<k}^3B_k^{lin}(\phi, v) -\frac{3}{8}iP_{<k}(\phi^3)B_k^{lin}(\phi, v) .
\end{aligned}
\end{equation*}
Clearly, $B_k^{lin}(\phi ,v)$ can be written as
\begin{equation}
  B_k^{lin}(\phi ,v) = 2^{-k}L(\phi_{<k}, v_k)+ \sum_{j\in (0,k)}2^{-j}L(\phi_k, v_j) + \sum_{j\geq k}2^{-j}L(\phi_j, v_j).   \label{BklinExpansion}
\end{equation}

Our goal is now to estimate the initial data for $w_k$ in $L^2$ and the inhomogeneous term in $L^1_tL_x^2$.
We begin with the initial data, for which we have
\begin{lemma}
The initial data $w_k(0)$ satisfies
\begin{equation}
    \| w_k^+(0)\|_{L^2}\lesssim 2^{\frac{k}{2}}d_k. \label{wk0Estimate}
\end{equation}
\end{lemma}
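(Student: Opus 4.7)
The plan is first to reduce the estimate to a bound on the quadratic correction $B_k^{lin}$. Since $\Phi$ is real-valued, $|e^{-i\Phi_{<k}}|=1$ and so $\|w_k^+(0)\|_{L^2}=\|\tilde v_k^+(0)\|_{L^2}$. By the defining relation $\tilde v_k^+=v_k^++B_k^{lin}(\phi,v)$ and the triangle inequality, the task reduces to showing that $\|v_k^+(0)\|_{L^2}\lesssim 2^{k/2}d_k$, which is immediate from the strengthened envelope hypothesis $\|v_k(0)\|_{L^2}\leq 2^{k/2}d_k$ and the boundedness of $P_k^+$, and separately that
\[
\|B_k^{lin}(\phi(0),v(0))\|_{L^2}\lesssim \epsilon\,2^{k/2}d_k,
\]
which is the only nontrivial part.

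I would prove this in parallel with Lemma~\ref{t:psik}, via the trichotomy for $B_k^{lin}$ furnished by \eqref{BklinExpansion}. The ingredients are Bernstein's inequality, the slow-varying property of the envelopes $c_k$ and $d_k$, the compatibility $c_k\lesssim d_k$, and the initial bounds $\|\phi_k(0)\|_{L^2}\lesssim \epsilon c_k$, $\|v_k(0)\|_{L^2}\leq 2^{k/2}d_k$. For the low-high piece $2^{-k}L(\phi_{<k},v_k)$ I would place $\phi_{<k}$ in $L^\infty_x$ (Bernstein plus slow-varying of $c$ gives $\|\phi_{<k}\|_{L^\infty_x}\lesssim \epsilon 2^{k/2}c_k$) and $v_k$ in $L^2_x$, producing a contribution of order $\epsilon c_k d_k$. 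For the high-low piece $\sum_{0<j<k}2^{-j}L(\phi_k,v_j)$ I would place $\phi_k$ in $L^\infty_x$ and $v_j$ in $L^2_x$, reducing the $j$-sum to $\epsilon 2^{k/2}c_k \sum_j 2^{-j/2}d_j$, which is controlled by $\epsilon 2^{k/2}c_k$ via Cauchy--Schwarz together with $\|d\|_{l^2}\lesssim 1$.

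The one step requiring genuine care, and thus the main obstacle, is the high-high piece $\sum_{j\geq k}2^{-j}L(\phi_j,v_j)$, because the individual summands are not frequency-localized at $2^k$ and a direct H\"older estimate loses too much. The way through is to exploit the implicit $P_k$ projection built into the $L_k$ notation and to use Bernstein's inequality in the $L^1\to L^2$ direction, $\|P_kf\|_{L^2}\lesssim 2^{k/2}\|f\|_{L^1}$, combined with $\|\phi_j v_j\|_{L^1}\leq \|\phi_j\|_{L^2}\|v_j\|_{L^2}$, giving $\|P_k L(\phi_j,v_j)\|_{L^2}\lesssim \epsilon\, 2^{k/2} c_j\cdot 2^{j/2}d_j$. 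Summing in $j\geq k$ produces $\epsilon\, 2^{k/2}\sum_{j\geq k}2^{-j/2}c_j d_j$; exploiting slow-varying of $c$ and $d$ in the upward direction (with the small parameter $\delta<1/4$) makes this sum geometric and gives the desired bound $\epsilon c_k d_k\lesssim \epsilon 2^{k/2}d_k$. Combining the three pieces concludes the proof of \eqref{wk0Estimate} with the extra $\epsilon$ gain that will later feed the bootstrap.
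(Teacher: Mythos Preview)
Your proof is correct and follows essentially the same route as the paper: reduce to bounding $B_k^{lin}(\phi(0),v(0))$ via the gauge and normal-form relations, use the three-piece decomposition \eqref{BklinExpansion}, and handle each piece by Bernstein plus H\"older. The only real variation is in the high--high piece $\sum_{j\geq k}2^{-j}L(\phi_j,v_j)$: the paper places $\phi_j$ in $L^\infty$ via $L^2\to L^\infty$ Bernstein, whereas you exploit the output frequency localization and use $L^1\to L^2$ Bernstein; your version makes the geometric summation in $j$ cleaner and avoids the somewhat loose step $\sum_{j\geq k}d_j\lesssim d_k$ in the paper's write-up.
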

\begin{proof}
It suffices to prove the same estimate for $\tilde{v}_k^+$, which in turn reduced to establish the bound for $B_k^{lin}\left(\phi(0), v(0)\right)$.
For three components in \eqref{BklinExpansion}, 
\begin{gather*}
    \|2^{-k}L_k\left(\phi_{<k}(0),v_k(0)\right)  \|_{L^2}\leq 2^{-\frac{k}{2}}\|\phi(0) \|_{L^2}\|v_k(0) \|_{L^2}\lesssim \epsilon d_k, \\
    \|  \sum_{j \geq k}2^{-j}L_k\left(\phi_j(0), v_j(0)\right) \|_{L^2} \lesssim \sum_{j\geq k}2^{-\frac{j}{2}}\cdot \epsilon \|v_j(0) \|_{L^2}\lesssim \sum_{j\geq k} \epsilon\cdot d_j\lesssim \epsilon d_k, \\
    \| \sum_{j\in (0,k)}2^{-j}L(\phi_k, v_j)\|_{L^2_x} \lesssim \sum_{j\in (0,k)}2^{-\frac{j}{2}}\| \phi_k\|_{L_x^2}\|v_j \|_{L_x^2} \lesssim \epsilon d_k.
    \end{gather*}
Here we use the slow varying property of frequency envelope. 
\end{proof}

\par As is discussed in Section~\ref{s:NormalForm}, we consider the inhomogeneous term:
\begin{lemma}
The inhomogeneous terms in the $w_k^+$ equation \eqref{wkEqn} satisfy 
\begin{equation*}
    \|Q^{2,lin}_k \|_{L_t^1L_x^2}+ \|\tilde{Q}^{3,lin}_k \|_{L_t^1L_x^2}+\|\tilde{Q}^{4,lin}_k \|_{L_t^1L_x^2} +\|\tilde{Q}^{5,lin}_k \|_{L_t^1L_x^2}\lesssim 2^{\frac{k}{2}}C\epsilon d_k.
\end{equation*}
\end{lemma}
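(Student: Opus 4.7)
The overall strategy is to mirror the argument of Lemma~\ref{t:tildeQ345} in the linearized setting, where exactly one of the multilinear factors is $v$ (or the bilinear $B_k^{lin}(\phi,v)$) rather than $\phi$. As a preparatory step I would record the multilinear $L_k$-structure of every piece on the right of \eqref{wkEqn}, in the spirit of Lemma~\ref{t:BilinearK}: the bilinear correction has already been expanded in \eqref{BklinExpansion}, and an analogous bookkeeping yields $\tilde Q_k^{3,lin}=2^kL_k(\bar\phi,\bar\phi,\bar v)$, $\tilde Q_k^{4,lin}=L_k(\bar\phi,\bar\phi,\bar\phi,\bar v)$, and $\tilde Q_k^{5,lin}=2^{-k}L_k(\bar\phi,\bar\phi,\bar\phi,\bar\phi,\bar v)$, all at output frequency $2^k$. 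Since the time interval is $[0,1]$, any bound in $L^2_tL^2_x$ or $L^{4/3}_tL^2_x$ is enough for $L^1_tL^2_x$ after a single H\"older in time. The envelope relations $c_k\lesssim d_k$ and $d_0\lesssim 1$, together with the slow-varying property, will be used at the end to collapse mixed products of envelopes into $d_k$.

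The trilinear piece $\tilde Q_k^{3,lin}$ is handled by a direct transcription of the three-case frequency analysis used for $\tilde Q_k^3$. In each configuration two of the three factors carry substantial frequency separation, so the inhomogeneous bilinear Strichartz estimate of Corollary~\ref{t:InhomoBound} applies: I would invoke \eqref{Bootstraptwo} whenever both separated factors are $\phi$, and \eqref{AssumptionTwo} whenever one of them is $v$. The remaining factor is placed in $L^4_t\dot W^{\frac14,\infty}_x$ via \eqref{BootstrapOno} or \eqref{AssumptionOne}. The gauge-induced paradifferential pieces $\phi_{<k}^2\partial_xv_k^+$, $(\partial_x\phi_{<k})^2v_k^+$, and $P_{<k}\bigl(\phi H\phi_x+H(\phi\phi_x)\bigr)v_k^+$ are pure low-high products: put $\phi_{<k}$ in $L^4_t\dot W^{\frac14,\infty}_x$, using Bernstein to absorb the derivatives and the $P_{<k}$ projection, and keep $v_k^+$ in $L^\infty_tL^2_x$, which is $\lesssim C2^{k/2}d_k$ by the bootstrap. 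For $\tilde Q_k^{4,lin}$ and $\tilde Q_k^{5,lin}$ the same philosophy yields strictly smaller bounds, since each additional $\phi$ factor contributes an extra $C\epsilon$ in $L^4_t\dot W^{\frac14,\infty}_x$, and $B_k^{lin}(\phi,v)$ obeys the analogue of \eqref{BilinearKS}, namely $\|B_k^{lin}(\phi,v)\|_{\mathcal{S}}\lesssim 2^{-k/2}C^2\epsilon d_k$, obtained by plugging the expansion \eqref{BklinExpansion} into Bernstein and \eqref{AssumptionOne}.

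The main obstacle is the residual quadratic term
\[
Q_k^{2,lin}(v_0,\phi_k^+)=-\tfrac{3}{2}i\,\partial_x\bigl(v_0\,\partial_x\phi_k^+\bigr),
\]
which captures the interaction of the $v$-zero-frequency piece with the high-frequency paradifferential coefficients and therefore is not accessible to a $\partial_x^{-1}v_0$-type normal form. The plan is to exploit the exact external derivative and to apply the inhomogeneous bilinear Strichartz estimate of Corollary~\ref{t:InhomoBound} to the well-separated pair $(v_0,\partial_x\phi_k^+)$, so that the separation gain $2^{-\max\{0,k\}}=2^{-k}$ compensates one of the two derivatives; the second derivative is matched by the $\ell^\infty$-normalization $\|v_0\|_{L^\infty}\lesssim d_0\lesssim 1$ via Bernstein and by the bootstrap bilinear estimate \eqref{AssumptionTwo} at $j=0$, where the $2^{j/2}$ weight is trivial. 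Together with $c_k\lesssim d_k$, this yields the target $2^{k/2}C\epsilon d_k$ bound in $L^2_tL^2_x\subset L^1_tL^2_x[0,1]$. The delicate part is precisely this accounting of powers of $2^k$ against the two derivatives falling on $\phi_k^+$; it is the analytic reason why the normal-form / gauge reduction of Section~\ref{s:LinearizedHBO} had to exclude the $v$-zero frequencies, and why the linearized bound closes exactly at the $H^{-\frac12}$ level rather than a higher Sobolev index.
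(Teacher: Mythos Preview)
Your overall strategy matches the paper's: transcribe the multilinear case analysis of Lemma~\ref{t:tildeQ345} to the linearized setting, using the mixed bilinear bootstrap \eqref{AssumptionTwo} in place of \eqref{Bootstraptwo} wherever one factor is $v$, and handle the residual quadratic piece $Q_k^{2,lin}$ directly by the bilinear $L^2$ estimate at $j=0$.

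There is, however, one genuine gap. Your claim that ``an analogous bookkeeping yields $\tilde Q_k^{3,lin}=2^kL_k(\bar\phi,\bar\phi,\bar v)$'' is not correct as stated. The correction $B_k^{lin}(\phi,v)$ differs from $2B_k(v,\phi)$ precisely by the extra piece $-\tfrac12 i\,\partial_x^{-1}v_{(0,k)}\,P_k^+\phi$. When you feed the quadratic sources from the $\phi$-equation into the first slot of $B_k^{lin}$ (for instance $B_k^{lin}\bigl((\phi H\phi_x)_x,v\bigr)$ in $Q_k^{3,lin}$), this piece produces terms of the form
\[
\partial_x^{-1}v_{(0,k)}\,P_k^+\partial_x(\phi H\phi_x)
=\sum_{j\in(0,k)}2^{-j+2k}L_k(\bar\phi_k,\bar\phi_{<k},v_j)
+\sum_{j\in(0,k)}\sum_{l\ge k}2^{-j+2k}L_k(\bar\phi_l,\bar\phi_l,v_j),
\]
which do \emph{not} fit the $2^kL_k(\bar\phi,\bar\phi,\bar v)$ template: the antiderivative lands on $v_j$ with $0<j<k$ and contributes $2^{-j}$ rather than $2^{-k}$. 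The paper singles out exactly these terms and treats them separately, observing that in every such expression two of the three inputs necessarily have unbalanced frequencies (either $v_j$ versus $\phi_k$, or $\phi_{<k}$ versus $\phi_k$, or $\phi_l$ versus $v_j$), so one applies a bilinear $L^2$ bound to that separated pair and a Strichartz/Bernstein bound to the remaining factor. Your generic three-case frequency discussion does not cover this; you should isolate and estimate these exceptional contributions explicitly.

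A minor remark on $Q_k^{2,lin}$: the paper's argument is simply to invoke \eqref{AssumptionTwo} with $j=0$ directly on $L(v_0,\phi_k)$ in $L^2_{t,x}$, then count the two derivatives against the $2^{-k}$ bilinear gain. Your sentence about the second derivative being ``matched by the $\ell^\infty$-normalization $\|v_0\|_{L^\infty}\lesssim d_0$'' is not part of that mechanism and obscures the actual counting.
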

\begin{proof}
The $Q_k^{2,lin}$ term is easily estimated in $L^2_{t,x}$ using the bilinear estimates in the bootstrap assumption \eqref{AssumptionTwo}.
\begin{equation*}
  \|Q^{2,lin}_k \|_{L_{t,x}^2} \lesssim \epsilon C c_k\cdot 2^{-k+2k-\frac{k}{2}} \lesssim 2^{\frac{k}{2}}C\epsilon d_k.
\end{equation*}
Then for the term $B_k^{lin}(\phi, v)$, we only need to change a bit from the proof for initial data:
\begin{equation*}
   \|B_k^{lin}(\phi, v)\|_{\mathcal{S}} \lesssim C\epsilon d_k. 
\end{equation*}
\par For $\tilde{Q}_k^{3,lin}$ terms, we have similar estimate for $L_k(\Bar{\phi}, \Bar{\phi}, v)$, where $\Bar{\phi}$ is either $\phi$ or $H\phi$.
The only exception is the terms like $\partial_x^{-1}v_{(0,k)}P_k^+\partial_x(\phi H\phi_x)$, which admits the expansion
\begin{equation*}
 \partial_x^{-1}v_{(0,k)}P_k^+\partial_x(\phi H\phi_x) = \sum_{j\in\{0,k \}}2^{-j+2k}L_k(\Bar{\phi}_k ,\Bar{\phi}_{<k}, v_j)+ \sum_{j\in\{0,k \}}\sum_{l\geq k}2^{-j+2k}L_k(\Bar{\phi_l} ,\Bar{\phi_l}, v_j). 
\end{equation*}
Here we necessarily have two unbalanced frequencies, therefore this expression can be estimated by a bilinear $L^2$ estimate plus a Strichartz estimate.
\par The rest of proof is identical to the similar argument as lemma \ref{t:tildeQ345}.
\end{proof}
On account of the estimate for $w_k(0)$ \eqref{wk0Estimate}, the inhomogeneous terms and $B_k^{lin}(\phi, v)$, we prove the bound for $v_k^+$.
\par To recover the bilinear $L^2$ bounds we again use the bilinear Strichartz estimate,
\begin{equation*}
    \|\tilde{P}_jw_j^+ \cdot \tilde{P}_k\psi_k^+ \|_{L^2_{t,x}} \lesssim \epsilon d_j c_k 2^{\frac{j}{2}}2^{-max\{j,k\}}.
\end{equation*}
Without loss of generality, we assume that $j<k$.
To transform this bound to $v_j^+\phi_k^+$ we write
\begin{equation*}
\tilde{P}_jw_j^+ \tilde{P}_j\psi_k^+ - v_j^+e^{-i\Phi_{<j}}\phi_k^+e^{-i\Phi_{<k}} = \tilde{P}_jw_j^+(\tilde{P}_k \psi_k^+ - \phi_k^+e^{-i\Phi_{<k}})+(\tilde{P}_jw_j^+ - v_j^+ e^{-i\Phi_{<j}})\phi_k^+e^{-i\Phi_{<k}}.
\end{equation*}
For the first term, we use \eqref{FirstComponent} for the second factor and the bootstrap assumption \eqref{AssumptionOne} for the first factor. 
For the second term, we rewrite the first factor in a commutator structure as
\begin{equation*}
    \begin{aligned}
   \tilde{P}_jw_j^+ - v_j^+ e^{-i\Phi_{<j}} & = (\tilde{P}_j - 1)(v_j^+e^{-i\Phi_{<j}}) + B_j^{lin}(\phi, v)e^{-i\Phi_{<j}} \\
   &= [\tilde{P}_j-1 , e^{-i\Phi_{<j}}]v_j^+ +e^{-i\Phi_{<j}}(\tilde{P}_j - 1)B_j(\phi , v)+B_j^{lin}(\phi, v)e^{-i\Phi_{<j}} \\
   & = 2^{-j}L(\partial_xe^{-i\Phi_{<j}}, \phi_j^+)+ L\left(B_j^{lin}(\phi, v), e^{-i\Phi_{<j}}\right) \\
   & = 2^{-j}L(\phi_{<j},v_j,e^{-i\Phi_{<j}} ) +2^{-j}L(v_{<j},\phi_j,e^{-i\Phi_{<j}} ) +2^{-j}L(\partial_x^{-1}v_{(0,j)},\phi_j,e^{-i\Phi_{<j}} ) \\
   &\quad + \sum_{l>j}2^{-l}L(v_l, \phi_l ,e^{-i\Phi_{<j}}).
    \end{aligned}
\end{equation*}
Now we multiply this by $\phi_k^+$, discard the exponential factor and estimate in $L^2$ using our bootstrap hypothesis \eqref{AssumptionTwo}.
For $l\neq k$ we can use a bilinear $L^2$ estimate combined with an $L^\infty$ bound obtained via Bernstein's inequality.
For $l = k$, we estimated directly.
We obtain
\begin{equation*}
  \| (\tilde{P}_jw_j^+ - v_j^+ e^{-i\Phi_{<j}})\phi_k^+\|_{L^2_{t,x}} \lesssim C\epsilon^2 2^{-\frac{k}{2}}d_jd_k.  
\end{equation*}
Till now, we manage to prove \eqref{BootstrapOne} and \eqref{BootstrapTwo}, and this leads to \eqref{e:LinearizedBound}.

\section{$L^2$ well-posedness for third order Benjamin-Ono equation}\label{s:WellPosedness}
In this section, we show that the equation $\eqref{HBO}$ is locally well-posed in $L^2$, so that it becomes globally well-posed because of the conservation of mass. 
By scaling we can assume that our initial data satisfies
\begin{equation}
    \| \phi_0\|_{L^2} \leq \epsilon \ll 1. \label{InitialOneHalf}
\end{equation}
and prove well-posedness up to time $T = 1$. 
If the initial data is in $H^2$ then the solution exists(see \cite{MR2737850}).
And according to the bounds for linearized equation \eqref{e:LinearizedBound}, we can compare two solutions in $\mathcal{S}^{-\frac{1}{2}}$,
\begin{equation}
    \|\phi^{(1)} -\phi^{(2)} \|_{\mathcal{S}^{-\frac{1}{2}}} \lesssim \|\phi^{(1)}(0) -\phi^{(2)}(0) \|_{H^{-\frac{1}{2}}}. \label{Lipschitz}
\end{equation}
We call this property \textit{weak Lipschitz dependence on the initial data}(see \cite{MR3948114} and \cite{ifrim2020local}).

 Next we use this property to construct solution from smooth solutions as a limit for $L^2$ data.
 Given any initial data $\phi_0 \in L^2$ satisfying \eqref{InitialOneHalf}, we consider the corresponding regularized data
 \begin{equation*}
     \phi^{(n)} (0) = P_{<n}\phi_0. 
 \end{equation*}
 Then these satisfy
 \begin{equation*}
     \|\phi^{(n)} (0) \|_{L^2} \leq \epsilon, \qquad \|P_k \phi^{(n)}(0) \|_{L^2}\leq \epsilon c_k.
 \end{equation*}
 By virtue of Theorem \ref{Theorem4} , the corresponding solutions $\phi^{(n)}$ exist in time $[0,1]$, and satisfy the uniform bounds
 \begin{equation}
     \| P_k \phi^{(n)}\|_{\mathcal{S}} \lesssim \epsilon c_k. \label{DyadicBound}
 \end{equation}
 On the other hand, for their difference,
 \begin{equation*}
     \|\phi^{(n)} - \phi^{(m)} \|_{\mathcal{S}^{-\frac{1}{2}}} \lesssim \| \phi^{(n)}(0) - \phi^{(m)}(0)\|_{H^{-\frac{1}{2}}} \lesssim (2^{-n}+ 2^{-m})\epsilon.
 \end{equation*}
 Thus the sequence $\phi^{(n)}$ converges to some function $\phi$ in space $S^{-\frac{1}{2}}$, and
 \begin{equation*}
     \|\phi^{(n)} - \phi \|_{\mathcal{S}^{-\frac{1}{2}}} \lesssim 2^{-n}\epsilon.
 \end{equation*}
 In particular, for each dyadic component, we have the convergence in $S$. 
Therefore the function $\phi$ inherits the dyadic bounds \eqref{DyadicBound}
 \begin{equation}
     \| P_k \phi\|_{\mathcal{S}}\lesssim \epsilon c_k. \label{EnvelopBound}
 \end{equation}
 This further allows us to prove convergence in $l^2S$. 
 For fixed $k$, we write
 \begin{equation*}
    \lim \sup \|\phi^{(n)} - \phi \|_{l^2\mathcal{S}}\leq \lim \sup \|P_{<k}(\phi^{(n)} - \phi) \|_{l^2\mathcal{S}}+ \|P_{\geq k}\phi \|_{l^2\mathcal{S}}+\lim \sup \|P_{\geq k}\phi^{(n)} \|_{l^2\mathcal{S}}\lesssim c_{\geq k}.
 \end{equation*}
 Letting $k \rightarrow \infty$, we obtain
 \begin{equation*}
     \lim \|\phi^{(n)}-\phi \|_{l^2\mathcal{S}} = 0.
 \end{equation*}
 This property also implies uniform convergence in $C((0,1),L^2)$;  
 it in turn allows us to pass to the limit in the third order Benjamin-Ono equation, and prove that the limit $\phi$ solves the equation in the sense of distributions.
 \par Thus, for each initial data $\phi_0 \in L^2$ we have obtained a weak solution $\phi \in l^2\mathcal{S}$, as the limit of the solutions with regularized data. 
 Furthermore, this solution satisfies the frequency envelope bound \eqref{EnvelopBound}.
Now we consider the dependence of these weak solutions on the initial data. 
Suppose that a sequence of initial data $\phi^{(n)}(0)$ satisfying \eqref{InitialOneHalf} converges uniformly to $\phi_0$ in $L^2$, then by the weak Lipschitz dependence 
\begin{equation*}
    \phi^{(n)} \rightarrow \phi \quad \mbox{in}\quad \mathcal{S}^{-\frac{1}{2}}.
\end{equation*}
Hence for the corresponding solutions we estimate
\begin{equation*}
    \phi^{(n)} - \phi = P_{<k}(\phi^{(n)}-\phi) + P_{\geq k}\phi^{(n)} - P_{\geq k}\phi.
\end{equation*}
Here the first term on the right converges to zero in $l^2\mathcal{S}$ as $n\rightarrow \infty$ by the weak Lipschitz dependence \eqref{Lipschitz}, and the last term converges to zero by the frequency envelope bound \eqref{EnvelopBound}. 
Hence,
\begin{equation*}
    \limsup_{n\rightarrow \infty} \|\phi^{(n)} - \phi \|_{l^2\mathcal{S}} \leq \lim_{k\rightarrow \infty} \limsup_{n \rightarrow \infty} \| P_{\geq k}\phi^{(n)}\|_{l^2\mathcal{S}}.
\end{equation*}
It remains to show that this last right hand side vanishes. For this we use the frequency envelope bound \eqref{EnvelopBound} applied to $\phi^{(n)}$ as follows.
\par Given $\delta >0$, we have 
\begin{equation*}
    \|\phi^{(n)}(0) - \phi_0 \|_{L^2} \leq \delta, \qquad n\geq n_{\delta}.
\end{equation*}
Suppose $\epsilon c_k$ is an $L^2$ frequency envelop for $\phi_0$, then
\begin{equation*}
    \|P_{\geq k}\phi^{(n)} \|_{l^2\mathcal{S}} \lesssim \epsilon c_{\geq k} + \delta.
\end{equation*}
By taking the limsup and letting $k$ go to $\infty$,
\begin{equation*}
   \lim_{k \rightarrow \infty } \limsup_{n\rightarrow \infty} \|P_{\geq k}\phi^{(n)} \|_{l^2\mathcal{S}} \lesssim \delta.
\end{equation*}
Since $\delta$ is arbitrary, we have
\begin{equation*}
   \lim_{k \rightarrow \infty } \limsup_{n\rightarrow \infty} \|P_{\geq k}\phi^{(n)} \|_{l^2\mathcal{S}} = 0,
\end{equation*}
and the proof of the theorem is complete.

\section{Decay for the third order Benjamin-Ono equation} \label{s:Decay}
\par For the rest of this article we prove Theorem~\ref{t:MainTheoremTwo}, following the idea of \cite{ifrim2019dispersive}.
To show this, we first set up bootstrap assumptions
\begin{equation}
    |\phi(t,x)|\leq M\epsilon t^{-\frac{1}{4}}\left \langle x \right \rangle_{t}^{-\frac{1}{4}+\delta}, \quad |\phi_x(t,x)|\leq M\epsilon t^{-\frac{3}{4}}\left \langle x \right \rangle_{t}^{\frac{1}{4}+\delta}. \label{BootstrapThree}
\end{equation}  
And we further assume that 
\begin{equation}
   |\phi_{xx}(t,x)|\leq M\epsilon t^{-\frac{5}{4}}\left \langle x \right \rangle_{t}^{\frac{3}{4}+\delta}. \label{BootstrapFour}
\end{equation}
Then, we can also obtain a pointwise bound for $H\phi(x)$,
\begin{align*}
    |H\phi(x)| &\lesssim \left|\int_{|y-x|\leq c}\frac{\phi(y)}{x-y}dy\right| +  \left|\int_{|y-x|> c}\frac{\phi(y)}{|x-y|}dy\right|\\
   & \leq \int_{|y-x|\leq c}\frac{|\phi(y)-\phi(x)|}{|x-y|}dy + \int_{|y-x|> c} \frac{M\epsilon t^{-\frac{1}{4}}\left\langle y \right \rangle_{t}^{-\frac{1}{4}}}{|x-y|}dy \\
    &\lesssim  \int_{|y-x|\leq c} |\phi^{'}(x)|dy + \int_{|y-x|> c} M\epsilon t^{-\frac{1}{4}}\left\langle y \right \rangle_{t}^{-\frac{5}{4}} \lesssim M\epsilon  t^{-\frac{1}{4}}\left\langle x \right \rangle_{t}^{-\frac{1}{4}} .
\end{align*}
Given this set-up, our proof has three main steps:

\subsection{Uniform energy estimates for $\phi$}
Equation \eqref{HBO} is a completely integrable equation.
From \cite{ifrim2019dispersive} we have the uniform energy estimate
\begin{proposition}
Assume the solution $\phi$ of $\eqref{HBO}$ has initial data $\phi_0$ such that for every small positive constant $\delta$,
\begin{equation*}
    \| \phi_0\|_{B_{2,\infty}^{-\frac{1}{2}+ \delta}}\leq \epsilon.
\end{equation*}
Then 
\begin{equation*}
    \|\phi(t)\|_{B_{2,\infty}^{-\frac{1}{2}+ \delta}}\lesssim \epsilon.
\end{equation*} \label{t:BOReference}
\end{proposition}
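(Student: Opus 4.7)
The plan is to establish this uniform Besov bound via a bootstrap continuity argument built on the normal form framework of Section~\ref{s:NormalForm}. Set up a slowly varying $B_{2,\infty}^{-\frac{1}{2}+\delta}$ frequency envelope $\{c_k\}_{k\geq 0}$ for the initial data with $c_k\lesssim 1$ and $\|P_k\phi_0\|_{L^2}\leq 2^{(\frac{1}{2}-\delta)k}\epsilon c_k$. The bootstrap hypothesis on a time interval $[0,T]$ is
\[
\|P_k\phi(t)\|_{L^2}\leq C\,2^{(\frac{1}{2}-\delta)k}\,\epsilon c_k,
\]
and the goal is to improve the constant $C$ to $1+O(\epsilon^2)$. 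The pointwise bounds \eqref{BootstrapThree} and \eqref{BootstrapFour}, together with the derived estimate on $H\phi$, are available as part of the decay bootstrap.

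For each dyadic piece I would import the normal form variable $\psi_k^+ = e^{-i\Phi_{<k}}(\phi_k^+ + B_k(\phi,\phi))$, which satisfies the Airy-type equation \eqref{NonlinearAiry} with cubic, quartic, and quintic right-hand side. Lemma~\ref{t:BilinearK} represents $B_k(\phi,\phi)$ as $2^{-k}L_k(\phi,\phi)$, so the bootstrap hypothesis yields $\|B_k(\phi,\phi)\|_{L^2}\lesssim \epsilon^2 2^{-2k\delta}\cdot 2^{(\frac{1}{2}-\delta)k}c_k$, which is a strictly subleading perturbation to $\|P_k\phi\|_{L^2}$. Consequently $\|\psi_k^+\|_{L^2}$ and $\|P_k\phi\|_{L^2}$ are equivalent up to acceptable errors, and it suffices to control the former in $L^2$.

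The heart of the argument is a time-integrated energy estimate on $\psi_k^+$. Differentiating $\|\psi_k^+\|_{L^2}^2$ and using the skew-adjointness of $\partial_x^3$, the problem reduces to controlling
\[
\int_0^T \!\!\int \psi_k^+\,\bigl(\tilde Q_k^3+\tilde Q_k^4+\tilde Q_k^5\bigr)\,dx\,dt.
\]
The strategy is to place two of the nonlinear $\bar\phi$ factors in $L^\infty_{t,x}$ using \eqref{BootstrapThree}, which provides $\|\phi(t)\|_{L^\infty_x}^2\lesssim \epsilon^2 t^{-\frac{2}{3}+O(\delta)}$, and pair the remaining factor against $\psi_k^+$ via bilinear Strichartz when frequencies separate, or via the mixed-norm Strichartz estimate when they do not. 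Integration in $t$ over the window $t\ll \epsilon^{-12}$ produces a total contribution of size $\lesssim \epsilon^2 (C\epsilon)^2\cdot 2^{(1-2\delta)k}c_k^2$ uniformly in $k$, which upon taking $\sup_k 2^{-(\frac{1}{2}-\delta)k}(\cdot)^{1/2}$ is strictly smaller than $C\epsilon$ and closes the Besov bootstrap.

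The main obstacle is the cubic source $\tilde Q_k^3\sim 2^k L_k(\bar\phi,\bar\phi,\bar\phi)$, which sits exactly at scaling: no absolute smallness is available from pure energy considerations, so the argument is forced to combine the dispersive decay from \eqref{BootstrapThree} with the cancellations already built into the normal form (elimination of resonant quadratic interactions) and with the commutator representation of Lemma~\ref{t:BilinearK}, which transfers derivatives onto the low-frequency factor in $B_k$ and thereby prevents derivative loss through the nonlocal Hilbert transforms. The strict positivity of $\delta$ is essential: it is precisely the $\delta$ gain in the exponent of $\langle x\rangle_t$ that makes the critical time integrals convergent, and this explains the obstruction, noted in the surrounding text, to reaching $B_{2,\infty}^{-\frac{1}{2}}$ regularity.
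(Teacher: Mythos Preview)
Your approach diverges fundamentally from the paper's, and in a way that introduces a genuine logical gap.

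The paper does not prove this proposition at all by normal form or bootstrap methods. It cites the result from \cite{MR4270277}, where the uniform Besov bound is obtained from the \emph{complete integrability} of the Benjamin--Ono hierarchy: one constructs conserved (or almost-conserved) energies at every regularity level, including $B_{2,\infty}^{-\frac{1}{2}+\delta}$, directly from the Lax pair structure. This gives an unconditional, global-in-time bound with no reference whatsoever to the pointwise decay assumptions \eqref{BootstrapThree}, \eqref{BootstrapFour}.

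Your argument, by contrast, invokes \eqref{BootstrapThree} and \eqref{BootstrapFour} as inputs. This is circular in the paper's logical structure: Proposition~\ref{t:BOReference} is one of the three \emph{inputs} to the decay bootstrap in Section~\ref{s:Decay}, not one of its outputs. The pointwise bounds you are using are precisely what the Besov estimate is supposed to help establish. At best, your scheme proves a different, weaker statement: that the Besov norm stays controlled \emph{conditionally on the decay bootstrap hypothesis} and only for $t\ll\epsilon^{-12}$, whereas the proposition as written is unconditional and carries no time restriction. Even if one attempted to salvage this by folding the Besov control into the overall bootstrap of Theorem~\ref{t:MainTheoremTwo}, your time-integral accounting is not convincing: putting two factors of $\phi$ in $L^\infty_{t,x}$ via \eqref{BootstrapThree} yields at best $t^{-\frac{2}{3}+O(\delta)}$ decay, and integrating this over $[1,T]$ with $T\sim\epsilon^{-12}$ produces a factor of order $T^{\frac{1}{3}}\sim\epsilon^{-4}$, which does not obviously close. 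The analogous argument in Section~\ref{s:AlmostConserved} for the linearized equation requires a carefully designed cubic energy correction to reach $t^{-\frac{11}{12}}$ decay, and you have not indicated any such correction for the nonlinear Besov bound.
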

In \cite{MR4270277}, this proposition is proved for Benjamin-Ono equation, and it also holds for Benjamin-Ono hierarchy. 
Note that this proposition is slightly different from Proposition $3.1$ of \cite{ifrim2019dispersive}, as the Besov index must be strictly greater than $-\frac{1}{2}$.

\subsection{$\dot{H}^{-\frac{1}{2}}$ bounds for the linearized equation}
\begin{proposition}
Let $\phi$ be a solution to the equation $\eqref{HBO}$ which satisfies the smallness of initial condition \eqref{SmallnessCondition}, as well as the bootstrap assumptions for $t \ll \epsilon^{-12}$. Then the solution of the linearized equation \eqref{LHBO} has almost conserved $\dot{H}^{-\frac{1}{2}}$ bound. 
\end{proposition}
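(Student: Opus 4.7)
The plan is to run an almost-conservation argument for a modified $\dot H^{-1/2}$ energy of $v$, built from the normal form introduced in Section~\ref{s:LinearizedHBO}. Specifically, I work with the renormalized variables
\begin{equation*}
w_k^+ = e^{-i\Phi_{<k}}\bigl(v_k^+ + B_k^{lin}(\phi,v)\bigr),
\end{equation*}
which satisfy the inhomogeneous Airy-type equation \eqref{wkEqn} whose right-hand side is cubic, quartic, and quintic in $\phi$ and $v$ (plus the residual low-frequency quadratic term $Q_k^{2,lin}(v_0,\phi_k)$). My proposed energy is $E(t) := \sum_k 2^{-k}\|w_k\|_{L^2}^2$. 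The first step is to verify the equivalence $E(t) \approx \|v\|_{\dot H^{-1/2}}^2$: the gauge factor is unitary, and the expansion \eqref{BklinExpansion} together with the smallness of $\phi$ (from Proposition~\ref{t:BOReference}) shows that the correction $B_k^{lin}(\phi,v)$ is perturbative in $\dot H^{-1/2}$.

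Next I differentiate $E(t)$ in time. The linear Airy part of \eqref{wkEqn} is skew-adjoint and contributes nothing, so
\begin{equation*}
\frac{d}{dt}E(t) = 2\sum_k 2^{-k}\,\mathrm{Re}\int w_k \cdot \bigl[Q_k^{2,lin}+\tilde Q_k^{3,lin}+\tilde Q_k^{4,lin}+\tilde Q_k^{5,lin}\bigr] e^{-i\Phi_{<k}}\, dx.
\end{equation*}
The $Q^{2,lin}_k$ contribution is handled directly by a dyadic argument using the frequency localization $v_0 = P_{<1} v$ and a bilinear $L^2$ bound of the type established in Lemma~\ref{t:BilinearK}; the absence of $\partial_x^{-1}$ at zero frequency is precisely compensated by the factor $2^{-k}$ in $E(t)$. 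For the cubic terms, the delicate piece is the $\phi^2 v_x$ contribution. I treat it by exploiting the conservative form $\phi^2 v_x = \frac{1}{2}(\phi^2 v)_x - \phi\phi_x v$ and then using skew-adjointness of the Hilbert transform to rewrite, schematically,
\begin{equation*}
\int (|D|^{-1}v)\,\phi^2 v_x\,dx = -\tfrac{1}{2}\int v\,[H,\phi^2]\,v\,dx + \text{lower order},
\end{equation*}
where the commutator $[H,\phi^2]$ is a pseudodifferential operator of order $-1$ with symbol controlled by $\partial_x(\phi^2)$. This allows the bound $|\frac{d}{dt}E| \lesssim \|\partial_x(\phi^2)\|_{L^\infty}\,E$; applying the bootstrap assumptions \eqref{BootstrapThree}--\eqref{BootstrapFour} yields $\|\partial_x(\phi^2)\|_{L^\infty} \lesssim \|\phi\|_{L^\infty}\|\phi_x\|_{L^\infty} \lesssim M^2\epsilon^2 t^{-1}$, and a Gronwall step gives $E(T) \lesssim E(0)\cdot T^{CM^2\epsilon^2}$, which is $(1+o(1))\,E(0)$ as long as $T\ll \epsilon^{-12}$. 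The quartic and quintic contributions from $\tilde Q_k^{4,lin},\tilde Q_k^{5,lin}$ carry at least one extra factor of $\phi$ and therefore an extra factor of $M\epsilon t^{-1/4}$ in $L^\infty$, producing strictly integrable time factors.

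The main obstacle will be the cubic Hilbert-transform terms of the form $\phi Hv_{xx}$, $v H\phi_{xx}$, $H(\phi v_{xx})$ appearing in the original equation. These do not automatically come with a commutator structure, so to extract extra smallness I will need to symmetrize them using identity \eqref{HilbertThree} together with the paraproduct decomposition into high-low, low-high and diagonal pieces. The low-high interactions are handled by the commutator trick above; the diagonal (balanced) interactions are where the bilinear $L^2$ component of the bootstrap bound \eqref{Bootstraptwo} and its analog from Section~\ref{s:LinearizedHBO} must be invoked to gain the $2^{-\max\{j,k\}}$ factor that converts $L^2$-level pairings into $\dot H^{-1/2}$-compatible ones. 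A second, more technical point is the low-frequency regime $k=0$, where $\partial_x^{-1}v$ is undefined; following Section~\ref{s:LinearizedHBO}, this is dealt with by truncating frequencies below $1$ inside $B_k^{lin}$ and treating the leftover low-frequency quadratic pairing as a genuinely perturbative cubic term, whose contribution is bounded trivially by the bootstrap since all three factors are harmlessly at low frequency.
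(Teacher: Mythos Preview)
Your approach diverges substantially from the paper's, and as written it does not close. The paper does \emph{not} reuse the dyadic gauge variables $w_k^+$ from Section~\ref{s:LinearizedHBO}. Instead it sets $y=|D|^{-1/2}v$, writes the equation for $y$, and builds a modified energy $E=E^{[2]}+E^{[3]}$ where $E^{[2]}=\|y\|_{L^2}^2$ and $E^{[3]}$ is a cubic correction restricted to frequencies $\gg t^{-1/3}$. The time-dependent threshold $t^{-1/3}$ is the essential mechanism: contributions in $\partial_t E^{[2]}$ with at least one factor at frequency $\lesssim t^{-1/3}$ are bounded directly by $M\epsilon t^{-11/12}\|y\|_{L^2}^2$, while the all-high-frequency contributions are cancelled by $\partial_t E^{[3]}$ up to quartic errors. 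Crucially, the correction itself satisfies $|E^{[3]}|\lesssim \epsilon t^{1/12}\|y\|_{L^2}^2$, and the requirement $\epsilon t^{1/12}\ll 1$ for norm equivalence is exactly what produces the restriction $t\ll\epsilon^{-12}$. Your write-up never identifies this mechanism, so the appearance of the $\epsilon^{-12}$ threshold in your argument is not explained.

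There are also concrete gaps. First, you oscillate between two different energies: you define $E=\sum_k 2^{-k}\|w_k\|_{L^2}^2$, but the commutator computation $\int(|D|^{-1}v)\,\phi^2 v_x=-\tfrac12\int v\,[H,\phi^2]\,v+\cdots$ is a statement about the direct pairing with $|D|^{-1}v$, not about the $w_k$-energy; these are not the same object. Second, the Section~\ref{s:LinearizedHBO} estimates for $Q_k^{2,lin}$ and $\tilde Q_k^{3,lin}$ rely on spacetime bilinear $L^2$ bounds on a unit time interval, not on fixed-time $L^2$ bounds; when you try to bound $\sum_k 2^{-k}\int\bar w_k\,\tilde Q_k^{3,lin}$ pointwise in time, a term such as $\phi_{<k}^2\partial_x v_k^+$ yields (after integration by parts) $\|\partial_x(\phi_{<k}^2)\|_{L^\infty}\|v_k\|_{L^2}^2\,2^{-k}$, and Bernstein gives $\|\partial_x(\phi_{<k}^2)\|_{L^\infty}\lesssim 2^k\|\phi\|_{L^\infty}^2$, so the sum collapses to $\|\phi\|_{L^\infty}^2\|v\|_{L^2}^2$ rather than $\|\phi\|_{L^\infty}^2\|v\|_{\dot H^{-1/2}}^2$. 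Third, your Gronwall coefficient $\|\partial_x(\phi^2)\|_{L^\infty}$ is not controlled by the bootstrap \eqref{BootstrapThree}: one has $|\phi\phi_x|\lesssim M^2\epsilon^2 t^{-1}\langle x\rangle_t^{2\delta}$, which carries a positive power of $\langle x\rangle_t$ and is therefore not in $L^\infty_x$. The paper avoids this precisely by the $t^{-1/3}$ frequency splitting, which replaces global $L^\infty$ bounds on $\phi_x$ by frequency-localized ones.
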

This proposition is proved in next section.
We see that one solution for the linearized equation $\eqref{LHBO}$ is provided by
\begin{equation*}
    z : = \partial_x(L^{NL}\phi),
\end{equation*}
and $L^{NL}\phi$ \eqref{NonlinearL} solves the adjoint linearized equation
\begin{equation*}
    w_t - w_{xxx}+\frac{3}{4}w_x\phi^2 +\frac{3}{4}w_xH\phi_x +\frac{3}{4}(w_x\phi)_{x}+\frac{3}{4}Hw_{xx}\cdot\phi =0.
\end{equation*}
By the duality argument, the adjoint linearized equation has almost conserved $\dot{H}^{\frac{1}{2}}$ bounds.
Therefore we show that
\begin{proposition}
Let $\phi$ be a solution to the equation $\eqref{HBO}$ which satisfies the smallness of initial condition \eqref{SmallnessCondition}, as well as the bootstrap assumption \eqref{BootstrapThree}, \eqref{BootstrapFour} for $t \ll \epsilon^{-12}$. 
Then we have 
\begin{equation*}
    \|L^{NL}\phi(t)\|_{\dot{H}^{\frac{1}{2}}}\lesssim \epsilon, \quad t \ll \epsilon^{-12}.
\end{equation*}
\end{proposition}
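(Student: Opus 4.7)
The plan is a straightforward duality argument based on the preceding proposition. First, I would verify the algebraic fact that $L^{NL}\phi$ solves the adjoint linearized equation
\begin{equation*}
w_t - w_{xxx} + \frac{3}{4}w_x\phi^2 + \frac{3}{4}w_x H\phi_x + \frac{3}{4}(w_x\phi)_x + \frac{3}{4}Hw_{xx}\cdot\phi = 0,
\end{equation*}
or equivalently that $z := \partial_x L^{NL}\phi$ solves the linearized equation \eqref{LHBO}. For the linear part this is immediate from the commutation relations $[\partial_x, L] = 1$ and $[L, \partial_t - \partial_x^3] = 0$ already recorded in Section~\ref{s:Definition}. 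The nonlinear contribution in \eqref{NonlinearL} is handled by a direct computation, rewriting the cubic and Hilbert-type quadratic terms via \eqref{HilbertOne}--\eqref{HilbertThree} and using that $\phi$ solves \eqref{HBO1}, so that the spurious commutators produced by $L$ against the nonlinearity cancel against $-\frac{3t}{4}\partial_t$ of the same nonlinearity.

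With this algebraic identification, the claimed $\dot H^{1/2}$ bound follows by duality from the previous proposition. Since $\dot H^{1/2}$ and $\dot H^{-1/2}$ are dual via the $L^2$ pairing, the almost conservation of the $\dot H^{-1/2}$ norm along \eqref{LHBO} transfers to the adjoint equation. Concretely, for any test function $\eta \in \dot H^{-1/2}$, I would evolve $\eta$ backwards from time $t$ as a solution of \eqref{LHBO}, apply the previous proposition to get $\|\eta(s)\|_{\dot H^{-1/2}} \lesssim \|\eta\|_{\dot H^{-1/2}}$ uniformly for $0 \le s \le t \ll \epsilon^{-12}$, and then compute
\begin{equation*}
\langle L^{NL}\phi(t), \eta \rangle = \langle L^{NL}\phi(0), \eta(0) \rangle \lesssim \|L^{NL}\phi(0)\|_{\dot H^{1/2}}\,\|\eta\|_{\dot H^{-1/2}}.
\end{equation*}
Taking the supremum over $\eta$ yields $\|L^{NL}\phi(t)\|_{\dot H^{1/2}} \lesssim \|L^{NL}\phi(0)\|_{\dot H^{1/2}}$. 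The initial term is evaluated directly from \eqref{NonlinearL}: at $t = 0$ the $t\phi_{xx}$ and $t\phi^3$-type pieces vanish, so $L^{NL}\phi(0) = x\phi_0$, and the smallness hypothesis \eqref{SmallnessCondition} gives $\|x\phi_0\|_{\dot H^{1/2}} \le \epsilon$.

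The main obstacle is the second step rather than the first: one must make sure the almost-conservation result of the previous proposition really applies to the specific solution $z = \partial_x L^{NL}\phi$ of the linearized equation, and that the duality pairing can absorb the cubic and quartic error terms produced by the normal form and gauge analysis of Section~\ref{s:LinearizedHBO} without losing the lifespan $\epsilon^{-12}$. The needed inputs are precisely the pointwise bounds \eqref{BootstrapThree}, \eqref{BootstrapFour} together with the derived bound on $H\phi$, which furnish the $L^\infty_{t,x}$-type control that makes the paradifferential coefficients $\phi_{<k}$, $\partial_x \phi_{<k}$ arising in $A_{HBO}^{k,+}$ integrable in time against the dyadic energy densities of $\eta$. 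A secondary technicality is ensuring that the frequency envelopes $\{c_k\}$ and $\{d_k\}$ may be chosen compatibly for $\phi$ and for $\eta$, which follows from the scale-invariant relation $c_k \lesssim d_k$ already used in Section~\ref{s:LinearizedHBO}. Once these ingredients are in place, the bound $\|L^{NL}\phi(t)\|_{\dot H^{1/2}} \lesssim \epsilon$ on $t \ll \epsilon^{-12}$ follows.
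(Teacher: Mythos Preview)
Your proposal is correct and follows the same duality argument the paper sketches: identify $L^{NL}\phi$ as a solution of the adjoint linearized equation, transfer the almost-conserved $\dot H^{-1/2}$ bound of the previous proposition to an almost-conserved $\dot H^{1/2}$ bound for the adjoint flow via the conserved pairing, and evaluate at $t=0$ using $L^{NL}\phi(0)=x\phi_0$ together with \eqref{SmallnessCondition}. Your third paragraph's concern about the normal form and gauge machinery of Section~\ref{s:LinearizedHBO} is misplaced, however---that section handles unit-time well-posedness in $H^{-1/2}$, whereas the long-time $\dot H^{-1/2}$ almost conservation you are invoking is the content of Section~\ref{s:AlmostConserved}, which uses a cubic energy correction $E^{[3]}$ rather than paradifferential renormalization; once that proposition is granted, the duality step is purely formal and requires no further error absorption or frequency-envelope bookkeeping.
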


\subsection{Nonlinear vector field Sobolev inequalities}
Taking into account the uniform bounds for $\phi$ and $L^{NL}\phi$, the pointwise bound in the theorem will follow for the following proposition:
\begin{proposition}
Let $t\ll \epsilon^{-12}$, $\delta >0$ be a small constant, and $\phi$ be a function satisfies the bootstrap assumptions \eqref{BootstrapThree}, as well as the bound
\begin{equation*}
    \|\phi\|_{\dot{B}^{-\frac{1}{2}+\delta}_{2,\infty}} + \| L^{NL}\phi\|_{\dot{H}^{\frac{1}{2}}}\lesssim \epsilon.
\end{equation*}
Then we have the pointwise bound
\begin{equation*}
     t^{\frac{1}{4}}\left \langle x \right \rangle_{t}^{\frac{1}{4}+\delta}|\phi(t,x)|+ t^{\frac{3}{4}}\left\langle x \right \rangle_{t}^{-\frac{1}{4}+\delta}|\phi_x(t,x)|\lesssim \epsilon.
\end{equation*}
In the elliptic region $E$ we have the additional bound
\begin{equation*}
   \left\langle x \right \rangle_{t} |\phi(t,x)| + t^{\frac{1}{2}}\left\langle x \right \rangle_{t}^{\frac{1}{2}}|\phi_x(t,x)|\lesssim \epsilon \log(t^{-\frac{1}{3}}\left\langle x \right \rangle_{t}).
\end{equation*} \label{t:vfBound}
\end{proposition}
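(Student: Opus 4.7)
The plan is to follow the Ifrim--Tataru nonlinear vector field philosophy of \cite{MR3948114, ifrim2019dispersive}, splitting the spatial line at time $t$ into the hyperbolic, self-similar and elliptic regions $H,S,E$ defined in Section~\ref{s:Definition} and deriving the pointwise bound separately in each region. The algebraic starting point is the rearrangement
\[
3t\,\phi_{xx} + x\,\phi \;=\; L^{NL}\phi \;+\; \tfrac{3t}{4}\phi^{3} \;+\; \tfrac{3t}{4}\bigl[\phi H\phi_{x} + H(\phi\phi_{x})\bigr],
\]
read off from \eqref{NonlinearL}. Under the bootstrap assumptions \eqref{BootstrapThree}--\eqref{BootstrapFour}, each of the cubic terms on the right is at least $\epsilon^{2}$ times smaller than the purely linear part $x\phi + 3t\phi_{xx}$, and the smallness is uniform on the entire time window $t \ll \epsilon^{-12}$. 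Consequently the problem reduces to an (almost) linear Sobolev embedding for the Airy vector field $L = x + 3t\partial_{x}^{2}$ under the combined hypotheses $\|\phi\|_{\dot{B}^{-\frac{1}{2}+\delta}_{2,\infty}}\lesssim \epsilon$ and $\|L\phi\|_{\dot{H}^{\frac{1}{2}}}\lesssim \epsilon$.

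First I would dyadically decompose $\phi = \sum_{k} \phi_{k}$ and associate to each point $(t,x)$ the natural Airy frequency $\xi_{0}(t,x) := \sqrt{|x|/(3t)}$ coming from the stationary phase of the linear propagator. For a frequency-$2^{k}$ packet localized near $x$, a direct computation shows that $L$ behaves, up to commutator errors controlled by $[\partial_{x},L]=1$, as multiplication by $x - 3t\cdot 2^{2k}$, so the $\dot{H}^{\frac{1}{2}}$ bound on $L\phi$ (or equivalently $L^{NL}\phi$, up to cubic perturbations) yields an extra factor $|x - 3t\cdot 2^{2k}|^{-1}$ on every dyadic shell not resonant with $\xi_{0}$. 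Converting $L^{2}$ to $L^{\infty}$ by Bernstein and combining with the crude bound $\|\phi_{k}\|_{L^{\infty}}\lesssim 2^{k/2}\|\phi_{k}\|_{L^{2}} \lesssim \epsilon\, 2^{k(1-\delta)}$ inherited from Proposition~\ref{t:BOReference} gives a scale-by-scale estimate which is then summed over $k$ after choosing the splitting threshold precisely at $2^{k}=\xi_{0}$.

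Carrying out this optimization in each region yields the stated bounds. In the hyperbolic region $\langle x\rangle_{t}\approx x$ and $\xi_{0}\approx \sqrt{x/t}$; the low-frequency shells $2^{k}\ll\xi_{0}$ are governed by the $L^{NL}\phi$ estimate while the high-frequency shells $2^{k}\gg \xi_{0}$ are governed by the Besov bound, and the two contributions interpolate to $|\phi|\lesssim \epsilon t^{-\frac{1}{4}}\langle x\rangle_{t}^{-\frac{1}{4}-\delta}$, with an extra factor $\xi_{0}$ producing the companion estimate for $\phi_{x}$. In the self-similar region $\langle x\rangle_{t}\approx t^{\frac{1}{3}}$, $\xi_{0}\approx t^{-\frac{1}{3}}$, and the Besov bound summed up to $\xi_{0}$ suffices alone. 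In the elliptic region $x<0$ the operator $x+3t\partial_{x}^{2}$ is uniformly elliptic with repulsive sign; after rescaling $y=\xi_{0}(x'-x)$ the equation $3t\phi_{xx}+x\phi = F$ becomes $\phi_{yy}-\phi = F/|x|$, solved explicitly by convolution with $e^{-|y|}$. Pairing this with the $\dot{H}^{\frac{1}{2}}$ bound on $F=L^{NL}\phi$ and using Cauchy--Schwarz produces the sharper estimate $|\phi|\lesssim \epsilon \langle x\rangle_{t}^{-1}\log(t^{-\frac{1}{3}}\langle x\rangle_{t})$, where the logarithm traces back to the integration near the transition at $|x|\sim t^{\frac{1}{3}}$.

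The principal technical obstacle is to certify that the cubic corrections $\tfrac{3t}{4}\phi^{3}+\tfrac{3t}{4}[\phi H\phi_{x}+H(\phi\phi_{x})]$ in $L^{NL}\phi$ remain genuinely lower order throughout $t\ll \epsilon^{-12}$ and across all three regions. The Hilbert transform introduces nonlocal contributions whose pointwise bound one recovers from the pointwise estimate $|H\phi(t,x)|\lesssim M\epsilon\, t^{-\frac{1}{4}}\langle x\rangle_{t}^{-\frac{1}{4}}$ already established after \eqref{BootstrapFour}; the mixed term $\phi H\phi_{x}$ moreover relies on the bootstrap for $\phi_{xx}$ via integration by parts combined with the convolution identity \eqref{HilbertThree}. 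The threshold $t\ll \epsilon^{-12}$ is calibrated exactly so that, after multiplying by $t$, these cubic terms are $O(\epsilon^{3})$ smaller than the leading linear $L$ contribution, which allows them to be reabsorbed on the left-hand side and closes the bootstrap \eqref{BootstrapThree}--\eqref{BootstrapFour}.
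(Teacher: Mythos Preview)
Your outline captures the correct high-level philosophy (split into the three Airy regions, use the competing Besov and $L^{NL}$ bounds above and below the resonant frequency $\xi_0\approx\sqrt{|x|/t}$), but the route you take diverges from the paper and, as written, has a gap that would need substantial work to close.

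\textbf{How the paper argues.} After rescaling to $t=1$, the paper does \emph{not} attempt a direct frequency-by-frequency Bernstein argument. Instead it first proves two preliminary lemmas giving spatially-localized $L^2$ bounds $\|\tilde\phi\|_{L^2(A_R)}\lesssim\tilde\epsilon R^{\frac14-\frac\delta2}$, $\|\tilde\phi_x\|_{L^2(A_R)}$, $\|\tilde\phi_{xx}\|_{L^2(A_R)}$ and $\|L^{NL}\tilde\phi\|_{L^2(A_R)}\lesssim\tilde\epsilon R^{\frac12+\delta}$ on each dyadic annulus $A_R=\{\langle x\rangle\approx R\}$. Only then does it introduce a physical-space cutoff $v=\chi_R\tilde\phi$, $g=\chi_R L^{NL}\tilde\phi$, and run, in the hyperbolic region, an \emph{ODE-in-$x$ energy identity}
\[
\frac{d}{dx}\Bigl(x|v|^2+3|v_x|^2-2g_{lo}v-\tfrac38\tilde\phi^4\chi_R^2\Bigr)=|v|^2+2g_{hi}v_x-2g_{lo,x}v+\ldots
\]
followed by Gronwall; in the elliptic region it subtracts the explicit leading part $x^{-1}g_{lo}$ and multiplies the remainder by $v_1$ to exploit the good sign of $x$. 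The nonlocal Hilbert-transform term $H(\tilde\phi\tilde\phi_x)\chi_R v_x$ is dealt with by a direct kernel splitting at $|x-y|\sim R^{1/4}$.

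\textbf{Where your sketch is thin.} Your key sentence, ``the problem reduces to an (almost) linear Sobolev embedding for the Airy vector field $L$ \ldots up to cubic perturbations,'' is exactly the step the paper spends its two lemmas justifying, and it does \emph{not} follow from the pointwise bootstrap alone. You need $\|P_k(\text{cubic})\|_{L^2}$ (or a spatially localized analogue) to be small, not merely the pointwise ratio of $\tfrac{3t}{4}\phi H\phi_x$ to $x\phi$; the Hilbert transform makes the cubic term genuinely nonlocal, so that $\chi_R H(\phi\phi_x)\neq H(\chi_R\phi\phi_x)$, and neither the Besov bound nor the bootstrap immediately delivers this. Likewise, your heuristic ``$L$ acts as multiplication by $x-3t\cdot 2^{2k}$'' ignores that $x\phi_k$ is not frequency-localized; the commutator $[P_k,x]$ is only $O(2^{-k})$, which is harmless, but converting $\|P_kL\phi\|_{L^2}\lesssim\epsilon 2^{-k/2}$ into a \emph{spatially local} smallness of $\phi_k$ away from $x=3t\cdot 2^{2k}$ is itself a nontrivial step that the paper circumvents entirely by working in physical space from the outset. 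Your elliptic Green's-function picture $\phi_{yy}-\phi=F/|x|$ is morally the same as the paper's multiply-by-$v_1$ argument, but is only valid on a region where $|x|$ is essentially constant, so you would still need the dyadic-in-$R$ localization the paper uses.
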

This proposition is derived in the last section.

\section{Almost conserved quantity for linearized equation} \label{s:AlmostConserved}

In this section, we aim to show that for the linearized equation \eqref{LHBO}, we have the almost conserved $\dot{H}^{-\frac{1}{2}}$ norm, namely
\begin{equation*}
    \|v(t) \|_{\dot{H}^{-\frac{1}{2}}} \approx \| v(0)\|_{\dot{H}^{-\frac{1}{2}}}, \quad t\ll \epsilon^{-12}.
\end{equation*}
Setting $y = |D|^{-\frac{1}{2}}v$, note that $\partial_x \cdot |D|^{-\frac{1}{2}} = H |D|^{\frac{1}{2}}$, then the linearized equation becomes
\begin{equation}
(\partial_t - \partial_x^3)y + \frac{3}{4}H |D|^{\frac{1}{2}}(\phi^2 |D|^{\frac{1}{2}}y) + \frac{3}{4}H |D|^{\frac{1}{2}}(|D|^{\frac{1}{2}}y H\phi_x + \phi H |D|^{\frac{1}{2}} y_x)- \frac{3}{4} |D|^{\frac{1}{2}}\partial_x(|D|^{\frac{1}{2}}y \cdot \phi)=0. \label{EqnY}
\end{equation}
Define the energy $E^{[2]}(y) = \|y \|_{L^2}^2$, then for this equation \eqref{EqnY} we need to prove that its $L^2$ norm is almost conserved, namely,
\begin{equation}
    E^{[2]}(y(t))\approx E^{[2]}(y(0)), \quad t\ll \epsilon^{-12}. \label{AlmostY}
\end{equation}
We have
\begin{equation*}
\partial_t E^{[2]}(y) = \frac{3}{2}\int H |D|^{\frac{1}{2}} y\cdot \phi^2 |D|^{\frac{1}{2}}y  +  H |D|^{\frac{1}{2}}y(|D|^{\frac{1}{2}}y H\phi_x +\phi H |D|^{\frac{1}{2}} y_x) +  H |D|^{\frac{1}{2}}y H\partial_x(|D|^{\frac{1}{2}}y\cdot \phi)dx.
\end{equation*}
Now we consider the high-low interaction of each term of $\partial_t E^{[2]}(y)$, namely, we divide each term into $\lesssim t^{-\frac{1}{3}}$ and $\gg t^{-\frac{1}{3}}$ frequency pieces.
\par First, for $H |D|^{\frac{1}{2}} y\cdot \phi^2 |D|^{\frac{1}{2}}y$ term, we have 
\begin{enumerate}
    \item Three low frequencies:
    \begin{equation*}
    \left|\int H |D|^{\frac{1}{2}} y_{\lesssim t^{-\frac{1}{3}}}\cdot (\phi^2)_{\lesssim t^{-\frac{1}{3}}} |D|^{\frac{1}{2}}y_{\lesssim t^{-\frac{1}{3}}} dx\right|\lesssim t^{-\frac{1}{3}}\| \phi\|^2_{L^\infty}\|y\|^2_{L^2} \lesssim M^2\epsilon^2 t^{-\frac{5}{6}}\|y\|^2_{L^2}.
\end{equation*}
Here we use the Cauchy-Schwarz inequality together with above bootstrap assumption.
    \item Low frequency on $\phi$. 
     \begin{equation*}
   \int H |D|^{\frac{1}{2}} y_{\lesssim t^{-\frac{1}{3}}}\cdot (\phi^2)_{\gg t^{-\frac{1}{3}}} |D|^{\frac{1}{2}}y_{\lesssim t^{-\frac{1}{3}}} dx = 0,
\end{equation*}
as $H$ is skew-adjoint and we can commute it across $\phi^2$.
\item  Low frequency on either $y$ factor. 
Here we compute using some form of a fractional Leibniz rule distributing the first $|D|^{\frac{1}{2}}$ to the two other factors:
\begin{equation*}
    \left|\int H |D|^{\frac{1}{2}} y_{\gg t^{-\frac{1}{3}}}\cdot (\phi^2)_{\gg t^{-\frac{1}{3}}} |D|^{\frac{1}{2}}y_{\lesssim t^{-\frac{1}{3}}} dx\right|\lesssim t^{-\frac{1}{6}}\| |D|^{\frac{1}{2}}(\phi^2)_{\gg t^{-\frac{1}{3}}}\|_{L^\infty}\|y\|^2_{L^2} \lesssim M^2\epsilon^2 t^{-\frac{11}{12}}\|y\|^2_{L^2}.
\end{equation*}
Here for the $|D|^{\frac{1}{2}}(\phi^2)$ term we use the interpolation of $\phi$ and $\phi_x$ bound.
\end{enumerate}
\par Second, for $H |D|^{\frac{1}{2}}y \cdot |D|^{\frac{1}{2}}y \cdot H\phi_x$ term, we have
\begin{enumerate}
    \item Three low frequencies:
    \begin{equation*}
    \left|\int H |D|^{\frac{1}{2}} y_{\lesssim t^{-\frac{1}{3}}}\cdot (H\phi_x)_{\lesssim t^{-\frac{1}{3}}} |D|^{\frac{1}{2}}y_{\lesssim t^{-\frac{1}{3}}} dx\right|\lesssim t^{-\frac{1}{3}}\| H\phi_x\|_{L^\infty}\|y\|^2_{L^2} \lesssim M\epsilon t^{-\frac{13}{12}}\|y\|^2_{L^2}.
\end{equation*}
Here we use the Cauchy-Schwarz inequality together with bootstrap assumption \eqref{BootstrapThree}.
    \item Low frequency on $\phi$ vanishes as $H$ is skew-adjoint and we can commute it across $H\phi_x$.
\item  Low frequency on either $y$ factor.
Here we compute using some form of a fractional Leibniz rule distributing the first $|D|^{\frac{1}{2}}$ to the two other factors:
\begin{equation*}
    \left|\int H |D|^{\frac{1}{2}} y_{\gg t^{-\frac{1}{3}}}\cdot (H\phi_x)_{\gg t^{-\frac{1}{3}}} |D|^{\frac{1}{2}}y_{\lesssim t^{-\frac{1}{3}}} dx\right|\lesssim t^{-\frac{1}{6}}\| |D|^{\frac{1}{2}}(H\phi_x)_{\gg t^{-\frac{1}{3}}}\|_{L^\infty}\|y\|^2_{L^2} \lesssim M\epsilon t^{-\frac{7}{6}}\|y\|^2_{L^2}.
\end{equation*}
Here for the $|D|^{\frac{1}{2}}(H\phi_x)$ term we use the interpolation of $\phi_{xx}$ and $\phi_x$ bound.
\end{enumerate}

\par Next, for $H|D|^{\frac{1}{2}}y\cdot H|D|^{\frac{1}{2}}y_x\cdot \phi$ term, using the integration by parts, it is equivalent to consider $(H|D|^{\frac{1}{2}}y)^2 \phi_x$, there are two cases:
\begin{enumerate}
    \item Two terms on $y$ both have low frequencies,
    \begin{equation*}
    \left|\int (H |D|^{\frac{1}{2}} y_{\lesssim t^{-\frac{1}{3}}})^2\cdot (\phi_x)_{\lesssim t^{-\frac{1}{3}}}  dx\right|\lesssim t^{-\frac{1}{3}}\| H\phi_x\|_{L^\infty}\|y\|^2_{L^2} \lesssim M\epsilon t^{-\frac{13}{12}}\|y\|^2_{L^2}.
    \end{equation*}
    \item One $y$ has high frequency and the other has low frequency,
      \begin{equation*}
    \left|\int H |D|^{\frac{1}{2}} y_{\lesssim t^{-\frac{1}{3}}}  H |D|^{\frac{1}{2}} y_{\gg t^{-\frac{1}{3}}}\cdot (\phi_x)_{\lesssim t^{-\frac{1}{3}}}  dx\right|\lesssim t^{-\frac{1}{6}}\| H|D|^{\frac{1}{2}}\phi_x\|_{L^\infty}\|y\|^2_{L^2} \lesssim M\epsilon t^{-\frac{7}{6}}\|y\|^2_{L^2}.
    \end{equation*}
\end{enumerate}
\par Last, the term $H|D|^{\frac{1}{2}}y \cdot H\partial_x(|D|^{\frac{1}{2}}y\cdot \phi)$ is equivalent to $|D|^{\frac{1}{2}}y \cdot \partial_x(|D|^{\frac{1}{2}}y\cdot \phi)$ in the integral, and it is similar to above cases.

\par To summarize, we have showed that we only have to consider all the high frequency terms in $\partial_t E^{[2]}(y)$, and the remaining terms are just $O(M\epsilon t^{-\frac{11}{12}}\|y\|_{L^2}^2)$.
The expression is too large to be estimated directly in terms of $\|y\|_{L^2}^2$ energy.
We instead use a  normal form energy correction to eliminate it (\cite{MR3348783}).
Precisely, we will seek to eliminate (the bulk of) this expression by adding a cubic correction to the quadratic energy functional, at the expense of producing further quartic errors; these quartic errors will be bounded.
Here we seek the normal form of the type $\tilde{y} = y+B(y,\phi)$ to eliminate the quadratic terms in \eqref{EqnY}, the computation shows that
\begin{equation*}
   \tilde{y} = y+\frac{1}{4}|D|^{-\frac{1}{2}}(H|D|^{\frac{1}{2}}y\cdot \partial_x^{-1}\phi -H|D|^{-\frac{1}{2}}y\cdot H\phi) +\frac{1}{4} |D|^{\frac{1}{2}}(H|D|^{-\frac{1}{2}}y\partial_x^{-1}\phi).
\end{equation*}
Hence, formally, the correction of the energy would be
\begin{equation*}
    E^{(3)}= \frac{1}{8}\int |D|^{-\frac{1}{2}}y(H|D|^{\frac{1}{2}}y\cdot \partial_x^{-1}\phi -H|D|^{-\frac{1}{2}}y\cdot H\phi) + |D|^{\frac{1}{2}}y \cdot H|D|^{-\frac{1}{2}}y\cdot\partial_x^{-1}\phi dx.
\end{equation*}
Then we define the modified energy as
\begin{equation*}
    E := E^{[2]}+E^{[3]},
\end{equation*}
where $y^{hi}: = y_{\gg t^{-\frac{1}{3}}}$, $\phi^{hi}: = \phi_{\gg t^{-\frac{1}{3}}}$ and
\begin{equation*}
     E^{[3]}= \frac{1}{8}\int |D|^{-\frac{1}{2}}y^{hi}(H|D|^{\frac{1}{2}}y^{hi}\cdot \partial_x^{-1}\phi^{hi} -H|D|^{-\frac{1}{2}}y^{hi}\cdot H\phi^{hi}) + |D|^{\frac{1}{2}}y^{hi} \cdot H|D|^{-\frac{1}{2}}y^{hi}\cdot\partial_x^{-1}\phi^{hi} dx.
\end{equation*}
We need to prove the norm equivalence,
\begin{equation*}
    E(y) \approx \|y\|^2_{L^2},
\end{equation*}
and the slow growth
\begin{equation}
    \partial_t E(y) \lesssim M\epsilon t^{-\frac{11}{12}}\|y\|^2_{L^2}. \label{SlowGrowth}
\end{equation}
For the first bound, 
\begin{equation*}
    |E^{[3]}(y)|\lesssim \| \partial_x^{-1}\phi\|_{L^\infty}\|y\|_{L^2}^2 + t^{\frac{1}{3}}\| \phi\|_{L^\infty}\|y\|_{L^2}^2 \lesssim \epsilon t^{\frac{1}{12}}\|y\|_{L^2}^2,
\end{equation*}
which suffices as $t\epsilon^{12}\lesssim 1$.
For the bound of growth, we compute,
\begin{equation*}
  \partial_t E(y) = D_1+D_2+D_3 +O(M\epsilon t^{-\frac{11}{12}}\|y \|_{L^2}^2).  
\end{equation*}
\begin{enumerate}
    \item $D_1$ arises from the scale change in the truncation, as the multiplier $P^{hi}$ is time dependent, with symbol of the form
    \begin{equation*}
        p^{hi}(\xi) := \xi(t^{\frac{1}{3}}\xi).
    \end{equation*}
    Its time derivative has the form
    \begin{equation*}
        \partial_t p^{hi}(\xi) = t^{-1}\frac{t^{\frac{1}{3}}\xi}{3}\xi^{'}(t^{\frac{1}{3}}\xi),
    \end{equation*}
    which is supported exactly in the region $|\xi|\approx t^{-\frac{1}{3}}$ , and we harmlessly abbreviate it as $t^{-1}P_{t^{-\frac{1}{3}}}$.
    Then the corresponding error term is
\begin{equation*}
\begin{aligned}
     D_1 \approx t^{-1}\int |D|^{-\frac{1}{2}}y_{t^{\frac{1}{3}}}\cdot H|D|^{\frac{1}{2}}y^{hi}\cdot \partial_x^{-1}\phi^{hi}+ |D|^{-\frac{1}{2}}y^{hi}\cdot H|D|^{\frac{1}{2}}y^{hi}\cdot \partial_x^{-1}\phi_{t^{\frac{1}{3}}}    \\
     + |D|^{-\frac{1}{2}}y_{t^{\frac{1}{3}}}\cdot H|D|^{-\frac{1}{2}}y^{hi}\cdot H\phi^{hi}  +|D|^{-\frac{1}{2}}y^{hi}\cdot H|D|^{-\frac{1}{2}}y^{hi}\cdot H\phi_{t^{\frac{1}{3}}}  dx.
\end{aligned}
\end{equation*}
    \item $D_2$ is the term arising from $\phi_t$,
    \begin{equation*}
    \begin{aligned}
        D_2 \approx \int |D|^{-\frac{1}{2}}y^{hi}\cdot H|D|^{\frac{1}{2}}y^{hi}\cdot \left(\phi^3+3\phi H\phi_x +3H(\phi \phi_x)\right)^{hi} + \\
        |D|^{-\frac{1}{2}}y^{hi}\cdot H|D|^{-\frac{1}{2}}y^{hi}\cdot H\partial_x\left(\phi^3+3\phi H\phi_x +3H(\phi \phi_x)\right)^{hi} dx.
        \end{aligned}
    \end{equation*}
    \item $D_3$ is the term arising from $y_t$,
     \begin{equation*}
    \begin{aligned}
  D_3 \approx \int\left(\partial_x (|D|^{\frac{1}{2}}y\cdot \phi)-H(\phi^2|D|^{\frac{1}{2}}y)-H(|D|^{\frac{1}{2}}y H\phi_x +\phi H|D|^{\frac{1}{2}}y_x)\right)^{hi}\cdot H|D|^{\frac{1}{2}}y^{hi}\cdot \partial_x^{-1}\phi^{hi}\\
  +\left(|D|(\phi^2|D|^{\frac{1}{2}}y)+|D|(|D|^{\frac{1}{2}}yH\phi_x +\phi H|D|^{\frac{1}{2}}y_x)-\partial_x^2(|D|^{\frac{1}{2}}y\cdot \phi)\right)^{hi}\cdot |D|^{-\frac{1}{2}} y^{hi}\cdot \partial_x^{-1}\phi^{hi}\\
  +\left(\partial_x (|D|^{\frac{1}{2}}y\cdot \phi)-H(\phi^2|D|^{\frac{1}{2}}y)-H(|D|^{\frac{1}{2}}yH\phi_x +\phi H|D|^{\frac{1}{2}}y_x)\right)^{hi}\cdot H|D|^{-\frac{1}{2}}y^{hi}\cdot H\phi^{hi} dx.
     \end{aligned}
    \end{equation*}
\end{enumerate}
For $D_1$, we use the some form of a fractional Leibniz rule to  distribute the multiplier $|D|^{\frac{1}{2}}$ to other factors and the pointwise bound
\begin{equation*}
    |\partial_x^{-1}\phi^{hi}| + |\partial_x^{-1}\phi_{t^{\frac{1}{3}}}|\lesssim M\epsilon
\end{equation*}
to deduce 
\begin{equation*}
    |D_1|\lesssim M\epsilon t^{-\frac{11}{12}}\|y\|_{L^2}^2.
\end{equation*}
For $D_2$ we use the bootstrap assumption for $\phi$ and $\phi_x$ to deduce that
\begin{equation*}
    |D_2|\lesssim M^2\epsilon^2 t^{-1}\|y\|_{L^2}^2,
\end{equation*}
which again suffices.\\
Finally for $D_3$ we  redistribute the Littlewood-Paley operator $P^{hi}$ to other non-$y_t$ terms.
We then distribute the multiplier $|D|$, $|D|^{\frac{1}{2}}$ to each of the other factors using the fractional Leibniz rule.
Using the bootstrap bounds, 
\begin{equation*}
    |D_3|\lesssim M^2\epsilon^2 t^{-1}\|y\|_{L^2}^2.
\end{equation*}
Thus the \eqref{SlowGrowth} is proved.
Applying Gronwall’s inequality for the modified energy functional $E(y)$, we obtain \eqref{AlmostY} .

\section{Nonlinear vector field Sobolev  estimates} \label{s:nonlinear vf}
In this section we prove Proposition \ref{t:vfBound}, and conclude the proof of decay estimates.
For the result stating below,  it will be convenient to rescale the function so that we can eliminate the $t$ in $L^{NL}$. 
For the equation
\begin{equation*}
    x\phi + 3t\phi_{xx} - \frac{3t}{4}\phi^3 - \frac{3t}{4}[\phi H\phi_x +H(\phi\phi_x)] = f, \quad f: = L^{NL}\phi,
\end{equation*}
we define the rescaled function
\begin{equation*}
    \tilde{\phi}(x) := t^{\frac{1}{3}}\phi(t, xt^{\frac{1}{3}}),\quad \tilde{f}(x) := f(t, xt^{\frac{1}{3}})
\end{equation*}
solves the equation when $t=1$,
\begin{equation*}
    x\tilde{\phi} + 3\tilde{\phi}_{xx} - \frac{3}{4}\tilde{\phi}^3 - \frac{3}{4}[\tilde{\phi} H\tilde{\phi}_x +H(\tilde{\phi}\tilde{\phi}_x)] = \tilde{f}.
\end{equation*}
Also the energy bound for $\phi$ becomes
\begin{equation}
    \| \tilde{\phi}\|_{B^{-\frac{1}{2}+\delta}_{2, \infty}} \lesssim \tilde{\epsilon}: =\epsilon t^{\frac{\delta}{3}}\ll 1 . \label{BesovBound}
\end{equation}
Let $\left\langle x \right \rangle : = \left\langle x \right \rangle_1$ be the usual Japanese bracket, $R$ be any dyadic number and  the region $A_R : = \{ \left\langle x \right \rangle\approx R \gtrsim 1\}$, and define $\chi_R$ to be a positive weight function such that that $\chi_R \approx 1$ on $A_R$, decays  to zero away from $A_R$ and $|\chi^{'}_R|\leq R^{-\frac{1}{4}}$.

 \begin{lemma}
 With the rescaled bootstrap assumptions
 \begin{equation}
     |\tilde{\phi}(x)|\leq M\tilde\epsilon \left \langle x \right \rangle^{-\frac{1}{4}+\delta}, \quad |\tilde{\phi}_x(x)|\leq M\tilde\epsilon \left \langle x \right \rangle^{\frac{1}{4}+\delta}, \label{BootstrapFive}
 \end{equation}
the uniform bound for $L^{NL}\tilde{\phi}$
\begin{equation}
    \|L^{NL} \tilde{\phi}\|_{\dot{H}^{\frac{1}{2}}}\lesssim \epsilon, \label{LNLBound}
\end{equation} 
as well as the Besov energy bound \eqref{BesovBound}, we have that 
\begin{equation}
    \|\tilde{\phi}\|_{L^2(A_R)}\lesssim \tilde{\epsilon} R^{\frac{1}{4}-\frac{\delta}{2}}, \quad 
    \|\tilde{\phi}_x\|_{L^2(A_R)}\lesssim \tilde{\epsilon} R^{\frac{3}{4}-\frac{\delta}{2}}, \quad
    \|\tilde{\phi}_{xx}\|_{L^2(A_R)}\lesssim \tilde{\epsilon} R^{\frac{5}{4}-\frac{\delta}{2}}. \label{L2ARBound}
\end{equation}
\end{lemma}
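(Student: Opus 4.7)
The strategy is to exploit the rescaled identity
\begin{equation*}
3\tilde{\phi}_{xx} + x\tilde{\phi} = \tilde{f} + N(\tilde{\phi}),\qquad N(\tilde{\phi}) := \tfrac{3}{4}\tilde{\phi}^3 + \tfrac{3}{4}\bigl[\tilde{\phi}\,H\tilde{\phi}_x + H(\tilde{\phi}\tilde{\phi}_x)\bigr],
\end{equation*}
together with the Besov energy bound \eqref{BesovBound}, the nonlinear vector field bound \eqref{LNLBound}, and the bootstrap pointwise bounds \eqref{BootstrapFive}. The organizing observation is that the natural Airy frequency for the region $A_R$ is $2^m\sim R^{1/2}$: below this scale the Besov bound controls the $L^2$ mass directly, while above it the equation becomes elliptic and inverts with gain $2^{-2k}$. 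Accordingly, I split $\tilde{\phi} = \tilde{\phi}^{lo} + \tilde{\phi}^{hi}$ with $\tilde{\phi}^{lo} = P_{\leq m}\tilde{\phi}$.

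For the low-frequency piece, the Besov bound yields $\|P_k\tilde{\phi}\|_{L^2}\lesssim \tilde{\epsilon}\, 2^{k(1/2-\delta)}$, and summation in $k\leq m$ (dominated by the top end since $1/2-\delta>0$) gives the global estimate $\|\tilde{\phi}^{lo}\|_{L^2(\mathbb{R})} \lesssim \tilde{\epsilon}\, R^{1/4-\delta/2}$, which in particular controls $\|\tilde{\phi}^{lo}\|_{L^2(A_R)}$; Bernstein's inequality inserts an extra factor $2^k$ per derivative, yielding the matching bounds for $\tilde{\phi}^{lo}_x$ and $\tilde{\phi}^{lo}_{xx}$.

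For the high-frequency piece, applying $P_k$ to the equation for each $2^k>2^m$ gives
\begin{equation*}
3\partial_x^2 P_k\tilde{\phi} = P_k\tilde{f} + P_k N(\tilde{\phi}) - x P_k\tilde{\phi} - [P_k, x]\tilde{\phi},
\end{equation*}
with left-hand side of size $\sim 2^{2k}\|P_k\tilde{\phi}\|_{L^2}$. Using $\|P_k\tilde{f}\|_{L^2}\lesssim \epsilon\, 2^{-k/2}$ from \eqref{LNLBound}, the commutator bound $\|[P_k,x]\|_{L^2\to L^2}\lesssim 2^{-k}$ (which reduces $[P_k,x]\tilde{\phi}$ to lower-band contributions already controlled in the Besov step), and \eqref{BootstrapFive} to estimate the cubic $P_k N(\tilde{\phi})$, summation over $2^k > R^{1/2}$ gives $\|\tilde{\phi}^{hi}\|_{L^2}\lesssim \epsilon\, R^{-5/4}$, well inside the stated range. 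With the $\tilde{\phi}$ bound on $A_R$ in hand, the bounds on $\tilde{\phi}_{xx}$ and $\tilde{\phi}_x$ are read off the rearranged equation $3\tilde{\phi}_{xx} = -x\tilde{\phi} + \tilde{f} + N(\tilde{\phi})$ localized by $\chi_R$: the dominant term $\|x\tilde{\phi}\|_{L^2(A_R)}\lesssim R\cdot\tilde{\epsilon}\, R^{1/4-\delta/2}$ matches $\tilde{\epsilon}\, R^{5/4-\delta/2}$, $\tilde{f}$ on $A_R$ is controlled at the Airy scale, and the cubic remainder is absorbed using \eqref{BootstrapFive}; the intermediate $\tilde{\phi}_x$ bound follows by Gagliardo--Nirenberg interpolation.

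The main technical obstacle is in the elliptic inversion: multiplication by $x$ does not respect the Littlewood--Paley decomposition, so $xP_k\tilde{\phi}$ and $[P_k,x]\tilde{\phi}$ mix frequencies and are not individually small. The resolution is to chain back to the Besov bound at lower frequencies and exploit $|x|\lesssim R$ on $A_R$; the chain closes because the Besov growth rate $2^{k(1/2-\delta)}$ is geometric while each elliptic inversion gains $2^{-2k}$, producing net decay above the Airy scale.
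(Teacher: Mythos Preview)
Your low–frequency argument is correct and matches the paper: both use the Besov bound to control $\|P_k\tilde\phi\|_{L^2}\lesssim\tilde\epsilon\,2^{k(1/2-\delta)}$ and sum up to the Airy scale $2^m\sim R^{1/2}$.

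The high–frequency argument, however, has a genuine gap. Your inversion step asserts that the left side of
\[
3\partial_x^2 P_k\tilde\phi = P_k\tilde f + P_kN(\tilde\phi) - xP_k\tilde\phi - [P_k,x]\tilde\phi
\]
has size $\sim 2^{2k}\|P_k\tilde\phi\|_{L^2}$ and can be inverted with a clean $2^{-2k}$ gain. Globally this fails: $x$ is unbounded, and in fact $L=x+3\partial_x^2$ is \emph{not} elliptic at frequency $2^k$ --- it degenerates near $x\approx 3\cdot 2^{2k}$ in the hyperbolic region, so the quoted bound $\|\tilde\phi^{hi}\|_{L^2}\lesssim\epsilon R^{-5/4}$ cannot be obtained this way. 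Locally on $A_R$ you may use $|x|\lesssim R<2^{2k}$, but then you need the \emph{lower} bound $\|\partial_x^2 P_k\tilde\phi\|_{L^2(A_R)}\gtrsim 2^{2k}\|P_k\tilde\phi\|_{L^2(A_R)}$, and spatial localization does not commute with the frequency projection, so this is not automatic. Your final paragraph names the obstacle correctly but ``chain back to the Besov bound'' is not a proof.

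The paper supplies the missing mechanism: rather than inverting $3\partial_x^2$ directly, it bounds $\|L\tilde\phi_\lambda\|_{L^2(A_R)}$, then uses the integration-by-parts identity
\[
\int \chi_R|\tilde\phi_{\lambda,x}|^2\,dx=\int\tfrac13\chi_R\tilde\phi_\lambda\,L\tilde\phi_\lambda+(\tfrac12\chi_R''-\tfrac13 x\chi_R)|\tilde\phi_\lambda|^2\,dx
\]
to control $\|\tilde\phi_{\lambda,x}\|_{L^2(A_R)}$ in terms of $\|\tilde\phi_\lambda\|_{L^2(A_R)}$ and $\|L\tilde\phi_\lambda\|_{L^2(A_R)}$, and then recovers $\|\tilde\phi_\lambda\|_{L^2(A_R)}$ from $\|\tilde\phi_{\lambda,x}\|_{L^2(A_R)}$ via a frequency-localized antiderivative $\partial_{x,\lambda}^{-1}$ together with the commutator bound for $[\partial_{x,\lambda}^{-1},\chi_R]$. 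Iterating these two steps is what produces the $\lambda$-decay you need for summation; a single elliptic step does not close. Your sketch of the $\tilde\phi_{xx}$ and $\tilde\phi_x$ bounds from the equation on $A_R$ is fine once $\|\tilde\phi\|_{L^2(A_R)}$ is in hand.
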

\begin{proof}
The low frequency bound follows directly from above Besov energy bound \eqref{BesovBound} for $\tilde{\phi}$, it suffices to consider high frequencies $\lambda \geq R^{\frac{1}{2}}$.
We first consider the bound for $L\tilde{\phi}_\lambda$, we have
\begin{equation*}
    L\tilde{\phi}_\lambda = P_\lambda L^{NL}\tilde{\phi} + \lambda^{-1}\tilde{\phi}_{\lambda} + P_\lambda\left(\tilde{\phi}^3 + \tilde{\phi}H\tilde{\phi}_x + H(\tilde{\phi} \tilde{\phi_x})\right).
\end{equation*}
Using the Besov energy bound \eqref{BesovBound} and the uniform energy bound \eqref{LNLBound} we have 
\begin{equation*}
    \|P_\lambda L^{NL}\tilde{\phi} \|_{L^2}\lesssim \epsilon\lambda^{-\frac{1}{2}}, \quad \| \tilde{\phi}_\lambda\|_{L^2}\lesssim \tilde{\epsilon}\lambda^{\frac{1}{2}-\delta}.
\end{equation*}
For the third part, we apply the pointwise bootstrap assumptions \eqref{BootstrapFive} on $\tilde{\phi}$ and $\tilde{\phi}_x$,
\begin{equation*}
    \|P_\lambda(\tilde{\phi}^3) \|_{L^2(A_R)}\leq M^3\tilde{\epsilon}^3R^{-\frac{1}{4}}, \quad \| P_\lambda(\tilde{\phi}H\tilde{\phi}_x + H(\tilde{\phi} \tilde{\phi_x}))\|_{L^2(A_R)} \leq M^2\tilde{\epsilon}^2 R^{\frac{1}{2}}.
\end{equation*}
%\blue{ 
%As long as we can show that $\|H \tilde{\phi}_x\|_{L^2(A_R)} \lesssim \| %\tilde{\phi}_x\|_{L^2(A_R)}$, then this is done. \\
% It was shown in \cite{MR2354322} ($A_p$ weight boundedness) that
%\begin{equation*}
%    \|Hf\|_{L^p(\omega)}\lesssim_{\omega} \|f\|_{L^p(\omega)}, \quad 1<p<\infty, \quad \omega\in A_p
%\end{equation*}
%For the second inequality, we have
%\begin{equation*}
 % \|\tilde{\phi}H\tilde{\phi}_x \|_{L^2(A_R)}\lesssim \|\tilde{\phi} \|_{L^\infty(A_R)}\|H \tilde{\phi}_x\chi_R\|_{L^2} \lesssim  \|\tilde{\phi} \|_{L^\infty(A_R)}\| \tilde{\phi}_x\chi_R\|_{L^2} = \|\tilde{\phi} \|_{L^\infty(A_R)}\| \tilde{\phi}_x\|_{L^2(A_R)}.
%\end{equation*}
%Then we can use \eqref{BootstrapFive}, $\|\tilde{\phi} \|_{L^\infty(A_R)} \approx M\epsilon R^{-\frac{1}{4}-\frac{\delta}{2}}$, and $\| \tilde{\phi}_x\|_{L^2(A_R)} \leq \| \tilde{\phi}_x\|_{L^\infty(A_R)} R^{\frac{1}{2}} = M\epsilon R^{\frac{3}{4}-\frac{\delta}{2}}$($R$ is dyadic). 
%Therefore, 
%\begin{equation*}
%    \|\tilde{\phi}H\tilde{\phi}_x \|_{L^2(A_R)} \lesssim M^2\epsilon^2 R^{\frac{1}{2}-\delta}\lesssim M^2\epsilon^2 R^{\frac{1}{2}}.
%\end{equation*}
%Similarly, $\| H(\tilde{\phi} \tilde{\phi_x})\|_{L^2(A_R)} \leq M^2\epsilon^2 R^{\frac{1}{2}}.$

Therefore we have 
\begin{equation*}
    \| L\tilde{\phi}_\lambda \|_{L^2(A_R)}\lesssim \tilde{\epsilon} R^{\frac{1}{2}}.
\end{equation*}
Also we see that \eqref{BesovBound} gives
\begin{equation*}
    \| \tilde{\phi}_\lambda \|_{L^2(A_R)}\lesssim \tilde{\epsilon} \lambda^{\frac{1}{2}-\delta}.
\end{equation*}
To obtain the bound on the derivative, we integrate by parts in $A_R$ to get
\begin{equation*}
    \int \chi_R |\tilde{\phi}_{\lambda,x}|^2 dx = \int \frac{1}{3}\chi_R \tilde{\phi}_\lambda L\tilde{\phi}_\lambda + (\frac{1}{2}\chi_R^{''}- \frac{1}{3}x\chi_R)|\tilde{\phi}_\lambda|^2 dx.
\end{equation*}
By H{\"o}lder's inequality this yields the bound
\begin{equation*}
    \|\tilde{\phi}_{\lambda, x} \|^2_{L^2(A_R)}\lesssim \tilde{\epsilon}^2 R\lambda^{\frac{1}{2}-\delta}.
\end{equation*}
Now if we consider the antiderivative localized at frequency $\lambda$, and harmlessly denote this operator by $\partial^{-1}_{x,\lambda}$, then we can express $\tilde{\lambda}_{x}$ in terms of $\tilde{\phi}_{\lambda,x}$,
\begin{equation*}
    \chi_R\tilde{\phi}_\lambda = \chi\partial^{-1}_{x,\lambda}\tilde{\phi}_{\lambda,x} =\partial^{-1}_{x,\lambda}(\chi_R\tilde{\phi}_{\lambda,x}) - [\partial^{-1}_{x,\lambda}, \chi_R]\tilde{\phi}_{\lambda,x}.
\end{equation*}
By the commutator estimates, we have the local bound for $\tilde{\phi}_\lambda$,
\begin{equation*}
    \| \tilde{\phi}_\lambda\|_{L^2(A_R)} \lesssim \tilde{\epsilon} R^{\frac{1}{2}}\lambda^{-\frac{1}{2}-\delta}.
\end{equation*}
Repeating the argument above we obtain the refined bounds
\begin{equation*}
  \|\tilde{\phi}_{\lambda,x}\|_{L^2(A_R)} \lesssim \tilde{\epsilon} R\lambda^{-\frac{1}{2}-\delta}, \quad \| \tilde{\phi}_\lambda\|_{L^2(A_R)} \lesssim \tilde{\epsilon} R\lambda^{-\frac{3}{2}-\delta},
\end{equation*}
and furthermore, 
\begin{equation*}
     \|\tilde{\phi}_{\lambda,xx}\|_{L^2(A_R)} \lesssim \tilde{\epsilon} R\lambda^{\frac{1}{2}-\delta},
\end{equation*}
which completes the proof.
\end{proof}

\begin{lemma}
With the bootstrap assumptions \eqref{BootstrapFive}, the uniform bound for $L^{NL}\tilde{\phi}$ \eqref{LNLBound} as well as the Besov energy bound \eqref{BesovBound}, we have
\begin{equation*}
    \|L^{NL}\tilde{\phi} \|_{L^2(A_R)}\lesssim \tilde{\epsilon} R^{\frac{1}{2}+\delta}.
\end{equation*}
\end{lemma}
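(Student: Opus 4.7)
The plan is to combine the critical Sobolev embedding $\dot{H}^{1/2}(\mathbb{R}) \hookrightarrow \mathrm{BMO}(\mathbb{R})$ with the $L^2(A_R)$ bounds for $\tilde\phi$ and $\tilde\phi_{xx}$ established in the previous lemma. The embedding together with hypothesis \eqref{LNLBound} yields $\|L^{NL}\tilde\phi\|_{\mathrm{BMO}} \lesssim \epsilon \lesssim \tilde\epsilon$, and this is the natural substitute for a global $L^2$ bound (which cannot hold): a termwise estimation of $L^{NL}\tilde\phi = x\tilde\phi + 3\tilde\phi_{xx} + (\text{cubic})$ would only produce an $L^2(A_R)$ size of $\tilde\epsilon R^{5/4-\delta/2}$ from each of the two dominant linear terms, while the required cancellation between $x\tilde\phi$ and $3\tilde\phi_{xx}$ is precisely what the $\dot{H}^{1/2}$ norm encodes.

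Given the BMO bound, I would split
\[
\|L^{NL}\tilde\phi\|_{L^2(A_R)} \leq \|L^{NL}\tilde\phi - c_R\|_{L^2(A_R)} + |A_R|^{1/2}|c_R|,
\]
where $c_R$ denotes the mean of $L^{NL}\tilde\phi$ over $A_R$. The $L^2$ form of the John-Nirenberg inequality controls the oscillatory part by $R^{1/2}\|L^{NL}\tilde\phi\|_{\mathrm{BMO}} \lesssim R^{1/2}\tilde\epsilon$. For the mean value, the classical BMO growth estimate for concentric balls, transferred to annular averages via the identity $f_{A_R} = 2f_{B_R} - f_{B_{R/2}}$, gives $|c_R - c_1| \lesssim (\log R)\|L^{NL}\tilde\phi\|_{\mathrm{BMO}} \lesssim \tilde\epsilon \log R$, reducing matters to the base case $|c_1|$.

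The base case is handled directly: by Cauchy-Schwarz, $|c_1| \leq \|L^{NL}\tilde\phi\|_{L^2(A_1)}$, and on the bounded region $A_1$ the explicit expression for $L^{NL}\tilde\phi$ is estimated term by term. The linear pieces $\|x\tilde\phi\|_{L^2(A_1)}$ and $\|\tilde\phi_{xx}\|_{L^2(A_1)}$ are both of size $\tilde\epsilon$ by the previous lemma applied with $R=1$, while the cubic contributions $\tilde\phi^3$, $\tilde\phi H\tilde\phi_x$, $H(\tilde\phi\tilde\phi_x)$ are readily controlled using the bootstrap pointwise bounds on $\tilde\phi, \tilde\phi_x$ together with the pointwise bound for $H\tilde\phi$ derived earlier in the paper. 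Combining the three contributions, I obtain $\|L^{NL}\tilde\phi\|_{L^2(A_R)} \lesssim \tilde\epsilon R^{1/2}(1+\log R) \lesssim \tilde\epsilon R^{1/2+\delta}$, absorbing the logarithm into the extra factor $R^\delta$. The main technical subtlety is the BMO growth statement adapted to annular (rather than ball) averages, which is a routine but careful adaptation of the classical concentric-ball version.
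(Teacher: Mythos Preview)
Your argument is correct and genuinely different from the paper's. The paper proceeds by a Littlewood--Paley decomposition of $L^{NL}\tilde\phi$ at the frequency scale $R^{-1}$: the high-frequency piece $P_{\gtrsim R^{-1}}L^{NL}\tilde\phi$ is controlled in global $L^2$ directly from the $\dot H^{1/2}$ bound (summing $\epsilon\lambda^{-1/2}$ over dyadic $\lambda\gtrsim R^{-1}$ gives $\epsilon R^{1/2}$), while the low-frequency piece is split as $P_{<R^{-1}}L\tilde\phi + P_{<R^{-1}}(\text{cubic})$; the linear part is handled by the commutator identity $P_{<R^{-1}}L\tilde\phi = L\tilde\phi_{<R^{-1}} + [P_{<R^{-1}},x]\tilde\phi$ together with the Besov bound, and the cubic part by the $L^2(A_R)$ bounds of the preceding lemma. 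Your approach instead stays in physical space and exploits the critical embedding $\dot H^{1/2}\hookrightarrow\mathrm{BMO}$, yielding the bound with a logarithm that is then absorbed into $R^\delta$. The paper's route fits more seamlessly with the Littlewood--Paley machinery used throughout and gives $R^{1/2+\delta}$ with the $\delta$ arising directly from the Besov index, whereas yours is a cleaner real-variable argument whose $\log R$ loss is the natural signature of the critical embedding. One small point: in the base case the term $H(\tilde\phi\tilde\phi_x)$ is nonlocal, so the bootstrap pointwise bounds alone do not immediately bound it on $A_1$; you should note that writing $\tilde\phi\tilde\phi_x=\tfrac12\partial_x(\tilde\phi^2)$ and using the near/far splitting (exactly as in the paper's derivation of the pointwise bound for $H\phi$) closes this step after one integration by parts in the far piece.
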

\begin{proof}
We consider the frequency cutoff $R^{-1}$, then $L^{NL}\tilde{\phi}$ has the decomposition
\begin{equation*}
    L^{NL}\tilde{\phi} = P_{\gtrsim R^{-1}} L^{NL}\tilde{\phi} + P_{<R^{-1}}L\tilde{\phi} + P_{<R^{-1}}\left(\tilde{\phi}^3 + \tilde{\phi}H\tilde{\phi}_x + H(\phi \tilde{\phi_x})\right).
\end{equation*}
For the first term, we use the uniform $\dot{H}^{\frac{1}{2}}$ bound for $L^{NL}\tilde{\phi}$.
For the second term, we have
\begin{equation*}
P_{<R^{-1}} L\tilde{\phi} = L\tilde{\phi}_{<R^{-1}} + [P_{<R^{-1}}, L]\tilde{\phi} = L\tilde{\phi}_{<R^{-1}} +  [L, P_{\gtrsim R^{-1}}]\tilde{\phi},
\end{equation*}
because $L$ is a linear operator.
By the Besov energy bound \eqref{BesovBound} on $\tilde{\phi}$, 
\begin{equation*}
    \|L\tilde{\phi}_{<R^{-1}}\|_{L^2(A_R)} \lesssim R\| \tilde{\phi}\|_{L^2(A_R)} \lesssim \tilde{\epsilon} R^{\frac{1}{2}+\delta}.
\end{equation*}
The Fourier multiplier of the commutator of size $R$ is supported near $|\xi| \sim R^{-1}$, we have
\begin{equation*}
    \|[L, P_{\gtrsim R^{-1}}]\tilde{\phi}\|_{L^2(A_R)} \lesssim \|R^{-1}\tilde{\phi} \|_{L^2(A_R)} \lesssim \tilde{\epsilon} R^{-\frac{1}{2}-\delta}.
\end{equation*}
For the third term, we use the bound \eqref{L2ARBound}.
\end{proof}

Now we consider the problem in dyadic region $\{ |x|\approx R\}$, where $R \gtrsim 1$.
Setting $v = \chi_R \tilde{\phi}$, and $g: = \chi_R \tilde{f}$, it follows that $v$ solves the equation 
\begin{equation*}
    (x + 3\partial_x^2)v -\frac{3}{4}\tilde{\phi}^2v - \frac{3}{4}[vH\tilde{\phi}_x + H(\tilde{\phi}\tilde{\phi}_x)\chi_R] = g,
\end{equation*}
where $v$ and $f$ satisfy the bounds
\begin{equation}
    \|v\|_{L^2(A_R)}\lesssim \tilde{\epsilon} R^{\frac{1}{4}-\frac{\delta}{2}}, \quad  \|v_x\|_{L^2(A_R)}\lesssim \tilde{\epsilon} R^{\frac{3}{4}-\frac{\delta}{2}}, \quad
    \|v_{xx}\|_{L^2(A_R)}\lesssim \tilde{\epsilon} R^{\frac{5}{4}-\frac{\delta}{2}}, \label{BoundForV}
\end{equation}
respectively
\begin{equation}
    \|g\|_{\dot{H}^{\frac{1}{2}}}\lesssim \epsilon,\quad  \|g \|_{L^2} \lesssim \tilde{\epsilon} R^{\frac{1}{2}+\delta}. \label{FBound}
\end{equation}
As before, we split the frequency at $R^{\frac{1}{2}}$ to high and low frequencies.
\par \textbf{1.Pointwise estimate in the hyperbolic region: $x\approx R \gg 1$.}\\
Here we consider the region $A^H_R$ to the right of the origin, 
and apply hyperbolic energy estimates to establish the desired pointwise bound for $u$ supported in  $A^H_R$.
We use the
frequency scale $R^{\frac{1}{2}}$ to split 
\begin{equation*}
    g = g_{hi} + g_{lo},
\end{equation*}
then we have the energy conservation type relation
\begin{equation}
    \frac{d}{dx}\left(x|v|^2 +3|v_x|^2 - 2f_{lo} v\right) = |v|^2 +2g_{hi}v_x - 2g_{lo,x}v+ \frac{3}{2}\tilde{\phi}^2vv_x + \frac{3}{2}vv_xH\tilde{\phi}_x +\frac{3}{2}H(\tilde{\phi}\tilde{\phi}_x)\chi_Rv_x. \label{EnergyConservation}
\end{equation}
The first nonlinear term on the right hand side can be written in the form
\begin{equation*}
    \frac{3}{2}\tilde{\phi}^2vv_x = \frac{3}{8}\frac{d}{dx}(\tilde{\phi}^4\chi_R^2)+\frac{3}{4}\tilde{\phi}^4\chi_R\chi_R^{'}.
\end{equation*}
Thus the first term on the right hand side of above equality can be added to the left hand side of \eqref{EnergyConservation} as the energy correction,
\begin{equation*}
     \frac{d}{dx}\left(x|v|^2 +3|v_x|^2 - 2g_{lo} v - \frac{3}{8}\tilde{\phi}^4\chi_R^2 \right) = |v|^2 +2g_{hi}v_x - 2g_{lo,x}v+ \frac{3}{4}\tilde{\phi}^4\chi_R\chi_R^{'} + \frac{3}{2}vv_xH\tilde{\phi}_x +\frac{3}{2}H(\tilde{\phi}\tilde{\phi}_x)\chi_Rv_x.
\end{equation*}
Then applying Gronwall's inequality we obtain
\begin{equation*}
    \sup_{x\in A^H_R}\{x|v|^2+ 3|v_x|^2 \} \lesssim \tilde{\epsilon}^2R^{\frac{1}{2}-\delta}+   \sup_{x\in A^H_R}\{g_{lo}v \} + \sup_{x_0\in A^H_R}\int_{-\infty}^{x_0} R^{-1}\tilde{\phi}^4\chi_R + vv_xH\tilde{\phi}_x+ H(\tilde{\phi}\tilde{\phi}_x)\chi_Rv_x dx.
\end{equation*}
Now we need to estimate the right hand side of above inequality.
We first estimate the $L^\infty$ norm of $f_{lo}$.
By the second inequality of \eqref{FBound}, $\| g_{< R^{-1}}\|_{L^2}\lesssim \tilde{\epsilon} R^{\frac{1}{2}+\delta}$. 
Applying Bernstein’s inequality,  
\begin{equation*}
    \| g_{< R^{-1}}\|_{L^\infty}\lesssim  R^{-\frac{1}{2}}\| g_{< R^{-1}}\|_{L^2}\lesssim \tilde{\epsilon}R^{\delta}. 
\end{equation*}
For $R^{-1}\leq\lambda \leq R^{\frac{1}{2}}$, $\lambda$ dyadic, there are $O(\log R)$ number of pieces. 
We use the first inequality of \eqref{FBound}, $\| g_{\lambda}\|_{L^2}\lesssim \epsilon \lambda^{-\frac{1}{2}}$, and again apply the Bernstein’s inequality,
\begin{equation*}
    \| g_{\lambda}\|_{L^\infty}\lesssim  \lambda^{\frac{1}{2}}\| g_{\lambda}\|_{L^2}\lesssim \epsilon.
\end{equation*}
Summing up these frequencies, 
\begin{equation*}
    \| g_{lo}\|_{L^\infty} \lesssim \| g_{<R^{-1}}\|_{L^\infty} + \sum_{\lambda \geq R^{-1}}^{R^{\frac{1}{2}}} \|g_{\lambda}\|_{L^\infty} \lesssim \tilde{\epsilon} R^\delta.
\end{equation*}

\par Next, for the integral term, we use \eqref{BoundForV} the bootstrap assumption \eqref{BootstrapFive} to obtain
\begin{equation*}
    \int_{-\infty}^{x_0} R^{-1}\tilde{\phi}^4\chi_R dx \leq \| \tilde{\phi}\|^4_{L^\infty(A_R)}\int R^{-1}\chi_R dx \lesssim M^4 \tilde{\epsilon}^4 R^{-1+4\delta}.
\end{equation*}

Since $\chi_R v_x = \chi_R(\tilde{\phi}\chi_R)_x = \tilde{\phi}_x\chi_R^2 + \tilde \phi \chi^{'}_R \chi_R$, and $|\chi^{'}_R|\leq R^{-\frac{1}{4}}$,
\begin{equation*}
    \int_{-\infty}^{x_0} vv_xH\tilde{\phi}_x+H(\tilde{\phi}\tilde{\phi}_x)\chi_Rv_x dx = \int_{-\infty}^{x_0}  \tilde{\phi}\tilde{\phi}_x\chi^2_R H\tilde{\phi}_x + \tilde{\phi}_x \chi_R^2 H(\tilde{\phi}\tilde{\phi}_x) dx+  \int_{-\infty}^{x_0} H(\tilde{\phi}\tilde{\phi}_x)\chi_R^{'}v  + \tilde{\phi}_xH(v\tilde{\phi}\chi_R^{'} )dx.
\end{equation*}
For the second term on the right,
\begin{equation*}
     \int_{-\infty}^{x_0} H(\tilde{\phi}\tilde{\phi}_x)\chi_R^{'}v  + \tilde{\phi}_xH(v\tilde{\phi}\chi_R^{'} )dx \lesssim M^3\tilde{\epsilon}^3R^{\frac{1}{2}+\frac{3}{2}\delta}.
\end{equation*}
It suffices to consider the first term of the right hand side.
Note that using the definition of Hilbert transform, this integral can be rewritten as
\begin{equation*}
    PV\frac{1}{\pi}\int_{-\infty}^{x_0}\int^{\infty}_{-\infty} \left(\chi_R^2(x)-\chi_R^2(y)\right)\frac{\tilde{\phi}(x) \tilde{\phi}_x(x)\tilde{\phi}_x(y)}{x-y}dydx.
\end{equation*}
We consider two cases:
\begin{enumerate}
    \item If $|x-y|\lesssim R^{\frac{1}{4}-\delta}$, we apply the pointwise bound \eqref{BootstrapFive} for $\tilde{\phi}(x) \tilde{\phi}_x(x)\tilde{\phi}_x(y)$, and 
    \begin{equation*}
    \left|PV\int_{-\infty}^{x_0}\int^{\infty}_{-\infty} \frac{1}{x-y}\mathbf{1}_{(|x-y|\lesssim R^{\frac{1}{4}})}dydx\right| \leq \int^{\infty}_{-\infty}  \int^{\infty}_{-\infty} \frac{1}{|x-y|}\mathbf{1}_{(|x-y|\lesssim R^{\frac{1}{4}})}dydx.   
    \end{equation*}
    This yields a bound of $M^3\tilde{\epsilon}^3 R^{\frac{1}{2}+2\delta}$.
    \item If $|x-y|> R^{\frac{1}{4}-\delta}$, then we use \eqref{BoundForV} to bound $\chi_R\tilde{\phi}_x(x)$ and $\chi_R\tilde{\phi}_x(y)$ in $L^2$.
    Denoting $x_1 = \min\{x_0, y-t\}$, we integrate by parts to get
    \begin{equation*}
        \int_{-\infty}^{x_1} \frac{1}{x-y}\tilde{\phi}(x)dy = \frac{1}{x_1 -y}\phi(x_1)- \int_{-\infty}^{x_1} \frac{1}{(x -y)^2}\phi(x)dx.
    \end{equation*}
    As $|x_1 -y|>R^{\frac{1}{4}-\delta}$ from \eqref{BootstrapFive} we obtain a bound of  $M^3\tilde{\epsilon}^3 R^{\frac{1}{2}+\delta}$.
\end{enumerate}

Gathering all the terms above, we have the desired bound
\begin{equation*}
    \sup_{x\in A^H_R}\{x|v|^2+ 3|v_x|^2 \} \lesssim \tilde{\epsilon}^2 R^{\frac{1}{2}+2\delta}.
\end{equation*}
\par \textbf{2.Pointwise estimate in the self-similar region: $|x|\lesssim R = 1$.}\\
Here we simply use Sobolev embedding $\dot{H}^{\frac{1}{2}+\epsilon}(\mathbb{R})\subset L^\infty(\mathbb{R})$.
From the interpolation of \eqref{BoundForV}, we know that 
\begin{equation*}
    \| v\|_{L^\infty}\lesssim \| v\|_{\dot{H}^{\frac{1}{2}+\epsilon}(A_R)}\lesssim  \tilde{\epsilon} R^{\frac{1}{2}+\epsilon^{'}} \leq \tilde{\epsilon} R^{-\frac{1}{4}+\delta}.
\end{equation*}
\par \textbf{3.Pointwise estimate in the elliptic region: : $-x\approx R \gg 1$.}\\
In the elliptic region, the leading part of $v$ is $x^{-1}g_{lo}$.
As in the hyperbolic region, we split function $g$ into high and low frequency parts from frequency $R^{\frac{1}{2}}$
Subtracting this leading term, we are left with
\begin{equation*}
    v_1 : = v-x^{-1}g_{lo},
\end{equation*}
which solves
\begin{equation*}
    Lv_1 = g_1 := g_{hi} -3(x^{-1}g_{lo})_{xx}+\frac{3}{4}\tilde{\phi}^2v + \frac{3}{4}[vH\tilde{\phi}_x + H(\tilde{\phi}\tilde{\phi}_x)\chi_R].
\end{equation*}
$g_{hi}$ serves the leading part of $g_1$, while the other terms are perturbative, and by \eqref{FBound}
\begin{equation}
    \|g_1\|_{L^2}\lesssim \epsilon R^{-\frac{1}{4}}.\label{FOneBound}
\end{equation}
Multiplying above equation by $v_1$ and integrating by parts, 
\begin{equation*}
    \int x|v_1|^2dx - 3 \int |v_{1,x}|^2 dx = \int g_1v_1 dx.
\end{equation*}
Cauchy-Schwarz inequality implies
\begin{equation*}
    R\| v_1\|^2_{L^2}+ 3\|v_{1,x}\|^2_{L^2}\lesssim \|g_1\|_{L^2}\|v_1\|_{L^2}\leq \frac{1}{2R}\|g_1\|_{L^2}^2 + \frac{R}{2}\| v_1\|^2_{L^2},
\end{equation*}
which further leads to
\begin{equation*}
   R\| v_1\|^2_{L^2}+ \|v_{1,x}\|^2_{L^2}\lesssim R^{-1}\|g\|^2_{L^2}.
\end{equation*}
Thus, using the bound \eqref{FOneBound}, we arrive at
\begin{equation*}
    \|v_1\|_{L^2}\lesssim \epsilon R^{-\frac{5}{4}}, \quad \|\partial_x v_1\|_{L^2}\lesssim \epsilon R^{-\frac{3}{4}},
\end{equation*}
and further using the $v_1$ equation,
\begin{equation*}
    \|\partial_x^2 v_1\|_{L^2}\lesssim \epsilon R^{-\frac{1}{4}}.
\end{equation*}
Now we can obtain pointwise bounds for $v_1$ by interpolation and Sobolev embeddings 
\begin{equation*}
    |v_1|\lesssim \epsilon R^{-1}, \quad |\partial_x v_1|\lesssim \epsilon R^{-\frac{1}{2}}.
\end{equation*}
This is exactly as needed. 
On the other hand for the $x^{-1}g_{lo}$ we proceed as 
before, using Bernstein’s inequality, in order to obtain a similar bound but with a logarithm loss.

\bibliographystyle{plain}
\bibliography{refs}
\end{document}